\newcommand\EnsSimp{\widehat{\Delta}}
\newcommand\DeuxFoncteurStrict[0]{\deux{}foncteur strict}
\newcommand\DeuxFoncteursStricts[0]{\deux{}foncteurs stricts}
\newcommand\DeuxTransformationLax[0]{transformation}
\newcommand\DeuxTransformationsLax[0]{transformations}
\newcommand\DeuxTransformationOpLax[0]{optransformation}
\newcommand\DeuxTransformationStricte[0]{transformation stricte}
\newcommand\DeuxTransformationsStrictes[0]{transformations strictes}
\newcommand\DeuxFoncteurLax[0]{\deux{}foncteur lax}
\newcommand\DeuxFoncteursLax[0]{\deux{}foncteurs lax}
\newcommand\DeuxFoncteurCoLax[0]{\deux{}foncteur colax}
\newcommand\DeuxFoncteursCoLax[0]{\deux{}foncteurs colax}
\newcommand\TrancheLax[3]{{#1}/_{\negmedspace {\rm{l}}}^{#2}{#3}}
\newcommand\TrancheCoLax[3]{{#1}/_{\negmedspace {\rm{c}}}^{#2}{#3}}
\newcommand\OpTrancheCoLax[3]{{#3}{\backslash}_{\mspace{-.3mu}{\rm{c}}}^{\mspace{-6.mu}#2}{#1}}
\newcommand\OpTrancheLax[3]{{#3}{\backslash}_{\mspace{-.3mu}{\rm{l}}}^{\mspace{-6.mu}#2}{#1}}
\newcommand\DeuxFoncTrancheLax[2]{{#1}/_{\negmedspace {\rm{l}}}{#2}}
\newcommand\DeuxFoncTrancheCoLax[2]{{#1}/_{\negmedspace {\rm{c}}}{#2}}
\newcommand\DeuxFoncOpTrancheLax[2]{{#2}\backslash_{\mspace{-.3mu}{\rm{l}}}{#1}}
\newcommand\DeuxFoncOpTrancheCoLax[2]{{#2}\backslash_{\mspace{-.3mu}{\rm{c}}}{#1}}
\newcommand\DeuxFoncTrancheLaxCoq[3]{{#1}/_{\negmedspace {\rm{l}}}^{#2}{#3}}
\newcommand\Fibre[3]{{#2}^{-1}({#3})}
\newcommand\CompDeuxUn[0]{}
\newcommand\CompDeuxZero[0]{\circ}
\newcommand\DeuxCellStructComp[3]{{#1}_{{#2},{#3}}}
\newcommand\DeuxCellStructId[2]{{#1}_{{#2}}}
\newcommand\Objets[1]{Ob({#1})}
\newcommand\UnCell[1]{Fl({#1})}
\newcommand\TildeLax[1]{\widetilde{#1}}
\newcommand\BarreLax[1]{\overline{#1}}
\newcommand\LaxCanonique[1]{\eta_{{#1}}}
\newcommand\StrictCanonique[1]{\epsilon_{{#1}}}
\newcommand\TransLaxCanonique[0]{\eta}
\newcommand\TransStrictCanonique[0]{\epsilon}
\newcommand\DeuxInt[1]{\int{#1}}
\newcommand\DeuxIntOp[1]{\int_{#1}^{op}}
\newcommand\deux{$2$\nobreakdash-}
\newcommand\un{$1$\nobreakdash-}
\newcommand\CatHom[3]{\underline{Hom}_{#1}(#2, #3)}
\newcommand\EnsHom[3]{{Hom}_{#1}(#2, #3)}
\newcommand\Hot{{\mathcal{H}\mspace{-2.mu}\it{ot}}}
\newcommand\Cat{{\mathcal{C}\mspace{-2.mu}\it{at}}}
\newcommand\DeuxCat{\text{$2$-$\Cat$}}
\newcommand\DeuxCatLax{2\hbox{\protect\nobreakdash-}\kern1pt\Cat_{lax}}
\newcommand\DeuxCatLaxNor{2\hbox{\protect\nobreakdash-}\kern1pt\Cat_{lax,nor}}
\newcommand\DeuxCatDeuxCat{\underline{2Cat}}
\newcommand\UnLocFond[1]{{#1}}
\newcommand\DeuxLocFond[1]{\mathcal{#1}}
\newcommand\DeuxLocFondLaxInduit[1]{\mathcal{#1}_{lax}}
\newcommand\Localisation[2]{{#2}^{-1}{#1}}
\newcommand\NerfLaxNor{N_{l,n}}
\newcommand\NerfLax{N_{l}}
\newcommand\NerfHom{\underline{N}}
\newcommand\SupHom{\underline{sup}}
\newcommand\SupHomObjet[1]{\underline{sup}_{#1}}
\newcommand\DeuxCatUnOp[1]{{#1}^{op}}
\newcommand\DeuxCatDeuxOp[1]{{#1}^{co}}
\newcommand\DeuxCatToutOp[1]{{#1}^{coop}}
\newcommand\DeuxFoncUnOp[1]{{#1}^{op}}
\newcommand\DeuxFoncDeuxOp[1]{{#1}^{co}}
\newcommand\DeuxFoncToutOp[1]{{#1}^{coop}}
\newcommand\DeuxFoncteurTranche[1]{T^{#1}}
\newcommand\mathdeuxcat[1]{\mathcal{#1}}
\newcommand\DeuxCatPonct{e}
\def\newtheoremp#1{\@ifnextchar[{\@newthmp{#1}}{\@newthmp{#1}[]}}
\def\@newthmp#1[#2]#3{\@ifnextchar[{\@@newthmp{#1}[#2]{#3}}{\@@newthmp{#1}[#2]{#3}[ ]}}
\def\@@newthmp#1[#2]#3[#4]{\expandafter\def\csname #1\endcsname{%
\staterm{#3}\@ifnextchar[{\@credit}{}}%
 \expandafter\let\csname
     end#1\endcsname\endstate}
\def\thebibliography#1{\section*{Références\@mkboth
 {REFERENCES}{REFERENCES}}\list
 {[\arabic{enumi}]}{\settowidth\labelwidth{[#1]}\leftmargin\labelwidth
 \advance\leftmargin\labelsep
 \usecounter{enumi}}
 \def\newblock{\hskip .11em plus .33em minus .07em}
 \sloppy\clubpenalty4000\widowpenalty4000
 \sfcode`\.=1000\relax}
\def\Pr@@f{\subsubsection*{\textsc{Démonstration.}}}
\newtheorem{theo}{Théorème}[section]
\newtheorem{prop}[theo]{Proposition}
\newtheorem{lemme}[theo]{Lemme}
\newtheorem{corollaire}[theo]{Corollaire}
\title{Un Théorème A de Quillen pour les $2$-foncteurs lax}
\author{Jonathan Chiche}
\address{20, rue des Angles, 94130 Nogent-sur-Marne}
\keywords{homotopy theory, 2-categories, Quillen's Theorem A}
\begin{document}

\maketitle

\begin{abstract}
On généralise le Théorème A de Quillen aux triangles de \DeuxFoncteursLax{} commutatifs à transformation près. Un cas particulier de ce résultat permet d'établir que les \deux{}catégories modélisent les types d'homotopie. 
\end{abstract}

\medskip

\selectlanguage{english}

\begin{abstract}
We generalize Quillen's Theorem A to triangles of lax \deux{}functors which
commute up to transformation. It follows from a special case of this result
that \deux{}categ\-ories are models for homotopy types.
\end{abstract}

\selectlanguage{french}

\section{Introduction}

Dans \emph{Pursuing Stacks} \cite{Poursuite} et \emph{Les dérivateurs} \cite{Derivateurs}, Grothendieck considère $\Cat$, la catégorie des petites catégories, comme le « ``paradis originel'' pour l'algèbre topologique » (voir aussi \cite{LettreGrothendieckThomason}). Il montre que toutes les constructions homotopiques usuelles peuvent s’effectuer dans $\Cat$ de façon très naturelle, et souvent plus simplement que dans la catégorie des espaces topologiques $Top$ ou celle des ensembles simpliciaux $\EnsSimp$. Dans cette théorie de l'homotopie de $\Cat$, un rôle crucial est assuré par le Théorème A de Quillen, provenant du travail fondateur de la K-théorie algébrique supérieure de ce dernier \cite{QuillenK}.

D'importants efforts ont été consacrés ces dernières années à l'étude de la théorie de l’homotopie des catégories supérieures. Dans la catégorie $\DeuxCat$, dont les objets sont les \deux{}catégories strictes et les morphismes les \DeuxFoncteursStricts{}, se distinguent trois définitions intéressantes d’équivalence faible. La plus forte est celle d'équivalence de \deux{}catégories. Une notion moins restrictive est celle d'équivalence « de Dwyer-Kan », \deux{}foncteur induisant des équivalences faibles entre les nerfs des catégories de morphismes et une équivalence des catégories obtenues des \deux{}catégories en remplaçant les catégories de morphismes par leur $\pi_{0}$. La catégorie homotopique obtenue en inversant ces équivalences faibles est, au moins conjecturalement, celle des $(\infty, 1)$\nobreakdash-catégories. On peut enfin s’intéresser, et c'est ce que nous ferons dans le présent article, aux équivalences faibles « de Thomason », \deux{}foncteurs induisant une équivalence faible des espaces classifiants. Aucune démonstration ne se trouve à notre connaissance publiée du fait que la catégorie localisée de $\DeuxCat$ relativement à ces équivalences faibles est équivalente à la catégorie homotopique classique. Dans l'introduction de \cite{WHPT}, Worytkiewicz, Hess, Parent et Tonks affirment construire une structure de catégorie de modèles sur $\DeuxCat$ Quillen\nobreakdash-équivalente à la structure de catégorie de modèles classique sur les ensembles simpliciaux. Comme l'ont remarqué Ara et Maltsiniotis \cite{AraMaltsiniotis}, plusieurs passages cruciaux de la démonstration de l'existence de cette structure de catégorie de modèles sur $\DeuxCat$ sont faux dans \cite{WHPT} (les mêmes remarques s'appliquent à l'article \cite{HPTW}). De plus, aucun argument, même incorrect, ne s'y trouve énoncé quant au fait que l'adjonction de Quillen prétendument construite est en fait une équivalence de Quillen. Dans \cite{AraMaltsiniotis}, Ara et Maltsiniotis démontrent notamment l'existence d'une adjonction de Quillen entre $\DeuxCat$ et la catégorie des ensembles simpliciaux munie de sa structure de catégorie de modèles classique. Les équivalences faibles de $\DeuxCat$ pour cette structure de catégorie de modèles sont précisément celles que nous considérons ici. Pour montrer que l'adjonction de Quillen construite est en fait une équivalence de Quillen, Ara et Maltsiniotis s'appuient sur le résultat principal que nous démontrons dans la dernière section du présent article. 

Des pas essentiels dans cette direction ont été franchis par Bullejos, Cegarra et del Hoyo. Le présent article repose sur leurs résultats. Dans \cite{BC}, Bullejos et Cegarra établissent une version \deux{}catégorique du Théorème A de Quillen (voir aussi \cite{Cegarra}), généralisant l'énoncé classique au cas des \DeuxFoncteursStricts{}. Dans \cite{TheseDelHoyo} (voir aussi \cite{ArticleDelHoyo}), del Hoyo, s'appuyant sur ce dernier résultat, étend l'énoncé classique au cas des \deux{}foncteurs lax normalisés de source une \un{}catégorie. Ces deux généralisations se restreignent au cas « absolu », c'est-à-dire sans base. Il en va de même d'une nouvelle généralisation, pour les \DeuxFoncteursLax{}, annoncée par del Hoyo dans \cite{NotesDelHoyo}, dont nous avons par la suite dégagé une variante relative par un argument différent. Suivant une suggestion de Maltsiniotis, nous présentons ici un formalisme permettant de dégager une version encore plus générale — et, peut-être, aussi générale qu'il est permis de l'espérer pour les \deux{}catégories —, mais en même temps très naturelle, du Théorème A de Quillen. 

Le résultat original de Quillen affirme qu'étant donné un foncteur $u : A \to B$, si la catégorie (« comma » ou « tranche ») $A/b$ est faiblement contractile pour tout objet $b$ de $B$, alors $u$ est une équivalence faible, c'est-à-dire que son nerf est une équivalence faible simpliciale. Le cas relatif, qui peut se démontrer de façon tout à fait analogue, s'énonce comme suit. 

\begin{theo}\label{ThACat}(Théorème A de Quillen pour les $1$-foncteurs)
Soit 
$$
\xymatrix{
A
\ar[rr]^{u}
\ar[dr]_{w}
&&B
\ar[dl]^{v}
\\
&C
}
$$
un diagramme commutatif dans $\Cat$. Supposons que, pour tout objet $c$ de $C$, le foncteur induit $A/c \to B/c$ soit une équivalence faible. Alors $u$ est une équivalence faible. 
\end{theo}

On peut d'ores et déjà remarquer que, les transformations naturelles — \deux{}cellules de la \deux{}catégorie $\Cat$ — pouvant s'interpréter comme des homotopies dans $\Cat$, l'on obtiendrait une version plus naturelle du théorème \ref{ThACat} en remplaçant le triangle commutatif de son énoncé par un triangle commutatif \emph{à transformation naturelle près} seulement. Ce résultat, qui semble absent de la littérature, mais démontré par Maltsiniotis dans \cite{CourrielGeorges}, est un cas particulier de celui que nous présentons. 

La formulation de la généralisation souhaitée de l'énoncé du théorème \ref{ThACat} ne présente guère de difficulté sérieuse. La notion de « \deux{}catégorie comma » pour un \DeuxFoncteurLax{} est dégagée depuis longtemps, de même que les généralisations plus ou moins strictes de celle de transformation naturelle. Nous appellerons « transformation stricte » la variante stricte, et « transformation » et « optransformation » les deux variantes faibles, duales l'une de l'autre. Le résultat s'énonce alors comme suit.

\begin{theo}
Soit
$$
\xymatrix{
\mathdeuxcat{A}  
\ar[rr]^{u} 
\ar[dr]_{w} 
&&\mathdeuxcat{B}
\dtwocell<\omit>{<7.3>\sigma} 
\ar[dl]^{v}
\\ 
& 
\mathdeuxcat{C}
&{}
}
$$
un diagramme dans lequel $u$, $v$ et $w$ sont des \DeuxFoncteursLax{} et $\sigma$ est une \DeuxTransformationOpLax{}. Supposons que, pour tout objet $c$ de $\mathdeuxcat{C}$, le \DeuxFoncteurLax{} induit
$
\mathdeuxcat{A} / {c} \to \mathdeuxcat{B} / {c}
$
soit une équivalence faible. Alors $u$ est une équivalence faible.
\end{theo}

La stratégie de démonstration que nous adoptons consiste à se ramener au cas d'un diagramme de \DeuxFoncteursStricts{} commutatif à transformation stricte près. Dans ce cas particulier, que l'on traite dans la section \ref{SectionCasStrict}, on peut associer à de telles données un diagramme commutatif
$$
\xymatrix{
\DeuxInt{\DeuxFoncteurTranche{w}}
\ar[rr]^{\DeuxInt{\DeuxFoncteurTranche{\sigma}}}
\ar[d]
&&\DeuxInt{\DeuxFoncteurTranche{v}}
\ar[d]
\\
\mathdeuxcat{A}
\ar[rr]_{u}
&&\mathdeuxcat{B}
}
$$
dont les flèches verticales sont des préfibrations à fibres faiblement contractiles, et donc des équivalences faibles, dans $\DeuxCat$. Il s'agit des projections associées à l'« intégration » de \DeuxFoncteursStricts{} $\DeuxFoncteurTranche{w}$ et $\DeuxFoncteurTranche{v}$ induits par les données, ce procédé d'intégration étant tout à fait analogue à celui bien connu dans le cadre $1$\nobreakdash-catégorique sous le nom de « construction de Grothendieck ». La flèche horizontale supérieure du diagramme ci-dessus, elle, est une équivalence faible en vertu des hypothèses et de résultats généraux sur l'intégration. On en déduit que $u$ est une équivalence faible. 

La notion de préfibration dans $\DeuxCat$ (définition \ref{DefPrefibration}) que nous présentons dans la section \ref{SectionRappels} s'appuie sur celle de « préadjoint » (définition \ref{DefPreadjoints}), généralisation des classiques foncteurs adjoints de $\Cat$. Dans la mesure où ces notions nous servent avant tout à l'étude des propriétés homotopiques de l'intégration (section \ref{SectionIntegration}), nous n'étudions pas ici les relations qu'elles entretiennent avec les fibrations au sens de Hermida \cite[définition 2.3]{Hermida} et de Buckley \cite[définition 2.1.6]{Buckley} ou les adjonctions dans $\DeuxCat$ respectivement. Remarquons toutefois que la notion d'intégration que nous utilisons constitue un cas très particulier de celle étudiée par Bakovi\'{c} dans \cite{Bakovic}. 

L'argument permettant de se ramener du cas « lax » au cas « strict » utilise
de façon essentielle la construction par Bénabou d'un adjoint à gauche de
l'inclusion de $\DeuxCat$ dans $\DeuxCatLax$ (catégorie dont les morphismes
sont les \DeuxFoncteursLax{}), adjonction jouissant de propriétés
homotopiques remarquables (section \ref{SectionAdjonctionBenabou}). On
montre que sa coünité est une équivalence faible terme à terme (résultat
dont une démonstration par un argument différent se trouve déjà dans
\cite{NotesDelHoyo}). Nous introduisons de plus une notion d'équivalence
faible \emph{lax} qui nous permet d'affirmer que l'unité de cette adjonction
est également une équivalence faible terme à terme. Nous vérifions ensuite
que cette propriété reste vraie dans le cas relatif (lemmes
\ref{StrictInduitEquiFaible} et \ref{LaxInduitEquiFaible}), ce qui permet de
se ramener au cas « strict » déjà traité et achève la démonstration de la
généralisation annoncée du Théorème A de Quillen (théorème
\ref{ThALaxTrancheLaxCoq}). 

Pour finir, la section \ref{SectionEqCatLoc} est consacrée à la démonstration du fait que les \deux{}catégories modélisent les types d'homotopie. Bien qu'il s'agisse d'un résultat intuitif, il ne semble avoir jamais fait l'objet d'une démonstration. Il est possible qu'un argument « à la Fritsch et Latch » permette de l'établir, mais aucun résultat publié ne va dans cette direction. Les types d'homotopie sont les objets de la « catégorie homotopique » $\Hot$, équivalente à la localisation de la catégorie des ensembles simpliciaux par les équivalences faibles entre iceux, ou encore la localisation de $Top$ par les équivalences faibles topologiques (applications continues induisant des bijections entre les $\pi_{0}$ et des isomorphismes entre les groupes d'homotopie supérieurs pour tout choix de point base), ou encore la localisation de $\Cat$ par les équivalences faibles de la structure de catégorie de modèles de Thomason \cite{Thomason} (voir aussi \cite{LFM} pour une caractérisation purement catégorique de ces équivalences faibles). Une stratégie naturelle pour établir le résultat souhaité consiste à comparer les \deux{}catégories aux modèles des types d'homotopie déjà connus, et c'est bien entendu une comparaison entre $\Cat$ et $\DeuxCat$ qui paraît la plus prometteuse. On voudrait donc relier fonctoriellement toute \deux{}catégorie à une catégorie par une équivalence faible. Malheureusement, cet espoir est illusoire, puisqu'il existe des \deux{}catégories qui ne peuvent être reliées à aucune catégorie par une équivalence faible. En effet, par exemple, si $G$ désigne un groupe abélien non trivial quelconque et $\mathdeuxcat{G}$ la \deux{}catégorie n'admettant qu'un seul objet dont la catégorie des endomorphismes est donnée par le groupoïde (à un seul élément) associé au groupe $G$, toute équivalence faible entre $\mathdeuxcat{G}$ et une catégorie se factoriserait par la catégorie ponctuelle, ce qui contredit la non-trivialité de l'homotopie de $\mathdeuxcat{G}$. En revanche, cette obstruction s'évanouit dès que l'on autorise les équivalences faibles \emph{lax}, indice de la nécessité de considérer des morphismes non stricts dans l'étude homotopique des catégories supérieures. On présente dans \cite{TheseMoi} une démonstration s'appuyant sur l'existence d'un tel remplacement \un{}catégorique. Une autre façon de faire disparaître l'obstruction consiste à autoriser les chaînes de longueur supérieure d'équivalences faibles. C'est un tel argument que nous présentons ici, reposant sur l'existence d'une chaîne de longueur $2$. On pourrait croire que cela nous dispense de considérer les morphismes non stricts. En fait, l'étude des relations entre le « monde strict » et le « monde lax » permet de dégager le résultat de façon beaucoup plus naturelle, et nous utilisons pour cela un cas particulier de la généralisation du Théorème A aux \DeuxFoncteursLax{}. Notons que, pour démontrer ce résultat précis, ceux dégagés par del Hoyo suffisent (voir la remarque \ref{RemarqueDelHoyoAuraitPu}). 

La généralisation du Théorème A de Quillen que nous présentons concerne le cas des \deux{}fonc\-teurs lax entre \deux{}catégories. Si nous avons affaibli les flèches, nous n'avons donc pas affaibli les objets. Autrement dit, nous ne nous sommes pas intéressés au cas des \DeuxFoncteursLax{} entre bicatégories. Plusieurs textes récents envisagent cependant les bicatégories du point de vue homotopique, par exemple \cite{CCG}. Nous n'avons pas ressenti le besoin de généraliser jusque-là, essentiellement pour deux raisons. Tout d'abord, en vertu d'un résultat classique de Bénabou, toute bicatégorie est biéquivalente à une \deux{}catégorie. D'autre part, la catégorie dont les objets sont les bicatégories et les morphismes les \DeuxFoncteursLax{} ne saurait être munie d'une structure de catégorie de modèles de Quillen. Il importe donc de restreindre la classe des objets ou celle des morphismes considérés. Toutefois, si l'étude homotopique des bicatégories n'entre pas dans notre propos, elle a bien entendu son intérêt propre. On en trouvera quelques motivations dans la littérature sur le sujet. La mise à jour du présent article nous permet de plus de mentionner le récent travail de Calvo, Cegarra et Heredia \cite{CCH} généralisant le Théorème B de Quillen aux \DeuxFoncteursLax{} entre bicatégories.

Le présent travail s'inscrit dans un programme d'étude homotopique systématique des \deux{}ca\-té\-go\-ries dont la notion de \emph{localisateur fondamental} de $\DeuxCat$ constitue l'un des pivots. Il s'agit de l'analogue, pour la catégorie $\DeuxCat$, de la notion de localisateur fondamental définie pour $\Cat$ par Grothendieck dans \emph{Pursuing Stacks} \cite{Poursuite, THG}. Les résultats présentés ici restent valables dans le cas d'un localisateur fondamental de $\DeuxCat$ arbitraire. Le travail de Cisinski \cite{PMTH} fournit de très nombreux exemples utiles de localisateurs fondamentaux de $\Cat$, et donc de localisateurs fondamentaux de $\DeuxCat$. Cela permet par exemple d'étendre les résultats du présent texte au cas des types d'homotopie tronqués. Pour des raisons de commodité, nous avons toutefois choisi de nous limiter ici au cas des équivalences faibles « classiques » et de présenter la notion générale dans notre thèse \cite{TheseMoi} et l'article \cite{ArticleLocFondMoi}, auquel fera suite l'article \cite{Ara} dans lequel Dimitri Ara construit des structures de catégorie de modèles sur $\DeuxCat$ pour essentiellement tout localisateur fondamental et dans lequel il étudie les relations remarquables entre ces structures et leurs analogues sur $\Cat$ et sur la catégorie des ensembles simpliciaux.

C'est la démonstration par Georges Maltsiniotis \cite{CourrielGeorges} du Théorème A relatif « non-commutatif » en dimension $1$ qui nous a convaincu de la correction de l'énoncé \deux{}di\-men\-sion\-nel analogue et suggéré la stratégie de démonstration\footnote{On pourra noter la similitude avec l'approche adoptée dans \cite{CCH}.} à partir du cas « commutatif ». Antonio Cegarra, répondant à nos questions relatives aux articles \cite{BC} et \cite{Cegarra}, nous a signalé la thèse \cite{TheseDelHoyo} et l'auteur de cette dernière, Matias del Hoyo, nous a communiqué ses notes \cite{NotesDelHoyo}, qui restent inédites mais contiennent la première démonstration dont nous ayons connaissance d'un analogue du Théorème A de Quillen pour les \deux{}foncteurs lax généraux. Nos résultats s'appuient sur ceux de Bullejos, Cegarra et del Hoyo. La version révisée du présent texte reflète des suggestions faites par Steve Lack après sa lecture de \cite{TheseMoi} : c'est grâce à lui que nous avons découvert que certaines notions que nous utilisons se trouvaient déjà traitées dans la littérature, notamment dans \cite{BettiPower}. Nous sommes redevable à Jean Bénabou de plusieurs explications éclairantes et d'encouragements chaleureux que nous espérons n'avoir pas trop déçus. Enfin, pour de nombreuses discussions qui nous ont été très utiles et son soutien, nous exprimons notre reconnaissance à Dimitri Ara.

\section{Conventions, rappels et préliminaires}\label{SectionRappels}

\begin{paragr}
On suppose connue la notion de \deux{}catégorie, et l'on ne rappelle pas non plus celle de \DeuxFoncteurStrict{}. À l'exception des catégories (ou \deux{}catégories) dont les objets sont les petites catégories (ou les petites \deux{}catégories), toutes les catégories et \deux{}catégories considérées seront petites, et nous ferons l'économie de l'adjectif, qui sera souvent sous-entendu. La composée des \un{}cellules sera notée par la juxtaposition (par exemple $f'f$), de même que la composition verticale des \deux{}cellules (par exemple $\beta \alpha$). On notera la composition horizontale des \deux{}cellules par « $\CompDeuxZero{}$ » (par exemple $\alpha' \CompDeuxZero \alpha$). On commettra l'abus sans conséquence fâcheuse consistant à confondre une \un{}cellule avec son identité. Ainsi, par exemple, pour toute \un{}cellule $f$ et toute \deux{}cellule $\alpha$ telles que la composée $1_{f} \CompDeuxZero \alpha$ fasse sens, on notera souvent cette composée $f \CompDeuxZero \alpha$. En symboles, on notera les \un{}cellules par des flèches simples « $\to$ » et les \deux{}cellules par des flèches doubles « $\Rightarrow$ ». Pour deux objets $a$ et $a'$ d'une \deux{}catégorie $\mathdeuxcat{A}$, on notera $\CatHom{\mathdeuxcat{A}}{a}{a'}$ la catégorie dont les objets sont les \un{}cellules de $a$ vers $a'$ et dont les morphismes sont les \deux{}cellules entre icelles dans $\mathdeuxcat{A}$. Pour toute \deux{}catégorie $\mathdeuxcat{A}$, on notera $\DeuxCatUnOp{\mathdeuxcat{A}}$ la \deux{}catégorie obtenue à partir de $\mathdeuxcat{A}$ « en inversant le sens des \un{}cellules », $\DeuxCatDeuxOp{\mathdeuxcat{A}}$ la \deux{}catégorie obtenue à partir de $\mathdeuxcat{A}$ « en inversant le sens des \deux{}cellules » et $\DeuxCatToutOp{\mathdeuxcat{A}}$ la \deux{}catégorie obtenue à partir de $\mathdeuxcat{A}$ « en inversant le sens des \un{}cellules et des \deux{}cellules ». 
\end{paragr}

\begin{paragr}
Un \deux{}\emph{foncteur lax} $u$ d'une \deux{}catégorie $\mathdeuxcat{A}$ vers une \deux{}catégorie $\mathdeuxcat{B}$ correspond à la donnée d'un objet $u(a)$ de $\mathdeuxcat{B}$ pour tout objet $a$ de $\mathdeuxcat{A}$, d'une \un{}cellule $u(f)$ de $u(a)$ vers $u(a')$ dans $\mathdeuxcat{B}$ pour toute \un{}cellule $f$ de $a$ vers $a'$ dans $\mathdeuxcat{A}$, d'une \deux{}cellule $u(\alpha)$ de $u(f)$ vers $u(g)$ dans $\mathdeuxcat{B}$ pour toute \deux{}cellule $\alpha$ de $f$ vers $g$ dans $\mathdeuxcat{A}$, d'une \deux{}cellule $u_{a}$ de $1_{u(a)}$ vers $u(1_{a})$ dans $\mathdeuxcat{B}$ pour tout objet $a$ de $\mathdeuxcat{A}$ et, pour tout couple $(f', f)$ de \un{}cellules de $\mathdeuxcat{A}$ telles que la composée $f'f$ fasse sens, d'une \deux{}cellule $u_{f', f}$ de $u(f') u(f)$ vers $u(f'f)$ dans $\mathdeuxcat{B}$, ces données vérifiant les conditions de cohérence bien connues. On appellera les \deux{}cellules $u_{a}$ et $u_{f', f}$ les \emph{2-cellules structurales de} $u$ associées à $a$ et au couple $(f, f')$ (ou $(f', f)$) respectivement. Un \DeuxFoncteurLax{} est donc un \DeuxFoncteurStrict{} si et seulement si toutes ses \deux{}cellules structurales sont des identités. On appellera \deux{}\emph{foncteur colax} de $\mathdeuxcat{A}$ vers $\mathdeuxcat{B}$ un \DeuxFoncteurLax{} de $\DeuxCatDeuxOp{\mathdeuxcat{A}}$ vers $\DeuxCatDeuxOp{\mathdeuxcat{B}}$. Nous attirons l'attention sur le fait que la terminologie actuellement dominante utilise « oplax » pour ce que nous appelons « colax ». Le choix que nous avons fait procède du désir de se conformer à la convention suivant laquelle les préfixes « op- » et « co- » indiquent un changement de sens des \un{}cellules et des \deux{}cellules respectivement. Tout \DeuxFoncteurLax{} $u : \mathdeuxcat{A} \to \mathdeuxcat{B}$ induit un \DeuxFoncteurLax{} $\DeuxFoncUnOp{u} : \DeuxCatUnOp{\mathdeuxcat{A}} \to \DeuxCatUnOp{\mathdeuxcat{B}}$ ainsi que des \DeuxFoncteursCoLax{} $\DeuxFoncDeuxOp{u} : \DeuxCatDeuxOp{\mathdeuxcat{A}} \to \DeuxCatDeuxOp{\mathdeuxcat{B}}$ et $\DeuxFoncToutOp{u} : \DeuxCatToutOp{\mathdeuxcat{A}} \to \DeuxCatToutOp{\mathdeuxcat{B}}$, et réciproquement. 
\end{paragr}

\begin{df}\label{DefDeuxCellStruct}
Soit $u$ un \DeuxFoncteurLax{} de source $\mathdeuxcat{A}$, $n \geq 3$ un entier et $(f_{1}, \dots, f_{n})$ un $n$\nobreakdash-uplet de \un{}cellules de $\mathdeuxcat{A}$ telles que la composée $f_{n} \dots f_{1}$ fasse sens. On définit inductivement une \deux{}cellule $u_{f_{n}, \dots, f_{1}}$ de $u(f_{n}) \dots u(f_{1})$ vers $u(f_{n} \dots f_{1})$ par 
$$
u_{f_{n}, \dots, f_{1}} = u_{f_{n}, f_{n-1} \dots f_{1}} \left(u(f_{n}) \CompDeuxZero u_{f_{n-1}, \dots, f_{1}}\right)
$$
On appellera cette \deux{}cellule la \deux{}\emph{cellule structurale de} $u$ \emph{associée au n\nobreakdash-uplet} $(f_{1}, \dots, f_{n})$ (ou $(f_{n}, \dots, f_{1})$). On la notera parfois $u_{(f)}$ si le contexte rend clair cette notation. Si cela n'entraîne pas d'ambiguïté, cette \deux{}cellule pourra aussi se trouver notée $u_{x_{0} \to \dots \to x_{n}}$, étant entendu que $x_{i}$ est le but de $f_{i}$. 
\end{df}

\begin{rem}
Ces \deux{}cellules structurales vérifient une « condition de cocycle généralisée » que nous n'explicitons pas mais que, suivant l'usage, nous utiliserons sans le signaler. On renvoie le lecteur intéressé à \cite[proposition 5.1.4 (p. I-47) et théorème 5.2.4 (p. I-49)]{TheseBenabou} pour un résultat beaucoup plus général. 
\end{rem}

\begin{paragr}
Étant donné deux \DeuxFoncteursLax{} (ou deux \DeuxFoncteursCoLax{}) $u$ et $v$ de $\mathdeuxcat{A}$ vers $\mathdeuxcat{B}$, une \emph{transformation} $\sigma$ de $u$ vers $v$ correspond à la donnée d'une \un{}cellule $\sigma_{a} : u(a) \to v(a)$ dans $\mathdeuxcat{B}$ pour tout objet $a$ de $\mathdeuxcat{A}$ et d'une \deux{}cellule $\sigma_{f} : \sigma_{a'} u(f) \Rightarrow v(f) \sigma_{a}$ dans $\mathdeuxcat{B}$ pour toute \un{}cellule $f : a \to a'$ dans $\mathdeuxcat{A}$, ces données vérifiant les conditions de cohérence bien connues. Avec les mêmes données, une \emph{optransformation} de $u$ vers $v$ est une \DeuxTransformationLax{} de $\DeuxFoncUnOp{v}$ vers $\DeuxFoncUnOp{u}$. De façon plus explicite, cela revient à se donner une \un{}cellule $\sigma_{a} : u(a) \to v(a)$ dans $\mathdeuxcat{B}$ pour tout objet $a$ de $\mathdeuxcat{A}$ et une \deux{}cellule $\sigma_{f} : v(f) \sigma_{a} \Rightarrow \sigma_{a'} u(f)$ dans $\mathdeuxcat{B}$ pour toute \un{}cellule $f : a \to a'$ dans $\mathdeuxcat{A}$, ces données vérifiant les conditions de cohérence aussi bien connues que les précédentes. Nous attirons l'attention sur le fait que la terminologie présente dans la littérature récente privilégie les termes « transformation lax » et « transformation oplax », sans qu'il y ait d'accord quant à la convention relative au choix de ce qui est « lax » et ce qui est « oplax ». En revanche, nous nous trouverons en accord avec la terminologie ambiante en ce que nous appellerons \emph{transformation stricte} de $u$ vers $v$ une \DeuxTransformationLax{} (ou, ce qui revient au même dans ce cas précis, une \DeuxTransformationOpLax{}) $\sigma$ de $u$ vers $v$ telle que la \deux{}cellule $\sigma_{f}$ soit une identité pour toute \un{}cellule $f$ de $\mathdeuxcat{A}$. 
\end{paragr}

\begin{paragr}
On notera $\DeuxCat$ la catégorie dont les objets sont les \deux{}catégories et dont les morphismes sont les \DeuxFoncteursStricts{}. C'est une sous-catégorie (non pleine) de la catégorie $\DeuxCatLax$, dont les objets sont les \deux{}catégories et dont les morphismes sont les \DeuxFoncteursLax{}. On notera $\DeuxCatDeuxCat$ la \deux{}catégorie dont la catégorie sous-jacente est $\DeuxCat$ et dont les \deux{}cellules sont les \DeuxTransformationsStrictes{} (c'est en fait la \deux{}catégorie sous-jacente à une 3\nobreakdash-catégorie dont les 3\nobreakdash-cellules sont les modifications).  
\end{paragr}

\begin{paragr}
Soient $u : \mathdeuxcat{A} \to \mathdeuxcat{B}$ un \DeuxFoncteurLax{} et $b$ un objet de $\mathdeuxcat{B}$. On note $\TrancheLax{\mathdeuxcat{A}}{u}{b}$ la \deux{}catégorie définie comme suit. Ses objets sont les couples $(a, p)$ avec $a$ un objet de $\mathdeuxcat{A}$ et $p$ une \un{}cellule de $u(a)$ vers $b$ dans $\mathdeuxcat{B}$. Les \un{}cellules de $(a,p)$ vers $(a',p')$ sont les couples $(f, \alpha)$ avec $f$ une \un{}cellule de $a$ vers $a'$ dans $\mathdeuxcat{A}$ et $\alpha$ une \deux{}cellule de $p$ vers $p' u(f)$ dans $\mathdeuxcat{B}$. Les \deux{}cellules de $(f, \alpha)$ vers $(g, \beta)$ sont les \deux{}cellules $\gamma : f \Rightarrow g$ dans $\mathdeuxcat{A}$ telles que $(p' \CompDeuxZero u(\gamma)) \alpha = \beta$. Les diverses unités et compositions sont héritées de celles de $\mathdeuxcat{A}$ (autrement dit, elles sont « évidentes »). 
\end{paragr}

\begin{rem}
Comme nous l'a fait remarquer Steve Lack, cette définition n'est qu'un cas particulier de celle de \emph{\deux{}catégorie comma}. Il en va de même des variantes duales qui suivent. Les propriétés de fonctorialité que nous utilisons plus loin peuvent également s'énoncer dans un cadre plus général. Nous traitons avec davantage de détails cette question dans \cite[sections 1.4, 1.5 et 1.9]{TheseMoi}.
\end{rem}

\begin{paragr}
Sous les mêmes hypothèses, on définit une \deux{}catégorie $\OpTrancheLax{\mathdeuxcat{A}}{u}{b}$ par la formule 
$$
\OpTrancheLax{\mathdeuxcat{A}}{u}{b} = \DeuxCatUnOp{\left(\TrancheLax{(\DeuxCatUnOp{\mathdeuxcat{A}})}{\DeuxFoncUnOp{u}}{b}\right)}
$$
En particulier, les objets, \un{}cellules et \deux{}cellules de $\OpTrancheLax{\mathdeuxcat{A}}{u}{b}$ se décrivent comme suit. Les objets sont les couples $(a, p : b \to u(a))$, où $a$ est un objet de $\mathdeuxcat{A}$ et $p$ une \un{}cellule de $\mathdeuxcat{B}$. Les \un{}cellules de $(a,p)$ vers $(a',p')$ sont les couples $(f : a \to a', \alpha : p' \Rightarrow u(f) p)$, où $f$ est une \un{}cellule de $\mathdeuxcat{A}$ et $\alpha$ est une \deux{}cellule de $\mathdeuxcat{B}$. Les \deux{}cellules de $(f, \alpha)$ vers $(f', \alpha')$ sont les \deux{}cellules $\beta : f \Rightarrow f'$ dans $\mathdeuxcat{A}$ telles que $(u(\beta) \CompDeuxZero p) \CompDeuxUn \alpha = \alpha'$.
\end{paragr}

\begin{paragr}
Dualement, si $u : \mathdeuxcat{A} \to \mathdeuxcat{B}$ est un \DeuxFoncteurCoLax{} et $b$ un objet de $\mathdeuxcat{B}$, on définit une \deux{}catégorie $\TrancheCoLax{\mathdeuxcat{A}}{u}{b}$ par la formule 
$$
\TrancheCoLax{\mathdeuxcat{A}}{u}{b} = \DeuxCatDeuxOp{\left(\TrancheLax{(\DeuxCatDeuxOp{\mathdeuxcat{A}})}{\DeuxFoncDeuxOp{u}}{b}\right)}
$$
En particulier, les objets, \un{}cellules et \deux{}cellules de $\TrancheCoLax{\mathdeuxcat{A}}{u}{b}$ se décrivent comme suit. Les objets sont les couples $(a, p : u(a) \to b)$, où $a$ est un objet de $\mathdeuxcat{A}$ et $p$ une \un{}cellule de $\mathdeuxcat{B}$. Les \un{}cellules de $(a,p)$ vers $(a',p')$ sont les couples $(f : a \to a', \alpha : p' u(f) \Rightarrow p)$, où $f$ est une \un{}cellule de $\mathdeuxcat{A}$ et $\alpha$ est une \deux{}cellule de $\mathdeuxcat{B}$. Les \deux{}cellules de $(f, \alpha)$ vers $(f', \alpha')$ sont les \deux{}cellules $\beta : f \Rightarrow f'$ dans $\mathdeuxcat{A}$ telles que $\alpha' \CompDeuxUn (p' \CompDeuxZero u(\beta)) = \alpha$.
\end{paragr}

\begin{paragr}
Sous les mêmes hypothèses, on définit une \deux{}catégorie $\OpTrancheCoLax{\mathdeuxcat{A}}{u}{b}$ par la formule 
$$
\OpTrancheCoLax{\mathdeuxcat{A}}{u}{b} = \DeuxCatUnOp{\left(\TrancheCoLax{(\DeuxCatUnOp{\mathdeuxcat{A}})}{\DeuxFoncUnOp{u}}{b}\right)}
$$
En particulier, les objets, \un{}cellules et \deux{}cellules de $\OpTrancheCoLax{\mathdeuxcat{A}}{u}{b}$ se décrivent comme suit. Les objets sont les couples $(a, p : b \to u(a))$, où $a$ est un objet de $\mathdeuxcat{A}$ et $p$ une \un{}cellule de $\mathdeuxcat{B}$. Les \un{}cellules de $(a,p)$ vers $(a',p')$ sont les couples $(f : a \to a', \alpha : u(f) p \Rightarrow p')$, où $f$ est une \un{}cellule de $\mathdeuxcat{A}$ et $\alpha$ est une \deux{}cellule de $\mathdeuxcat{B}$. Les \deux{}cellules de $(f, \alpha)$ vers $(f', \alpha')$ sont les \deux{}cellules $\beta : f \Rightarrow f'$ dans $\mathdeuxcat{A}$ telles que $\alpha' \CompDeuxUn (u(\beta) \CompDeuxZero p) = \alpha$.
\end{paragr}

Si $u$ est une identité, on ne le fera pas figurer dans ces notations.

\begin{paragr}
Soit 
$$
\xymatrix{
\mathdeuxcat{A}  
\ar[rr]^{u} 
\ar[dr]_{w} 
&&\mathdeuxcat{B}
\dtwocell<\omit>{<7.3>\sigma} 
\ar[dl]^{v}
\\ 
& 
\mathdeuxcat{C}
&{}
}
$$
un diagramme dans lequel $u$, $v$ et $w$ sont des \DeuxFoncteursLax{} et $\sigma$ est une \DeuxTransformationOpLax{}. Pour tout objet $c$ de $\mathdeuxcat{C}$, ces données induisent un \DeuxFoncteurLax{} $\DeuxFoncTrancheLaxCoq{u}{\sigma}{c}$ de $\TrancheLax{\mathdeuxcat{A}}{w}{c}$ vers $\TrancheLax{\mathdeuxcat{B}}{v}{c}$ défini par $(\DeuxFoncTrancheLaxCoq{u}{\sigma}{c}) (a, p) = (u(a), p \sigma_{a})$ et $(\DeuxFoncTrancheLaxCoq{u}{\sigma}{c})_{(a,p)} = u_{a}$ pour tout objet $(a,p)$ de $\TrancheLax{\mathdeuxcat{A}}{w}{c}$, $(\DeuxFoncTrancheLaxCoq{u}{\sigma}{c}) (f, \alpha) = (u(f), (p' \CompDeuxZero \sigma_{f}) (\alpha \CompDeuxZero \sigma_{a}))$ pour toute \un{}cellule $(f, \alpha)$ de $\TrancheLax{\mathdeuxcat{A}}{w}{c}$, $(\DeuxFoncTrancheLaxCoq{u}{\sigma}{c}) (\beta) = u(\beta)$ pour toute \deux{}cellule $\beta$ de $\TrancheLax{\mathdeuxcat{A}}{w}{c}$ et $(\DeuxFoncTrancheLaxCoq{u}{\sigma}{c})_{(f', \alpha'), (f, \alpha)} = u_{f', f}$ pour tout couple $((f', \alpha'), (f, \alpha))$ de \un{}cellules de $\TrancheLax{\mathdeuxcat{A}}{w}{c}$ telles que la composée $(f', \alpha') (f, \alpha)$ fasse sens. Si $\sigma$ est une identité, on notera $\DeuxFoncTrancheLaxCoq{u}{}{c}$ le \DeuxFoncteurLax{} induit par les données ci-dessus. Notons que, dans le cas général, $\DeuxFoncTrancheLaxCoq{u}{\sigma}{c}$ est un \DeuxFoncteurStrict{} dès qu'il en est de même de $u$. 
\end{paragr}

L'objet principal de cet article est de montrer que si, partant des données ci-dessus, $\DeuxFoncTrancheLaxCoq{u}{\sigma}{c}$ est une équivalence faible pour tout objet $c$ de $\mathdeuxcat{C}$, alors $u$ est une équivalence faible. Pour donner un sens à cet énoncé, il nous faut introduire la notion d'équivalence faible. 

\begin{paragr}
Pour tout entier $m \geq 0$, on note $[m]$ la catégorie associée à l'ensemble $\{0, \dots, m \}$ ordonné par l'ordre naturel sur l'ensemble des entiers. (C'est donc, assez malheureusement, la catégorie associée à l'ordinal $m+1$, ce qui résulte des conventions simpliciales.) Pour tout \DeuxFoncteurLax{} $x : [m] \to \mathdeuxcat{A}$, on notera $x_{i}$ l'image par $x$ de l'objet $i$ de $[m]$, $x_{j,i}$ l'image  par $x$ de l'unique morphisme de $i$ vers $j$ dans $[m]$, pour tout couple $(i,j)$ vérifiant $0 \leq i \leq j \leq m$, et $x_{k, j, i}$ la \deux{}cellule structurale de $x$ associée au couple de \un{}cellules composables $(i \to j, j \to k)$ de $[m]$, pour tout triplet $(i, j, k)$ vérifiant $0 \leq i \leq j \leq k \leq m$. Pour éviter les confusions, on pourra noter $(x)_{i}$ la \deux{}cellule structurale $1_{x_{i}} \Rightarrow x(1_{i})$.
\end{paragr}

\begin{paragr}
Pour toute \deux{}catégorie $\mathdeuxcat{A}$, on note $\NerfLax{\mathdeuxcat{A}}$ l'ensemble simplicial défini par 
$$
(\NerfLax{\mathdeuxcat{A}})_{m} = \EnsHom{\DeuxCatLax}{[m]}{\mathdeuxcat{A}}
$$ 
(l'ensemble des \DeuxFoncteursLax{} de $[m]$ vers $\mathdeuxcat{A}$) pour tout entier $m \geq 0$ et dont les faces et dégénérescences sont définies de façon « évidente ». On vérifie la fonctorialité de cette construction, et l'on associe, à tout \DeuxFoncteurLax{} $u : \mathdeuxcat{A} \to \mathdeuxcat{B}$, un morphisme d'ensembles simpliciaux $\NerfLax{(u)} : \NerfLax{\mathdeuxcat{A}} \to \NerfLax{\mathdeuxcat{B}}$. L'ensemble simplicial $\NerfLax{\mathdeuxcat{A}}$ s'appelle le \emph{nerf} (ou \emph{nerf lax}, s'il convient de distinguer) de $\mathdeuxcat{A}$ et le morphisme simplicial $\NerfLax{(u)}$ le \emph{nerf} (ou \emph{nerf lax}, s'il convient de distinguer) de $u$. En vertu des résultats de \cite{CCG}, cette définition du nerf est équivalente, du point de vue homotopique, à celle fournie par le nerf défini par Street dans \cite{StreetAOS}, dont le lecteur est peut-être plus familier mais qui n'est pas fonctoriel sur les \DeuxFoncteursLax{} en général. On dira donc en particulier qu'un \DeuxFoncteurLax{} est une équivalence faible si son nerf est une équivalence faible simpliciale. Notons qu'il est possible de caractériser la classe des équivalences faibles de façon purement \deux{}catégorique, sans nullement faire appel aux ensembles simpliciaux ou aux espaces topologiques (voir \cite{TheseMoi} ou \cite{ArticleLocFondMoi} pour un traitement de cette question).
\end{paragr}

\begin{paragr}
Une propriété essentielle de la classe des équivalences faibles est qu'elle est \emph{faiblement saturée}\footnote{Elle est même \emph{fortement} saturée, mais nous n'utiliserons pas cette propriété.}, c'est-à-dire qu'elle vérifie les points suivants.

\begin{itemize}
\item[FS1] Les identités sont des équivalences faibles.
\item[FS2] Si deux des trois flèches d'un triangle commutatif sont des équivalences faibles, alors la troisième en est une.
\item[FS3] Si $i : \mathdeuxcat{A} \to \mathdeuxcat{B}$ et $r :  \mathdeuxcat{B} \to  \mathdeuxcat{A}$ vérifient $ri = 1_{\mathdeuxcat{A}}$ et si $ir$ est une équivalence faible, alors il en est de même de $r$ (et donc aussi de $i$ en vertu de ce qui précède). 
\end{itemize}

En particulier, tout isomorphisme est une équivalence faible. Suivant l'usage, on appellera la propriété FS2 propriété « de $2$ sur $3$ ». On notera $\DeuxLocFond{W}$ la classe des équivalences faibles qui sont des \DeuxFoncteursStricts{} (les « équivalences faibles strictes ») et $\DeuxLocFondLaxInduit{W}$ la classe des équivalences faibles qui sont des \DeuxFoncteursLax{} (les « équivalences faibles lax »).
\end{paragr}

\begin{paragr}
On notera $\DeuxCatPonct$ la \deux{}catégorie ponctuelle, qui ne possède qu'un seul objet, qu'une seule \un{}cellule et qu'une seule \deux{}cellule. On se permettra de la confondre avec la catégorie ponctuelle (bien que cette dernière n'ait, à proprement parler, aucune \deux{}cellule). 
\end{paragr}

\begin{df}
On dira qu'une \deux{}catégorie $\mathdeuxcat{A}$ est \emph{asphérique}\footnote{Plutôt que « faiblement contractile », utilisé dans l'introduction. Nous adoptons donc la terminologie de Grothendieck.} si le \DeuxFoncteurStrict{} canonique $\mathdeuxcat{A} \to \DeuxCatPonct$ est une équivalence faible.
\end{df}

\begin{paragr}\label{AsphOpCo}
On utilisera le fait élémentaire qu'une \deux{}catégorie $\mathdeuxcat{A}$ est asphérique si et seulement si $\DeuxCatUnOp{\mathdeuxcat{A}}$ l'est, ou encore si et seulement si $\DeuxCatDeuxOp{\mathdeuxcat{A}}$ l'est, la réalisation géométrique « ne tenant pas compte des orientations ».
\end{paragr}

\begin{df}\label{DefHomotopie}
Étant donné des \DeuxFoncteursLax{} $u$ et $v$ de $\mathdeuxcat{A}$ vers $\mathdeuxcat{B}$, une \emph{homotopie élémentaire de} $u$ \emph{vers} $v$ est un \DeuxFoncteurLax{} $h : [1] \times \mathdeuxcat{A} \to \mathdeuxcat{B}$ tel que le diagramme\footnote{Dans lequel on commet l'abus d'identifier $\mathdeuxcat{A}$ à $\DeuxCatPonct{} \times \mathdeuxcat{A}$.} 
$$
\xymatrix{
&[1] \times \mathdeuxcat{A}
\ar[dd]^{h}
\\
\mathdeuxcat{A}
\ar[ur]^{(\{0\}, 1_{\mathdeuxcat{A}})}
\ar[dr]_{u}
&&\mathdeuxcat{A}
\ar[ul]_{(\{1\}, 1_{\mathdeuxcat{A}})}
\ar[dl]^{v}
\\
&\mathdeuxcat{B}
}
$$
soit commutatif. 
\end{df}

\begin{rem}
Dans ce cas, on pourra dire que $u$ est \emph{élémentairement homotope à} $v$, mais l'on prendra garde au fait que la relation « être élémentairement homotope à » n'est en règle générale ni symétrique, ni transitive. 
\end{rem}

\begin{lemme}\label{HomotopieW}
S'il existe une homotopie élémentaire d'un \DeuxFoncteurLax{} $u$ vers un \deux{}fonc\-teur lax $v$, alors $u$ est une équivalence faible si et seulement si $v$ en est une.
\end{lemme}

\begin{proof}
Notons $\mathdeuxcat{A}$ la source de $u$ et $v$. Comme les deux inclusions canoniques $(\{0\}, 1_{\mathdeuxcat{A}})$ et $(\{1\}, 1_{\mathdeuxcat{A}})$ de $\mathdeuxcat{A}$ vers $[1] \times \mathdeuxcat{A}$ sont des équivalences faibles, le résultat découle de deux applications consécutives de la propriété « de $2$ sur $3$ ».
\end{proof}

\begin{lemme}\label{DeuxTransFoncLax}
Soient $u, v : \mathdeuxcat{A} \to \mathdeuxcat{B}$ deux \DeuxFoncteursLax{} parallèles. S'il existe une \DeuxTransformationLax{} ou une \DeuxTransformationOpLax{} de $u$ vers $v$, alors il existe une homotopie élémentaire de $u$ vers $v$.
\end{lemme}

\begin{proof}
Une fois l'énoncé dégagé, la démonstration ne présente aucune difficulté. Voir \cite[démonstration de la proposition 7.1.(ii)]{CCG}. Les calculs de cohérence sont détaillés dans \cite{TheseMoi}.
\end{proof}

%

\begin{paragr}
La réalisation géométrique du nerf d'une catégorie admettant un objet initial ou un objet final est un espace faiblement contractile \cite[corollaire 2, page 8]{QuillenK}. La définition \ref{DefOF2} introduit une notion analogue, du point de vue homotopique, à celle d'objet initial ou final, dans le contexte \deux{}catégorique. 
\end{paragr}

\begin{df}\label{DefOF2}
On dira qu'un objet $z$ d'une \deux{}catégorie $\mathdeuxcat{A}$ \emph{admet un objet final} si, pour tout objet $a$ de $\mathdeuxcat{A}$, la catégorie $\CatHom{\mathdeuxcat{A}}{a}{z}$ admet un objet final. On dira qu'il \emph{admet un objet initial} s'il admet un objet final dans $\DeuxCatDeuxOp{\mathdeuxcat{A}}$, autrement dit si, pour tout objet $a$ de $\mathdeuxcat{A}$, la catégorie $\CatHom{\mathdeuxcat{A}}{a}{z}$ admet un objet initial. 
\end{df}

\begin{exemple}
Un objet de la \deux{}catégorie $\Cat$ admet un objet final (\emph{resp.} initial) en ce sens si et seulement si c'est une catégorie admettant un objet final (\emph{resp.} initial) au sens usuel ; c'est la raison de l'adoption de cette terminologie.
\end{exemple}

\begin{rem}\label{RemarqueJay}
Nous devons à la consultation de \cite{BettiPower} d'avoir appris que la notion d'objet admettant un objet final se trouvait déjà mise en valeur par Jay, dans \cite{Jay}, sous le nom d'\emph{objet final local}, dans un contexte bicatégorique. 
\end{rem}

\begin{exemple}\label{ExemplesOF}
Si $\mathdeuxcat{A}$ est une \deux{}catégorie et $a$ un objet de $\mathdeuxcat{A}$, alors $\TrancheCoLax{\mathdeuxcat{A}}{}{a}$ (\emph{resp.} $\TrancheLax{\mathdeuxcat{A}}{}{a}$) admet un objet admettant un objet final (\emph{resp.} initial). 
\end{exemple}

\begin{lemme}\label{OFAspherique}
Une \deux{}catégorie admettant un objet admettant un objet final (\emph{resp.} initial) est asphérique. 
\end{lemme}

\begin{proof}
Il suffit de vérifier la première assertion (voir la remarque \ref{AsphOpCo}). Soit donc $z$ un objet d'une \deux{}catégorie $\mathdeuxcat{A}$ tel que, pour tout objet $a$ de $\mathdeuxcat{A}$, la catégorie $\CatHom{\mathdeuxcat{A}}{a}{z}$ admette un objet final, que l'on notera $p_{a}$. Pour toute \un{}cellule $f : a \to z$ de $\mathdeuxcat{A}$, l'on notera $\varphi_{f}$ l'unique \deux{}cellule de $f$ vers $p_{a}$. On définit un \deux{}endofoncteur constant (c'est-à-dire se factorisant par la \deux{}catégorie ponctuelle $\DeuxCatPonct$) $Z$ de $\mathdeuxcat{A}$ par les formules
$$
\begin{aligned}
Z : \mathdeuxcat{A} &\to \mathdeuxcat{A}
\\
a &\mapsto z
\\
f &\mapsto 1_{z}
\\
\alpha &\mapsto 1_{1_{z}}
\end{aligned}
$$
On pose alors $\sigma_{a} = p_{a}$ pour tout objet $a$ et $\sigma_{f} = \varphi_{p_{a'} f}$ pour toute \un{}cellule $f : a \to a'$ de $\mathdeuxcat{A}$. Cela définit une \DeuxTransformationLax{} de $1_{\mathdeuxcat{A}}$ vers $Z$. En vertu du lemme \ref{DeuxTransFoncLax}, il existe une homotopie élémentaire de $1_{\mathdeuxcat{A}}$ vers $Z$. Le lemme \ref{HomotopieW} permet donc d'affirmer que $Z$ est une équivalence faible.Le \DeuxFoncteurStrict{} canonique $\mathdeuxcat{A} \to \DeuxCatPonct$ est donc une équivalence faible.
\end{proof}

\begin{corollaire}\label{TranchesAspheriques}
Soit $a$ un objet d'une \deux{}catégorie $\mathdeuxcat{A}$. Les \deux{}catégories $\TrancheLax{\mathdeuxcat{A}}{}{a}$, $\TrancheCoLax{\mathdeuxcat{A}}{}{a}$, $\OpTrancheLax{\mathdeuxcat{A}}{}{a}$ et $\OpTrancheCoLax{\mathdeuxcat{A}}{}{a}$ sont asphériques.
\end{corollaire}

\begin{proof}
C'est une conséquence immédiate de l'exemple \ref{ExemplesOF} et du lemme \ref{OFAspherique}.
\end{proof}

Dans un souci de cohérence interne, nous voulons maintenant exposer une démonstration du cas relatif du Théorème A de Quillen pour les \DeuxFoncteursStricts{} (théorème \ref{ThABC}), résultat dont le cas absolu se trouve montré par Bullejos et Cegarra dans \cite{BC} et généralisé par Cegarra dans \cite{Cegarra}. Nous utilisons les résultats de ce dernier texte, sans nullement prétendre faire œuvre originale : l'exercice consiste à traduire les arguments de \cite{BC} dans le langage de \cite{Cegarra}, en passant au cas relatif. La manœuvre est suffisamment non triviale pour que nous considérions utile d'en exposer les détails. À l'instar des auteurs des articles \cite{BC} et \cite{Cegarra}, nous utiliserons la notion de \deux{}foncteur lax normalisé ainsi que celle du nerf qui lui est associé. Du point de vue homotopique, cela ne change rien, comme on va le voir, mais les calculs s'en trouvent simplifiés. Le lecteur qui souhaiterait travailler avec le seul nerf lax saura modifier les démonstrations de la façon qui s'impose.

On dira qu'un \DeuxFoncteurLax{} $u : \mathdeuxcat{A} \to \mathdeuxcat{B}$ est \emph{normalisé} si, pour tout objet $a$ de $\mathdeuxcat{A}$, on a $u(1_{a}) = 1_{u(a)}$ et si, pour toute \un{}cellule $f : a \to a'$ dans $\mathdeuxcat{A}$, on a $u_{1_{a'}, f} = u_{f, 1_{a}} = 1_{u(f)}$. On vérifie sans difficulté que cela permet de définir une catégorie $\DeuxCatLaxNor$ dont les objets sont les \deux{}catégories et les morphismes les \deux{}foncteurs lax normalisés. 

Pour toute \deux{}catégorie $\mathdeuxcat{A}$, on note $\NerfLaxNor{\mathdeuxcat{A}}$ l'ensemble simplicial défini par 
$$
(\NerfLaxNor{\mathdeuxcat{A}})_{m} = \EnsHom{\DeuxCatLaxNor}{[m]}{\mathdeuxcat{A}}
$$ 
(l'ensemble des \DeuxFoncteursLax{} normalisés de $[m]$ vers $\mathdeuxcat{A}$) pour tout entier $m \geq 0$ et dont les faces et dégénérescences sont définies de façon « évidente ». On vérifie la fonctorialité de cette construction, et l'on associe, à tout \DeuxFoncteurLax{} normalisé $u : \mathdeuxcat{A} \to \mathdeuxcat{B}$, un morphisme d'ensembles simpliciaux $\NerfLaxNor{u} : \NerfLaxNor{\mathdeuxcat{A}} \to \NerfLaxNor{\mathdeuxcat{B}}$. On désignera le morphisme simplicial $\NerfLaxNor{u}$ comme le \emph{nerf lax normalisé de} $u$. 

L'avantage du nerf lax normalisé sur le nerf lax précédemment défini tient à ce que les calculs le faisant intervenir comportent moins de conditions de cohérence. De plus, comme on s'y attend, ces deux nerfs sont homotopiquement équivalents. Plus précisément, pour toute \deux{}catégorie $\mathdeuxcat{A}$, l'inclusion canonique $\NerfLaxNor{\mathdeuxcat{A}} \hookrightarrow \NerfLax{\mathdeuxcat{A}}$ est une équivalene faible. Comme, pour tout \deux{}foncteur lax normalisé $u : \mathdeuxcat{A} \to \mathdeuxcat{B}$, le diagramme
$$
\xymatrix{
\NerfLaxNor{\mathdeuxcat{A}} 
\ar[r]^{\NerfLaxNor{u}}
\ar[d]
&\NerfLaxNor{\mathdeuxcat{B}} 
\ar[d]
\\
\NerfLax{\mathdeuxcat{A}} 
\ar[r]_{\NerfLax{u}}
&\NerfLax{\mathdeuxcat{B}}
}
$$
est commutatif (les flèches verticales désignant les inclusions canoniques), on en déduit que $\NerfLax{u}$ est une équivalence faible simpliciale si et seulement si c'est le cas de $\NerfLaxNor{u}$. Pour ces affirmations non démontrées dans le présent texte, nous renvoyons à \cite{CCG}. 

Nous définissons maintenant la notion de \deux{}catégorie comma au-dessus (ou au-dessous) d'un simplexe, généralisant celle de \deux{}catégorie au-dessus (ou au-dessous) d'un objet, correspondant au cas des $0$\nobreakdash-simplexes. Nous n'en présentons pas la version la version la plus générale, seul le cas des \DeuxFoncteursStricts{} nous intéressant ici.  Soit donc $u : \mathdeuxcat{A} \to \mathdeuxcat{B}$ un \DeuxFoncteurStrict{} et $b$ un $m$\nobreakdash-simplexe de $\NerfLaxNor{\mathdeuxcat{B}}$. On définit une \deux{}catégorie $\TrancheCoLax{\mathdeuxcat{A}}{u}{b}$ comme suit. Ses objets sont les couples $(a, x)$, avec $a$ un objet de $\mathdeuxcat{A}$ et $x$ un $m+1$\nobreakdash-simplexe de $\NerfLaxNor{\mathdeuxcat{B}}$ tel que $x_{0} = u(a)$ et $d_{0} x = b$. (L'expression $d_{0} x$ désigne la zéroième face de $x$.) Une \un{}cellule de $(a, x)$ vers $(a', x')$ est un couple $(f, y)$ avec $f : a \to a'$ une \un{}cellule de $\mathdeuxcat{A}$ et $y$ un $m+2$\nobreakdash-simplexe de $\mathdeuxcat{B}$ tel que $y_{1,0} = u(f)$, $d_{0} y = x'$ et $d_{1} y = x$. Étant donné les mêmes objets $(a, x)$ et $(a', x')$ ainsi que deux \un{}cellules $(f, y)$ et $(f', y')$ du premier vers le second, une \deux{}cellule de la première vers la seconde est une \deux{}cellule $\alpha : f \Rightarrow f'$ dans $\mathdeuxcat{A}$ telle que, pour tout $0 \leq i \leq m$, on ait l'égalité
$$
y'_{i+2, 1, 0} \left(y_{i+2, 1} \CompDeuxZero u(\alpha)\right) = y_{i+2, 1, 0}
$$
Les diverses unités et compositions sont définies de la façon « évidente ». 

Construisons maintenant un couple de \DeuxFoncteursStricts{} qui sont des équivalences faibles entre $\TrancheCoLax{\mathdeuxcat{A}}{u}{b}$ et $\TrancheCoLax{\mathdeuxcat{A}}{u}{b_{0}}$, suivant toujours en cela \cite{Cegarra}. Notons $R$ le \DeuxFoncteurStrict{} défini par 
$$
\begin{aligned}
\TrancheCoLax{\mathdeuxcat{A}}{u}{b} &\to \TrancheCoLax{\mathdeuxcat{A}}{u}{b_{0}}
\\
(a, x) &\mapsto (a, {x}_{1,0})
\\
(f, y) &\mapsto (f, {y}_{2,1,0})
\\
\gamma &\mapsto \gamma
\end{aligned}
$$
Il admet une section $I : \TrancheCoLax{\mathdeuxcat{A}}{u}{b_{0}} \to \TrancheCoLax{\mathdeuxcat{A}}{u}{b}$ définie comme suit. À tout objet $(a, p)$ de $\TrancheCoLax{\mathdeuxcat{A}}{u}{b_{0}}$, $I$ associe le couple $(a, x)$ défini par $d_{0} x = b$, $x_{0} = u(a)$, $x_{1,0} = p$, $x_{i+1, 0} = b_{i, 0} p$ et $x_{j+1, i+1, 0} = b_{j, i, 0} p$. À toute \un{}cellule $(f, \alpha)$ de $(a, p)$ vers $(a', p')$, $I$ associe le couple $(f, y)$ défini par $y_{i+2, 1, 0} = b_{i, 0} \CompDeuxZero \alpha$. Enfin, pour toute \deux{}cellule $\gamma$ de $\TrancheCoLax{\mathdeuxcat{A}}{u}{b_{0}}$, on pose $I(\gamma) = \gamma$. L'égalité $RI = 1_{\TrancheCoLax{\mathdeuxcat{A}}{u}{b_{0}}}$ est alors évidente. De plus, on construit une \DeuxTransformationStricte{} $\sigma : 1_{\TrancheCoLax{\mathdeuxcat{A}}{u}{b}} \Rightarrow IR$ en posant $\sigma_{(a, x)} = (1_{a}, \tilde{x})$ avec $\tilde{x}_{i+2, 1, 0} = x_{i+1, 1, 0}$. En vertu des lemmes \ref{HomotopieW} et \ref{DeuxTransFoncLax}, $IR$ est donc une équivalence faible. On en déduit que $I$ et $R$ sont des équivalences faibles. On a dualement des définitions et résultats analogues pour les \deux{}catégories $\TrancheLax{\mathdeuxcat{A}}{u}{b}$, $\OpTrancheCoLax{\mathdeuxcat{A}}{u}{b}$ et $\OpTrancheLax{\mathdeuxcat{A}}{u}{b}$.

\begin{corollaire}\label{TranchesSimplexesAspheriques}
Soient $\mathdeuxcat{A}$ une \deux{}catégorie et $a$ un simplexe de $\NerfLaxNor{\mathdeuxcat{A}}$. Les \deux{}catégories $\TrancheCoLax{\mathdeuxcat{A}}{}{a}$, $\TrancheLax{\mathdeuxcat{A}}{}{a}$, $\OpTrancheCoLax{\mathdeuxcat{A}}{}{a}$ et $\OpTrancheLax{\mathdeuxcat{A}}{}{a}$ sont asphériques.
\end{corollaire}

\begin{proof}
C'est une conséquence immédiate des considérations précédentes et du corollaire \ref{TranchesAspheriques}.
\end{proof}

Soient maintenant 
$$
\xymatrix{
\mathdeuxcat{A} 
\ar[rr]^{u}
\ar[dr]_{w}
&&\mathdeuxcat{B}
\ar[dl]^{v}
\\
&\mathdeuxcat{C}
}
$$
un triangle commutatif de \DeuxFoncteursStricts{} et $c$ un simplexe de $\NerfLaxNor{\mathdeuxcat{C}}$. Ces données permettent de définir un \DeuxFoncteurStrict{}
$$
\DeuxFoncTrancheCoLax{u}{c} : \TrancheCoLax{\mathdeuxcat{A}}{w}{c} \to \TrancheCoLax{\mathdeuxcat{B}}{v}{c}
$$
de la façon « évidente ».
On vérifie alors la commutativité du diagramme
$$
\xymatrix{
\TrancheCoLax{\mathdeuxcat{A}}{w}{c}
\ar[r]^{\DeuxFoncTrancheCoLax{u}{c}}
\ar[d]
&\TrancheCoLax{\mathdeuxcat{B}}{v}{c}
\ar[d]
\\
\TrancheCoLax{\mathdeuxcat{A}}{w}{c_{0}}
\ar[r]_{\DeuxFoncTrancheCoLax{u}{c_{0}}}
&\TrancheCoLax{\mathdeuxcat{B}}{v}{c_{0}}
}
$$
dans lequel les flèches verticales désignent les \DeuxFoncteursStricts{} canoniques précédemment notés $R$. On en déduit : 

\begin{lemme}\label{WSurSimplexe}
En conservant les notations ci-dessus, $\DeuxFoncTrancheCoLax{u}{c}$ est une équivalence faible si et seulement si $\DeuxFoncTrancheCoLax{u}{c_{0}}$ en est une. 
\end{lemme}

Sous ces mêmes hypothèses, on définit un ensemble bisimplicial $S_{w}$ par la formule
$$
(S_{w})_{m,n} = \left\{ \left(a \in (\NerfLaxNor{\mathdeuxcat{A}})_{m}, c \in (\NerfLaxNor{\mathdeuxcat{C}}\right)_{m+n+1}), d_{m+1}^{n+1}(c) = (\NerfLaxNor{w}) (a)  \right\}
$$
Autrement dit, on considère les couples $(a, c)$ tels que « l'image de $a$ par $w$ soit le début de $c$ ». On définit de même un ensemble bisimplicial $S_{v}$ par la formule
$$
(S_{v})_{m,n} = \left\{ \left(b \in (\NerfLaxNor{\mathdeuxcat{B}})_{m}, c \in (\NerfLaxNor{\mathdeuxcat{C}})_{m+n+1}\right), d_{m+1}^{n+1}(c) = (\NerfLaxNor{v}) (b)  \right\}
$$
De plus, on considère $\NerfLaxNor{\mathdeuxcat{A}}$ et $\NerfLaxNor{\mathdeuxcat{B}}$ comme des ensembles bisimpliciaux constants sur les colonnes. Autrement dit, pour tout couple $(m, n) \in \mathbb{N}^{2}$, on pose $(\NerfLaxNor{\mathdeuxcat{A}})_{m,n} = (\NerfLaxNor{\mathdeuxcat{A}})_{m}$ et $(\NerfLaxNor{\mathdeuxcat{B}})_{m,n} = (\NerfLaxNor{\mathdeuxcat{B}})_{m}$. On note de même $\NerfLaxNor{u} : \NerfLaxNor{\mathdeuxcat{A}} \to \NerfLaxNor{\mathdeuxcat{B}}$ le morphisme d'ensembles bisimpliciaux induit par $\NerfLaxNor{u}$. On construit alors un diagramme commutatif bisimplicial
$$
\xymatrix{
\NerfLaxNor{\mathdeuxcat{A}}
\ar[r]^{\NerfLaxNor{u}}
&
\NerfLaxNor{\mathdeuxcat{B}}
\\
S_{w}
\ar[u]^{\varphi_{w}}
\ar[r]_{U}
&S_{v}
\ar[u]_{\varphi_{v}}
}
$$
comme suit. Pour tout objet $(a, c) de (S_{w})_{m,n}$, 
$$
(\varphi_{w})_{m,n} (a, c) = a
$$
et 
$$
U_{m,n} (a, c) = \left((\NerfLaxNor{u}) (a), c \right)
$$
La définition de $\varphi_{v}$ est analogue à celle de $\varphi_{w}$.

On note $\Delta$ la catégorie des simplexes. On rappelle que le foncteur diagonal $\Delta \to \Delta \times \Delta$, $[n] \mapsto ([n],[n])$ induit un foncteur $diag : \widehat{\Delta \times \Delta} \to \widehat{\Delta}$. Par construction, on a $diag (\NerfLaxNor{u}) = \NerfLaxNor{u}$. Pour démontrer le résultat souhaité, il suffit donc de vérifier que $diag (U)$, $diag (\varphi_{w})$ et $diag (\varphi_{v})$ sont des équivalences faibles simpliciales. Pour cela, nous utiliserons le lemme classique suivant.

\begin{lemme}\label{LemmeBisimplicial}
Soit $\psi$ un morphisme d'ensembles bisimpliciaux. Supposons que, pour tout entier $m \geq 0$ (\emph{resp.} $n \geq 0$), le morphisme simplicial induit $\psi_{m, \bullet}$ (\emph{resp.} $\psi_{\bullet, n}$) soit une équivalence faible. Alors, $diag (\psi)$ est une équivalence faible.
\end{lemme}

\begin{rem}
Autrement dit, un morphisme bisimplicial qui est une équivalence faible « sur chaque ligne » ou « sur chaque colonne » est une équivalence faible « sur la diagonale ». Pour une démonstration de ce résultat folklorique mais non-trivial, le lecteur pourra consulter \cite[p. 94-95]{QuillenK}, \cite[chapitre XII, paragraphe 4.3]{BK} ou \cite[proposition 1.7]{GoerssJardine}. On pourra également se reporter à \cite[lemme 3.5]{Illusie}, correspondant au cas particulier que nous utiliserons du lemme \ref{LemmeBisimplicial}.
\end{rem} 

En vertu du lemme \ref{LemmeBisimplicial}, pour montrer que $diag (U)$ est une équivalence faible simpliciale, il suffit de montrer que, pour tout entier $n \geq 0$, le morphisme simplicial $U_{\bullet, n} : (S_{w})_{\bullet, n} \to (S_{v})_{\bullet, n}$ en est une. On remarque que la source et le but de ce morphisme s'identifient à
\begin{equation}
\coprod_{c \in (\NerfLaxNor{\mathdeuxcat{C}})_{n}} \NerfLaxNor \left(\TrancheCoLax{\mathdeuxcat{A}}{w}{c}\right)
\end{equation}
et
\begin{equation}
\coprod_{c \in (\NerfLaxNor{\mathdeuxcat{C}})_{n}} \NerfLaxNor (\TrancheCoLax{\mathdeuxcat{B}}{v}{c})
\end{equation}
respectivement, et $U_{\bullet, n}$ s'identifie à 
\begin{equation}
\coprod_{c \in (\NerfLaxNor{\mathdeuxcat{C}})_{n}} \DeuxFoncTrancheCoLax{u}{c}
\end{equation}
En vertu des hypothèses et du lemme \ref{WSurSimplexe}, chaque terme de cette somme est une équivalence faible. Il s'ensuit que $U_{\bullet, n}$ est une équivalence faible. Il en est donc de même de $diag (U)$.

Les raisonnements permettant de montrer que $diag (\varphi_{w})$ et $diag (\varphi_{v})$ sont des équivalences faibles sont analogues. Considérons le cas de $diag (\varphi_{w})$. Pour montrer que c'est une équivalence faible, il suffit, en vertu du lemme \ref{LemmeBisimplicial}, de montrer que, pour tout entier $m \geq 0$, le morphisme simplicial $(\varphi_{w})_{m, \bullet} : (S_{w})_{m, \bullet} \to (\NerfLaxNor{\mathdeuxcat{A}})_{m, \bullet}$ est une équivalence faible. On remarque que la source et le but de ce morphisme s'identifient à 
\begin{equation}
\coprod_{a \in (\NerfLaxNor{\mathdeuxcat{A}})_{m}} \NerfLaxNor (\OpTrancheCoLax{\mathdeuxcat{C}}{}{(\NerfLaxNor{w})(a)})
\end{equation}
et
\begin{equation}
\coprod_{a \in (\NerfLaxNor{\mathdeuxcat{A}})_{m}} \NerfLaxNor \DeuxCatPonct
\end{equation}
respectivement. Le morphisme simplicial $(\varphi_{w})_{m, \bullet}$ s'identifie ainsi à
\begin{equation}
\coprod_{a \in (\NerfLaxNor{\mathdeuxcat{A}})_{m}} \NerfLaxNor (\OpTrancheCoLax{\mathdeuxcat{C}}{}{(\NerfLaxNor{w})(a)} \to \DeuxCatPonct)
\end{equation}
Comme $\OpTrancheCoLax{\mathdeuxcat{C}}{}{(\NerfLaxNor{w})(a)}$ est asphérique en vertu du corollaire \ref{TranchesSimplexesAspheriques}, l'expression ci-dessus est une somme d'équivalences faibles, donc une équivalence faible. Ainsi, $(\varphi_{w})_{m, \bullet}$ est une équivalence faible, donc il en est de même de $diag (\varphi_{w})$. Comme annoncé, il s'ensuit que $\NerfLaxNor{u}$ est une équivalence faible. On a donc démontré le théorème \ref{ThABC}, cas relatif du théorème 2 de \cite{BC}.

\begin{theo}\label{ThABC}
Soit 
$$
\xymatrix{
\mathdeuxcat{A} 
\ar[rr]^{u}
\ar[dr]_{w}
&&\mathdeuxcat{B}
\ar[dl]^{v}
\\
&\mathdeuxcat{C}
}
$$
un triangle commutatif de \DeuxFoncteursStricts{}. Supposons que, pour tout objet $c$ de $\mathdeuxcat{C}$, le \DeuxFoncteurStrict{} $\DeuxFoncTrancheCoLax{u}{c}$ (\emph{resp.} $\DeuxFoncOpTrancheCoLax{u}{c}$, \emph{resp.} $\DeuxFoncTrancheLax{u}{c}$, \emph{resp.} $\DeuxFoncOpTrancheLax{u}{c}$) soit une équivalence faible. Alors $u$ est une é\-qui\-va\-lence faible.
\end{theo}

\begin{corollaire}\label{CorollaireThABC}
Soit $u : \mathdeuxcat{A} \to \mathdeuxcat{B}$ un \DeuxFoncteurStrict{} tel que, pour tout objet $b$ de $\mathdeuxcat{B}$, la \deux{}catégorie $\TrancheCoLax{\mathdeuxcat{A}}{u}{b}$ (\emph{resp.} $\TrancheLax{\mathdeuxcat{A}}{u}{b}$, \emph{resp.} $\OpTrancheCoLax{\mathdeuxcat{A}}{u}{b}$, \emph{resp.} $\OpTrancheLax{\mathdeuxcat{A}}{u}{b}$) soit asphérique. Alors $u$ est une é\-qui\-va\-lence faible.
\end{corollaire}

\begin{proof}
En vertu des hypothèses et du corollaire \ref{TranchesAspheriques}, les flèches orientées vers le bas dans le diagramme commutatif
$$
\xymatrix{
\TrancheCoLax{\mathdeuxcat{A}}{u}{b} 
\ar[rr]^{\DeuxFoncTrancheCoLax{u}{b}}
\ar[dr]_{}
&&\TrancheCoLax{\mathdeuxcat{B}}{}{b} 
\ar[dl]^{}
\\
&\DeuxCatPonct
}
$$
sont des équivalences faibles. La conclusion découle d'un argument de « 2 sur 3 » et du théorème \ref{ThABC}.
\end{proof}

On rappelle que, bien que cela ne soit pas la définition la plus répandue, un foncteur $u : A \to B$ est un adjoint à gauche (autrement dit, il possède un adjoint à droite) si et seulement si, pour tout objet $b$ de $B$, la catégorie $A/b$ admet un objet final. Les notions définies plus haut rendent alors la définition \ref{DefPreadjoints} naturelle.

\begin{df}\label{DefPreadjoints}
On dira qu'un \DeuxFoncteurStrict{} $u : \mathdeuxcat{A} \to \mathdeuxcat{B}$ est un \deux{}\emph{préadjoint à gauche colax} si, pour tout objet $b$ de $\mathdeuxcat{B}$, la \deux{}catégorie $\TrancheCoLax{\mathdeuxcat{A}}{u}{b}$ admet un objet admettant un objet final. 

On dira qu'un \DeuxFoncteurStrict{} $u : \mathdeuxcat{A} \to \mathdeuxcat{B}$ est un \deux{}\emph{précoadjoint à gauche lax} si $\DeuxFoncDeuxOp{u}$ est un préadjoint à gauche colax. Cette condition équivaut à la suivante : pour tout objet $b$ de $\mathdeuxcat{B}$, la \deux{}catégorie $\TrancheLax{\mathdeuxcat{A}}{u}{b}$ admet un objet admettant un objet initial. 

On dira qu'un \DeuxFoncteurStrict{} $u : \mathdeuxcat{A} \to \mathdeuxcat{B}$ est un \emph{préadjoint à droite colax} si $\DeuxFoncUnOp{u}$ est un préadjoint à gauche colax. Cette condition équivaut à la suivante : pour tout objet $b$ de $\mathdeuxcat{B}$, la \deux{}catégorie $\DeuxCatUnOp{(\OpTrancheCoLax{\mathdeuxcat{A}}{u}{b})}$ admet un objet admettant un objet final. 

On dira qu'un \DeuxFoncteurStrict{} $u : \mathdeuxcat{A} \to \mathdeuxcat{B}$ est un \emph{précoadjoint à droite lax} si $\DeuxFoncToutOp{u}$ est un préadjoint à gauche colax. Cette condition équivaut à la suivante : pour tout objet $b$ de $\mathdeuxcat{B}$, la \deux{}catégorie $\DeuxCatUnOp{(\OpTrancheLax{\mathdeuxcat{A}}{u}{b})}$ admet un objet admettant un objet initial.
\end{df}

\begin{rem}
On trouve, à la page 941 de \cite{BettiPower}, l'exacte définition de ce que nous appelons « préadjoint à droite lax », sous le nom de \DeuxFoncteurLax{}  \emph{induisant un adjoint à gauche local}. Comme, suivant les auteurs eux-mêmes, la définition que nous donnons équivaut à \cite[définition 4.1]{BettiPower}, cette dernière définition fournit une caractérisation des morphismes vérifiant la propriété universelle que nous privilégions. Toujours à la lecture de \cite{BettiPower}, nous avons appris que cette propriété universelle avait été dégagée par Bunge comme une généralisation de la notion d'extension de Kan \cite[p. 357]{Bunge}. Soulignons toutefois que \cite[définition 4.1]{BettiPower} n'est pas la définition de ce que les auteurs de \cite{BettiPower} appellent \emph{adjoint local}, notion dont la définition est \cite[définition 3.1]{BettiPower} et qui présente l'avantage d'être stable par composition. 
\end{rem} 

\begin{lemme}\label{PreadjointEquiFaible}
Tout préadjoint à gauche lax (\emph{resp.} préadjoint à gauche colax, \emph{resp.} préadjoint à droite lax, \emph{resp.} préadjoint à droite colax) est une équivalence faible. 
\end{lemme}

\begin{proof}
Cela résulte des corollaires \ref{TranchesAspheriques} et \ref{CorollaireThABC}.
\end{proof}

\begin{df}\label{DefFibre}
Soient $u : \mathdeuxcat{A} \to \mathdeuxcat{B}$ un \DeuxFoncteurStrict{} et $b$ un objet de $\mathdeuxcat{B}$. On appelle \emph{fibre de} $u$ \emph{au-dessus de} $b$ la \deux{}catégorie, que l'on notera $\Fibre{\mathdeuxcat{A}}{u}{b}$, dont les objets sont les objets $a$ de $\mathdeuxcat{A}$ tels que $u(a) = b$, dont les \un{}cellules de $a$ vers $a'$ sont les \un{}cellules $f $ de $a$ vers $a'$ dans $\mathdeuxcat{A}$ telles que $u(f) = 1_{b}$, et dont les \deux{}cellules de $f$ vers $f'$ sont les \deux{}cellules $\alpha$ de $f$ vers $f'$ telles que $u(\alpha) = 1_{1_{b}}$, les diverses compositions et unités étant héritées de celles de $\mathdeuxcat{A}$ de façon évidente.
\end{df} 

Le lemme \ref{FibresCoOp} se vérifie immédiatement.

\begin{lemme}\label{FibresCoOp}
Pour tout \DeuxFoncteurStrict{} $u : \mathdeuxcat{A} \to \mathdeuxcat{B}$ et tout objet $b$ de $\mathdeuxcat{B}$, la \deux{}catégorie $\Fibre{(\DeuxCatUnOp{\mathdeuxcat{A}})}{(\DeuxFoncUnOp{u})}{b}$ (\emph{resp.} $\Fibre{(\DeuxCatDeuxOp{\mathdeuxcat{A}})}{(\DeuxFoncDeuxOp{u})}{b}$, \emph{resp.} $\Fibre{(\DeuxCatToutOp{\mathdeuxcat{A}})}{(\DeuxFoncToutOp{u})}{b}$) s'identifie canoniquement à $\DeuxCatUnOp{(\Fibre{\mathdeuxcat{A}}{u}{b})}$ (\emph{resp.} $\DeuxCatDeuxOp{(\Fibre{\mathdeuxcat{A}}{u}{b})}$, \emph{resp.} $\DeuxCatToutOp{(\Fibre{\mathdeuxcat{A}}{u}{b})}$).
\end{lemme}

On rappelle qu'un foncteur $u : A \to B$ est une \emph{préfibration} si et seulement si, pour tout objet $b$ de $\mathdeuxcat{B}$, le foncteur canonique $A_{b} \to b \backslash A$ est un adjoint à gauche (autrement dit, il admet un adjoint à droite). Nous sommes donc naturellement amenés, par analogie, à poser la définition \ref{DefPrefibration}. 

\begin{df}\label{DefPrefibration}
On dira qu'un \DeuxFoncteurStrict{} $u : \mathdeuxcat{A} \to \mathdeuxcat{B}$ est une \deux{}\emph{préfibration} si, pour tout objet $b$ de $\mathdeuxcat{B}$, le \DeuxFoncteurStrict{} canonique 
$$
\begin{aligned}
J_{b} : \Fibre{\mathdeuxcat{A}}{u}{b} &\to \OpTrancheCoLax{\mathdeuxcat{A}}{u}{b}
\\
a &\mapsto (a, 1_{b})
\\
f &\mapsto (f, 1_{1_{b}})
\\
\alpha &\mapsto \alpha
\end{aligned}
$$
est un préadjoint à gauche lax.

On dira qu'un  \DeuxFoncteurStrict{} $u : \mathdeuxcat{A} \to \mathdeuxcat{B}$ est une \deux{}\emph{préopfibration} si $\DeuxFoncUnOp{u}$ est une préfibration, autrement dit si, pour tout objet $b$ de $\mathdeuxcat{B}$, le morphisme canonique $\Fibre{\mathdeuxcat{A}}{u}{b} \to \TrancheCoLax{\mathdeuxcat{A}}{u}{b}$ est un préadjoint à droite lax.

On dira qu'un  \DeuxFoncteurStrict{} $u : \mathdeuxcat{A} \to \mathdeuxcat{B}$ est une \deux{}\emph{précofibration} si $\DeuxFoncDeuxOp{u}$ est une préfibration, autrement dit si, pour tout objet $b$ de $\mathdeuxcat{B}$, le morphisme canonique $\Fibre{\mathdeuxcat{A}}{u}{b} \to \OpTrancheLax{\mathdeuxcat{A}}{u}{b}$ est un préadjoint à gauche colax. 

On dira qu'un  \DeuxFoncteurStrict{} $u : \mathdeuxcat{A} \to \mathdeuxcat{B}$ est une \deux{}\emph{précoopfibration} si $\DeuxFoncToutOp{u}$ est une préfibration, autrement dit si, pour tout objet $b$ de $\mathdeuxcat{B}$, le morphisme canonique $\Fibre{\mathdeuxcat{A}}{u}{b} \to \TrancheLax{\mathdeuxcat{A}}{u}{b}$ est un préadjoint à droite colax.   
\end{df}

\begin{exemple}\label{ProjectionPrefibration}
Des calculs quelque peu pénibles mais sans difficulté permettent de vérifier que toute projection est une préfibration (et donc également une préopfibration, une précofibration et une précoopfibration).
\end{exemple}

\begin{lemme}\label{PrefibrationFibresAsph}
Toute préfibration (\emph{resp.} préopfibration, \emph{resp.} précofibration, \emph{resp.} précoopfibration) à fibres asphériques est une équivalence faible.
\end{lemme}

\begin{proof}
Cela résulte du lemme \ref{PreadjointEquiFaible} et du corollaire \ref{CorollaireThABC}. 
\end{proof}

\section{Intégration des 2-foncteurs}\label{SectionIntegration}

\begin{paragr}
À tout \DeuxFoncteurStrict{} $F : \mathdeuxcat{A} \to \DeuxCatDeuxCat$, on associe une \deux{}catégorie $\DeuxInt{F}$ comme suit. 

Les objets de $\DeuxInt{F}$ sont les couples $(a, x)$, avec $a$ objet de $\mathdeuxcat{A}$ et $x$ objet de la \deux{}catégorie $F(a)$. 

Les \un{}cellules de $(a, x)$ vers $(a', x')$ dans $\DeuxInt{F}$ sont les couples
$$
\left(f : a \to a', r : F(f)(x) \to x'\right)
$$
dans lesquels $f$ est une \un{}cellule de $\mathdeuxcat{A}$ et $r$ une \un{}cellule de $F(a')$. 

Les \deux{}cellules de $(f : a \to a', r : F(f)(x) \to x')$ vers $(g : a \to a', s : F(g)(x) \to x')$ dans $\DeuxInt{F}$ sont les couples
$$
(\gamma : f \Rightarrow g, \varphi : r \Rightarrow s (F(\gamma))_{x})
$$
dans lesquels $\gamma$ est une \deux{}cellule dans $\mathdeuxcat{A}$ et $\varphi$ une \deux{}cellule dans $F(a')$.

L'identité d'un objet est donnée par la formule
$$
1_{(a, x)} = (1_{a}, 1_{x})
$$

La composée des \un{}cellules est donnée par la formule
$$
(f', r') (f, r) = (f'f, r' F(f')(r))
$$ 

L'identité d'une \un{}cellule est donnée par la formule
$$
1_{(f, r)} = (1_{f}, 1_{r})
$$

La composée verticale des \deux{}cellules est donnée par la formule
$$
(\delta, \psi) \CompDeuxUn (\gamma, \varphi) = (\delta \CompDeuxUn \gamma, (\psi \CompDeuxZero (F(\gamma))_{x}) \CompDeuxUn \varphi)
$$

La composée horizontale des \deux{}cellules est donnée par la formule
$$
(\gamma', \varphi') \CompDeuxZero (\gamma, \varphi) = (\gamma' \CompDeuxZero \gamma, \varphi' \CompDeuxZero F(f') (\varphi))
$$

On vérifie sans difficulté que cela définit bien une \deux{}catégorie.
\end{paragr}

Sous les mêmes hypothèses, l'application évidente
$$
\begin{aligned}
\DeuxInt{F} &\to \mathdeuxcat{A}
\\
(a,x) &\mapsto a
\\
(f,r) &\mapsto f
\\
(\gamma, \varphi) &\mapsto \gamma
\end{aligned}
$$ 
définit un \DeuxFoncteurStrict{} $P_{F} : \DeuxInt{F} \to \mathdeuxcat{A}$.

\begin{prop}\label{ProjIntCoOpFib}
Pour toute \deux{}catégorie $\mathdeuxcat{A}$ et tout \DeuxFoncteurStrict{}  $F : \mathdeuxcat{A} \to \DeuxCatDeuxCat$, la projection associée à l'intégrale de $F$
$$
P_{F} : \DeuxInt{F} \to \mathdeuxcat{A}
$$ 
est une précoopfibration.
\end{prop}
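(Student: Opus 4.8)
La strat\'egie consiste \`a revenir \`a la d\'efinition \ref{DefPrefibration} : il s'agit de v\'erifier que, pour tout objet $a$ de $\mathdeuxcat{A}$, le \DeuxFoncteurStrict{} canonique
$$
K_{a} : \Fibre{(\DeuxInt{F})}{P_{F}}{a} \to \CommaLax{(\DeuxInt{F})}{P_{F}}{a}
$$
est un \deux{}pr\'eadjoint \`a droite. On commence par identifier la fibre : la stricte fonctorialit\'e de $F$ donne $F(1_{a}) = 1_{F(a)}$ et $(F(1_{1_{a}}))_{x} = 1_{x}$, de sorte que $\Fibre{(\DeuxInt{F})}{P_{F}}{a}$ s'identifie canoniquement \`a la \deux{}cat\'egorie $F(a)$ (un objet $(a,x)$ correspondant \`a $x$, une \un{}cellule $(1_{a}, r)$ \`a $r$, une \deux{}cellule $(1_{1_{a}}, \varphi)$ \`a $\varphi$). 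Sous cette identification, $K_{a}$ envoie $x$ sur l'objet $((a,x), 1_{a})$.

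D'apr\`es la d\'efinition \ref{DefPreadjoints}, il suffit alors de montrer que, pour tout objet $((b,y), p : b \to a)$ de $\CommaLax{(\DeuxInt{F})}{P_{F}}{a}$, la \deux{}cat\'egorie $\mathdeuxcat{D} = \OpCommaCoLax{F(a)}{K_{a}}{((b,y),p)}$ admet un objet homotopiquement initial. Le candidat naturel, dict\'e par la construction de Grothendieck, est le couple $(z_{0}, q_{0})$ o\`u $z_{0} = F(p)(y)$ est l'objet de $F(a)$ obtenu en poussant $y$ le long de $p$, et o\`u $q_{0} : ((b,y),p) \to ((a, z_{0}), 1_{a})$ est la \un{}cellule de $\CommaLax{(\DeuxInt{F})}{P_{F}}{a}$ donn\'ee par la \un{}cellule \og cocart\'esienne\fg{} $(p, 1_{z_{0}}) : (b,y) \to (a, z_{0})$ de $\DeuxInt{F}$ et par la \deux{}cellule identit\'e $1_{p}$.

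Il reste \`a v\'erifier l'initialit\'e homotopique de $(z_{0}, q_{0})$, c'est-\`a-dire (d\'efinition \ref{DefOF2}) que, pour tout objet $(z, q)$ de $\mathdeuxcat{D}$ avec $q = ((f, r), \alpha)$, la cat\'egorie $\CatHom{\mathdeuxcat{D}}{(z_{0}, q_{0})}{(z,q)}$ admet un objet final. Le calcul de la compos\'ee $K_{a}(g)\, q_{0}$ montre qu'une \un{}cellule $(z_{0}, q_{0}) \to (z, q)$ se r\'eduit \`a la donn\'ee d'une \un{}cellule $g : z_{0} \to z$ et d'une \deux{}cellule $\varphi : g \Rightarrow r (F(\alpha))_{y}$ dans $F(a)$, la condition de compatibilit\'e d\'efinissant les \deux{}cellules de la \deux{}cat\'egorie comma for\c{c}ant la composante dans $\mathdeuxcat{A}$ \`a valoir $\alpha$. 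La cat\'egorie $\CatHom{\mathdeuxcat{D}}{(z_{0}, q_{0})}{(z,q)}$ s'identifie ainsi \`a la cat\'egorie \og tranche\fg{} $\CatHom{F(a)}{z_{0}}{z}/r(F(\alpha))_{y}$, laquelle poss\`ede toujours un objet final, \`a savoir l'identit\'e de $r(F(\alpha))_{y}$. Ceci \'etablit que $K_{a}$ est un \deux{}pr\'eadjoint \`a droite pour tout $a$, donc que $P_{F}$ est une \deux{}pr\'ecoopfibration.

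Le principal obstacle est d'ordre calculatoire : il faut manipuler avec soin les \deux{}cellules structurales (celles de $F$, celles issues des compositions horizontale et verticale dans $\DeuxInt{F}$, et celles intervenant dans les \deux{}cat\'egories comma) afin d'\'etablir que la condition portant sur les \deux{}cellules se r\'eduit exactement \`a la relation d\'efinissant une cat\'egorie tranche. Ce sont pr\'ecis\'ement les \og calculs quelque peu p\'enibles mais sans difficult\'e\fg{} d\'ej\`a mentionn\'es \`a l'exemple \ref{ProjectionPrefibration}, dont le pr\'esent \'enonc\'e g\'en\'eralise le contenu.
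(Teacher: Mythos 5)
Votre d\'emonstration est correcte et suit essentiellement la m\^eme d\'emarche que celle de l'article : m\^eme r\'eduction au fait que le morphisme canonique de la fibre vers la \deux{}cat\'egorie tranche est un \deux{}pr\'eadjoint \`a droite, et m\^eme objet homotopiquement initial, \`a savoir $((a, F(p)(y)), ((p, 1_{F(p)(y)}), 1_{p}))$. La seule diff\'erence, purement r\'edactionnelle, est que vous identifiez d'embl\'ee la fibre \`a $F(a)$ et reconnaissez les cat\'egories de morphismes comme des cat\'egories tranches (\`a objet final \'evident), l\`a o\`u l'article exhibe explicitement la \un{}cellule distingu\'ee et v\'erifie \`a la main l'existence et l'unicit\'e des \deux{}cellules vers icelle.
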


\begin{proof}
Soit $a$ un objet de $\mathdeuxcat{A}$. Décrivons les objets, \un{}cellules et \deux{}cellules de la \deux{}catégorie $\TrancheLax{\DeuxInt{F}}{P_{F}}{a}$. (On laisse au lecteur le soin d'expliciter les détails si besoin est.)

Les objets en sont les $((a', x'), p : a' \to a)$.

Les \un{}cellules de $((a', x'), p : a' \to a)$ vers $((a'', x''), p' : a'' \to a)$ sont les $((f : a' \to a'', r : F(f)(x') \to x''), \sigma : p \Rightarrow p'f)$.

Les \deux{}cellules de $((f : a' \to a'', r : F(f)(x') \to x''), \sigma : p \Rightarrow p'f)$ vers $((g : a' \to a'', s : F(g)(x') \to x''), \sigma' : p \Rightarrow p'g)$ sont les $(\gamma : f \Rightarrow g, \varphi : r \Rightarrow s (F(\gamma))_{x'})$ tels que $(p' \CompDeuxZero \gamma) \CompDeuxUn \sigma = \sigma'$. 

Considérons le \DeuxFoncteurStrict{}
$$
\begin{aligned}
J_{a} : \Fibre{\left(\DeuxInt{F}\right)}{P_{F}}{a} &\to \TrancheLax{\left(\DeuxInt{F}\right)}{P_{F}}{a}
\\
(a, x) &\mapsto ((a, x), 1_{a})
\\
(1_{a}, r) &\mapsto ((1_{a}, r), 1_{1_{a}})
\\
(1_{1_{a}}, \varphi) &\mapsto (1_{1_{a}}, \varphi)
\end{aligned}
$$ 

Soit $((a', x'), p : a' \to a)$ un objet de $\TrancheLax{(\DeuxInt{F})}{P_{F}}{a}$. Ainsi, $a$, $a'$, $x'$ et $p$ sont désormais fixés. Décrivons les objets, \un{}cellules et \deux{}cellules de la \deux{}catégorie $\OpTrancheCoLax{\Fibre{(\DeuxInt{F})}{P_{F}}{a}}{J_{a}}{((a', x'), p)}$. (Comme ci-dessus, on laisse au lecteur le soin d'expliciter les détails si besoin est.)

Les objets en sont les $((a, x), ((q : a' \to a, r : F(q)(x') \to x), \sigma : p \Rightarrow q))$.

Les \un{}cellules de $((a, x), ((q : a' \to a, r : F(q)(x') \to x), \sigma : p \Rightarrow q))$ vers $((a, x''), ((q' : a' \to a, r' : F(q')(x') \to x''), \sigma' : p \Rightarrow q'))$ sont les $(s : x \to x'', (\gamma : q \Rightarrow q', \varphi : sr \Rightarrow r' (F(\gamma))_{x'}))$ tels que $\gamma \CompDeuxUn \sigma = \sigma'$. 

Les \deux{}cellules de $(s : x \to x'', (\gamma : q \Rightarrow q', \varphi : sr \Rightarrow r' (F(\gamma))_{x'}))$ (vérifiant $\gamma \CompDeuxUn \sigma = \sigma'$) vers $(t : x \to x'', (\mu : q \Rightarrow q', \psi : tr \Rightarrow r' (F(\mu))_{x'}))$ (vérifiant $\mu \CompDeuxUn \sigma = \sigma'$) correspondent aux \deux{}cellules $\tau : s \Rightarrow t$ telles que $\mu = \gamma$ et $\psi \CompDeuxUn (\tau \CompDeuxZero r) = \varphi$. (L'égalité $\gamma = \mu$ est donc une condition nécessaire à l'existence d'une telle \deux{}cellule.) 

Dans la \deux{}catégorie $\OpTrancheCoLax{\Fibre{(\DeuxInt{F})}{P_{F}}{a}}{J_{a}}{((a', x'), p)}$, on distingue l'objet 
$$
((a, F(p)(x')) ((p, 1_{F(p)(x')}), 1_{p}))
$$ 
Soit $((a, x''), ((q' : a' \to a, r' : F(q')(x') \to x''), \sigma' : p \Rightarrow q'))$ un objet quelconque de la \deux{}catégorie $\OpTrancheCoLax{\Fibre{(\DeuxInt{F})}{P_{F}}{a}}{J_{a}}{((a', x'), p)}$. On distingue la \un{}cellule $(r' (F(\sigma'))_{x'}, (\sigma', 1_{r' (F(\sigma'))_{x'}}))$ de $((a, F(p)(x')), ((p, 1_{F(p)(x')}), 1_{p}))$ vers $((a, x''), ((q', r'), \sigma'))$. En effet, la condition à vérifier n'est autre que $\sigma' \CompDeuxUn 1_{p} = \sigma'$, qui est trivialement vérifiée. 

Soit $(s : F(p)(x') \to x'', (\gamma : p \Rightarrow q', \varphi : s \Rightarrow r' (F(\sigma'))_{x'}))$ une \un{}cellule quelconque de $((a, F(p)(x')), ((p, 1_{F(p)(x')}), 1_{p}))$ vers $((a, x''), ((q', r'), \sigma'))$. On a donc en particulier $\gamma = \sigma'$. Les \deux{}cellules de $(s, (\gamma, \varphi))$ vers $(r' (F(\sigma'))_{x'}, (\sigma', 1_{r' (F(\sigma'))_{x'}}))$ sont les \deux{}cellules $\tau : s \Rightarrow r' (F(\sigma'))_{x'}$ telles que $\gamma = \sigma'$ (égalité déjà vérifiée par hypothèse) et $1_{r' (F(\sigma'))_{x'}} (\tau \CompDeuxZero 1_{F(p)(x')}) = \varphi$, c'est-à-dire $\tau = \varphi$. Il est clair que ces conditions impliquent l'existence et l'unicité d'une telle \deux{}cellule, à savoir $\varphi$. 

Ainsi, la \deux{}catégorie $\DeuxCatUnOp{\OpTrancheCoLax{\Fibre{(\DeuxInt{F})}{P_{F}}{a}}{J_{a}}{((a', x'), p)}}$ admet un objet admettant un objet final. Par définition, le résultat suit.
\end{proof}

\begin{paragr}
Étant donné deux \DeuxFoncteursStricts{} $u$ et $v$ d'une \deux{}catégorie $\mathdeuxcat{A}$ vers $\DeuxCatDeuxCat$ et une \DeuxTransformationStricte{} $\sigma$ de $u$ vers $v$, on obtient un \DeuxFoncteurStrict{} $\DeuxInt{\sigma} : \DeuxInt{u} \to \DeuxInt{v}$ en posant :
$$
\begin{aligned}
\DeuxInt{\sigma} : \DeuxInt{u} &\to \DeuxInt{v}
\\
(a, x) &\mapsto (a, \sigma_{a}(x))
\\
(f, r) &\mapsto (f, \sigma_{a'}(r))
\\
(\gamma, \varphi) &\mapsto (\gamma, \sigma_{a'} (\varphi))
\end{aligned}
$$
\end{paragr}

\begin{rem}
Soient $\mathdeuxcat{A}$ une \deux{}catégorie, $u$ et $v$ des \DeuxFoncteursStricts{} de $\mathdeuxcat{A}$ vers $\DeuxCatDeuxCat$, et $\sigma$ une \DeuxTransformationStricte{} de $u$ vers $v$. Alors, le diagramme de \DeuxFoncteursStricts{}
$$
\xymatrix{
\DeuxInt{u}
\ar[rr]^{\DeuxInt{\sigma}}
\ar[dr]_{P_{u}}
&&\DeuxInt{v}
\ar[dl]^{P_{v}}
\\
&\mathdeuxcat{A}
}
$$
dans lequel les flèches diagonales orientées vers le bas désignent les projections canoniques, est commutatif. On a donc notamment, pour tout objet $a$ de $\mathdeuxcat{A}$, un \DeuxFoncteurStrict{} 
$$
\DeuxFoncTrancheLax{\left(\DeuxInt{\sigma}\right)}{a} : \TrancheLax{\left(\DeuxInt{u}\right)}{P_{u}}{a} \to \TrancheLax{\left(\DeuxInt{v}\right)}{P_{v}}{a}
$$
\end{rem}

Les lemmes \ref{KikiDeMontparnasse} et \ref{PierreDac} sont immédiats. 

\begin{lemme}\label{KikiDeMontparnasse}
Soient $\mathdeuxcat{A}$ une \deux{}catégorie, $u$ et $v$ des \DeuxFoncteursStricts{} de $\mathdeuxcat{A}$ vers $\DeuxCatDeuxCat$, $\sigma$ une \DeuxTransformationStricte{} de $u$ vers $v$ et $a$ un objet de $\mathdeuxcat{A}$. Il existe alors un diagramme commutatif de \DeuxFoncteursStricts{}
$$
\xymatrix{
u(a)
\ar[r]^{\sigma_{a}}
\ar[d]
&v(a)
\ar[d]
\\
\Fibre{(\DeuxInt{u})}{P_{u}}{a}
\ar[r]_{(\DeuxInt{\sigma})_{a}}
&\Fibre{(\DeuxInt{v})}{P_{v}}{a}
}
$$
dont les flèches verticales sont des isomorphismes.
\end{lemme}

\begin{lemme}\label{PierreDac}
Soient $\mathdeuxcat{A}$ une \deux{}catégorie, $u$ et $v$ des \DeuxFoncteursStricts{} de $\mathdeuxcat{A}$ vers $\DeuxCatDeuxCat$ et $\sigma$ une \DeuxTransformationStricte{} de $u$ vers $v$. Alors, pour tout objet $a$ de $\mathdeuxcat{A}$, le diagramme\footnote{Dans lequel on a commis l'abus de noter de la même façon les deux flèches verticales et dans lequel $(\DeuxInt{\sigma})_{a}$ désigne le \DeuxFoncteurStrict{} induit entre les fibres.} de \DeuxFoncteursStricts{}
$$
\xymatrix{
\Fibre{(\DeuxInt{u})}{P_{u}}{a}
\ar[rr]^{(\DeuxInt{\sigma})_{a}}
\ar[d]_{J_{a}}
&&\Fibre{(\DeuxInt{v})}{P_{v}}{a}
\ar[d]^{J_{a}}
\\
\TrancheLax{(\DeuxInt{u})}{P_{u}}{a}
\ar[rr]_{\DeuxFoncTrancheLax{(\DeuxInt{\sigma})}{a}}
&&\TrancheLax{(\DeuxInt{v})}{P_{v}}{a}
}
$$
est commutatif. 
\end{lemme}

\begin{lemme}\label{LeoCampion}
Soient $\mathdeuxcat{A}$ une \deux{}catégorie, $u$ et $v$ des \DeuxFoncteursStricts{} de $\mathdeuxcat{A}$ vers $\DeuxCatDeuxCat$, et $\sigma$ une \DeuxTransformationStricte{} de $u$ vers $v$. Supposons que, pour tout objet $a$ de $\mathdeuxcat{A}$, le \DeuxFoncteurStrict{} $\sigma_{a} : u(a) \to v(a)$ soit une équivalence faible. Alors, le \DeuxFoncteurStrict{} $\DeuxInt{\sigma} : \DeuxInt{u} \to \DeuxInt{v}$ est une équivalence faible.
\end{lemme}

\begin{proof}
Comme $P_{u}$ et $P_{v}$ sont des précoopfibrations (proposition \ref{ProjIntCoOpFib}), les flèches verticales du diagramme ci-dessus sont des préadjoints à droite colax (par définition), donc des équivalences faibles (lemme \ref{PreadjointEquiFaible}). En vertu des lemmes \ref{KikiDeMontparnasse} et \ref{PierreDac} et de la saturation faible de $\DeuxLocFond{W}$, les hypothèses impliquent donc que le \DeuxFoncteurStrict{} $\DeuxFoncTrancheLax{(\DeuxInt{\sigma})}{a}$ est une équivalence faible pour tout objet $a$ de $\mathdeuxcat{A}$, d'où le résultat en vertu du théorème \ref{ThABC}.  
\end{proof}

\begin{paragr}
Soit $u : \mathdeuxcat{A} \to \mathdeuxcat{B}$ un \DeuxFoncteurLax{}. On lui associe un \DeuxFoncteurStrict{} $\DeuxFoncteurTranche{u} : \mathdeuxcat{B} \to \DeuxCatDeuxCat$ comme suit.  

Pour tout objet $b$ de $\mathdeuxcat{B}$, on pose $\DeuxFoncteurTranche{u}(b) = \TrancheLax{\mathdeuxcat{A}}{u}{b}$. 

Soit $f : b \to b'$ une \un{}cellule de $\mathdeuxcat{B}$. On définit le \DeuxFoncteurStrict{} $\DeuxFoncteurTranche{u}(f)$ par
$$
\begin{aligned}
\DeuxFoncteurTranche{u}(f) : \TrancheLax{\mathdeuxcat{A}}{u}{b} &\to \TrancheLax{\mathdeuxcat{A}}{u}{b'}
\\
(a, p : u(a) \to b) &\mapsto (a, fp : u(a) \to b \to b')
\\
(g : a \to a', \alpha : p \Rightarrow p' u(g)) &\mapsto (g : a \to a', f \CompDeuxZero \alpha : fp \Rightarrow f p' u(g))
\\
\beta &\mapsto \beta
\end{aligned}
$$ 

Soient $f$ et $f'$ deux \un{}cellules de $b$ vers $b'$ et $\gamma : f \Rightarrow f'$ une \deux{}cellule dans $\mathdeuxcat{B}$. On définit une \DeuxTransformationStricte{} $\DeuxFoncteurTranche{u}(\gamma) : \DeuxFoncteurTranche{u}(f) \Rightarrow \DeuxFoncteurTranche{u}(f')$ en posant
$$
(\DeuxFoncteurTranche{u}(\gamma))_{(a,p)} = (1_{a}, \gamma \CompDeuxZero p \CompDeuxZero u_{a})
$$

Les diverses conditions de cohérence se vérifient sans difficulté.
\end{paragr}

\begin{paragr}
Soit 
$$
\xymatrix{
\mathdeuxcat{A}  
\ar[rr]^{u} 
\ar[dr]_{w} 
&&\mathdeuxcat{B}
\dtwocell<\omit>{<7.3>\sigma} 
\ar[dl]^{v}
\\ 
& 
\mathdeuxcat{C}
&{}
}
$$
un diagramme dans lequel $u$, $v$ et $w$ sont des \DeuxFoncteursStricts{} et $\sigma$ est une \DeuxTransformationOpLax{}. Ces données nous fournissent notamment, par le procédé décrit ci-dessus, des \DeuxFoncteursStricts{} $\DeuxFoncteurTranche{w}$ et $\DeuxFoncteurTranche{v}$ de $\mathdeuxcat{C}$ vers $\DeuxCatDeuxCat$. On définit une \DeuxTransformationStricte{} $\DeuxFoncteurTranche{\sigma} :\DeuxFoncteurTranche{w} \Rightarrow \DeuxFoncteurTranche{v}$ en posant 
$$
(\DeuxFoncteurTranche{\sigma})_{c} = \DeuxFoncTrancheLaxCoq{u}{\sigma}{c}
$$
pour tout objet $c$ de $\mathdeuxcat{C}$. Ici encore, la vérification des conditions de cohérence, bien que fastidieuse, ne présente guère de difficulté. \footnote{Il s'agit toujours d'un cas particulier d'une construction relative aux « \deux{}catégories commas » ; voir \cite[section 1.9.5]{TheseMoi}.}
\end{paragr}

\begin{paragr}
Soit $w : \mathdeuxcat{A} \to \mathdeuxcat{C}$ un  \DeuxFoncteurStrict{}. En vertu des résultats généraux déjà dégagés sur l'intégration, la projection canonique\footnote{Que nous n'appelons pas $P_{\DeuxFoncteurTranche{w}}$ par commodité.} $P_{\mathdeuxcat{C}} : \DeuxInt{\DeuxFoncteurTranche{w}} \to \mathdeuxcat{C}$ est une précoopfibration. On se propose d'expliciter la structure de $\DeuxInt{\DeuxFoncteurTranche{w}}$ et de vérifier qu'il existe également une projection canonique sur $\mathdeuxcat{A}$, que l'on notera $Q_{\mathdeuxcat{A}} : \DeuxInt{\DeuxFoncteurTranche{w}} \to \mathdeuxcat{A}$, qui est une préfibration. Autrement dit, $\DeuxInt{\DeuxFoncteurTranche{w}}$ est non seulement une \deux{}catégorie précoopfibrée sur $\mathdeuxcat{C}$, mais c'est également une \deux{}catégorie préfibrée sur $\mathdeuxcat{A}$. Au cours des descriptions suivantes, on omet quelques détails que le lecteur rétablira de lui-même s'il le souhaite. 

Les objets de $\DeuxInt{\DeuxFoncteurTranche{w}}$ sont de la forme $(c, (a, p : w(a) \to c))$.

Les \un{}cellules de $(c, (a, p))$ vers $(c', (a', p'))$ sont de la forme $(k : c \to c', (f : a \to a', \gamma : kp \Rightarrow p' w(f)))$.

Les \deux{}cellules de $(k : c \to c', (f : a \to a', \gamma : kp \Rightarrow p' w(f)))$ vers $(l : c \to c', (g : a \to a', \delta : lp \Rightarrow p' w(g)))$ sont de la forme $(\epsilon : k \Rightarrow l, \varphi : f \Rightarrow g)$ et tels que $(p' \CompDeuxZero w(\varphi)) \CompDeuxUn \gamma = \delta \CompDeuxUn (\epsilon \CompDeuxZero p)$.

L'identité de l'objet $(c, (a, p))$ est donnée par $(1_{c}, (1_{a}, 1_{p}))$. 

La composée de \un{}cellules 
$
(k', (f', \gamma')) (k, (f, \gamma))
$, quand elle fait sens, est donnée par la formule 
$$(k', (f', \gamma')) (k, (f, \gamma)) = (k'k, f'f, (\gamma' \CompDeuxZero w(f)) (k' \CompDeuxZero \gamma))$$

L'identité de la \un{}cellule $(k, (f, \gamma))$ est donnée par $1_{(k, (f, \gamma))} = (1_{k}, 1_{f})$.

Étant donné deux \deux{}cellules $(\lambda, \psi)$ et $(\epsilon, \varphi)$ telles que la composée verticale $(\lambda, \psi) (\epsilon, \varphi)$ fasse sens, cette dernière est donnée par la formule
$$
(\lambda, \psi) (\epsilon, \varphi) = (\lambda \epsilon, \psi \varphi)
$$

Étant donné deux \deux{}cellules $(\epsilon, \varphi)$ et $(\epsilon', \varphi')$ telles que la composée horizontale $(\epsilon', \varphi') \CompDeuxZero (\epsilon, \varphi)$ fasse sens, cette dernière est donnée par la formule
$$
(\epsilon', \varphi') \CompDeuxZero (\epsilon, \varphi) = (\epsilon' \CompDeuxZero \epsilon, \varphi' \CompDeuxZero \varphi)
$$

Cela termine la description de la \deux{}catégorie $\DeuxInt{\DeuxFoncteurTranche{w}}$. On définit un \DeuxFoncteurStrict{} $Q_{\mathdeuxcat{A}} : \DeuxInt{\DeuxFoncteurTranche{w}} \to \mathdeuxcat{A}$ par
$$
\begin{aligned}
Q_{\mathdeuxcat{A}} : \DeuxInt{\DeuxFoncteurTranche{w}} &\to \mathdeuxcat{A}
\\
(c, (a, p)) &\mapsto a
\\
(k, (f, \gamma)) &\mapsto f
\\
(\epsilon, \varphi) &\mapsto \varphi
\end{aligned}
$$
\end{paragr}

\begin{lemme}\label{QPrefibration}
Le \DeuxFoncteurStrict{} $Q_{\mathdeuxcat{A}} : \DeuxInt{\DeuxFoncteurTranche{w}} \to \mathdeuxcat{A}$ défini ci-dessus est une préfibration. 
\end{lemme}

\begin{proof}
Le résultat se vérifie directement par des calculs analogues à ceux figurant dans la dé\-monstra\-tion de la proposition \ref{ProjIntCoOpFib} (voir aussi la remarque \ref{RemDual}). Les détails sont laissés au lecteur. 
\end{proof}

\begin{rem}\label{RemDual}
Dans \cite{TheseMoi}, on montre le lemme \ref{QPrefibration} par un argument de dualité. Cela nécessite toutefois le développement quelque peu fastidieux de notions duales à celle de l'intégration, ce que nous avons estimé préférable d'épargner au lecteur.
\end{rem}

\begin{lemme}\label{FibreQ}
Pour tout objet $a$ de $\mathdeuxcat{A}$, la fibre $\Fibre{(\DeuxInt{\DeuxFoncteurTranche{w}})}{Q_{\mathdeuxcat{A}}}{a}$ est canoniquement isomorphe à $\OpTrancheCoLax{\mathdeuxcat{C}}{}{w(a)}$.
\end{lemme}

\begin{proof}
C'est immédiat. 
\end{proof}

\begin{corollaire}\label{QEquiFaible}
Le \DeuxFoncteurStrict{} $Q_{\mathdeuxcat{A}}$ est une équivalence faible.
\end{corollaire}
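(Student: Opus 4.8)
Le plan est d'enchaîner directement les lemmes qui précèdent, sans calcul supplémentaire. D'abord, le lemme \ref{QPrefibration} fournit le fait essentiel : le \DeuxFoncteurStrict{} $Q_{\mathdeuxcat{A}} : \DeuxInt{\DeuxFoncteurTranche{w}} \to \mathdeuxcat{A}$ est une \deux{}préfibration. Il ne reste donc, en vue d'appliquer le lemme \ref{PrefibrationFibresAsph}, qu'à établir l'asphéricité de ses fibres.

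Or le lemme \ref{FibreQ} identifie canoniquement, pour tout objet $a$ de $\mathdeuxcat{A}$, la fibre $\Fibre{(\DeuxInt{\DeuxFoncteurTranche{w}})}{Q_{\mathdeuxcat{A}}}{a}$ à la \deux{}catégorie $\OpCommaCoLax{\mathdeuxcat{C}}{}{w(a)}$. D'après l'exemple \ref{ExemplesOF}, cette dernière admet un objet homotopiquement initial ; le lemme \ref{OFAspherique} permet alors d'affirmer qu'elle est aspérique. Les fibres de $Q_{\mathdeuxcat{A}}$ étant ainsi toutes aspériques, le lemme \ref{PrefibrationFibresAsph} entraînerait immédiatement que $Q_{\mathdeuxcat{A}}$ est une équivalence faible.

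Il n'y a pas ici d'obstacle véritable : toute la difficulté a été concentrée en amont, dans la démonstration (passablement calculatoire) du lemme \ref{QPrefibration} établissant le caractère de \deux{}préfibration de $Q_{\mathdeuxcat{A}}$, ainsi que dans l'identification des fibres opérée au lemme \ref{FibreQ}. La présente déduction n'est qu'une composition formelle de ces résultats avec les propriétés homotopiques générales des \deux{}catégories admettant un objet homotopiquement initial.
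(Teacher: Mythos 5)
Votre d\'emonstration est correcte et reprend exactement l'argument du texte : le caract\`ere de \deux{}pr\'efibration de $Q_{\mathdeuxcat{A}}$ (lemme \ref{QPrefibration}), l'identification des fibres fournie par le lemme \ref{FibreQ}, leur asph\'ericit\'e obtenue en combinant l'exemple \ref{ExemplesOF} et le lemme \ref{OFAspherique}, puis la conclusion par le lemme \ref{PrefibrationFibresAsph}. Il n'y a rien \`a modifier.
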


\begin{proof}
En vertu du lemme \ref{FibreQ}, de l'exemple \ref{ExemplesOF} et du lemme \ref{OFAspherique}, les fibres de $Q_{\mathdeuxcat{A}}$ sont asphériques. Le \DeuxFoncteurStrict{} $Q_{\mathdeuxcat{A}}$ est donc une préfibration (proposition \ref{QPrefibration}) à fibres asphériques, donc une équivalence faible (lemme \ref{PrefibrationFibresAsph}). 
\end{proof}

\section{Le cas strict}\label{SectionCasStrict}

\begin{theo}\label{ThAStrictCoq}
Soit 
$$
\xymatrix{
\mathdeuxcat{A}  
\ar[rr]^{u} 
\ar[dr]_{w} 
&&\mathdeuxcat{B}
\dtwocell<\omit>{<7.3>\sigma} 
\ar[dl]^{v}
\\ 
& 
\mathdeuxcat{C}
&{}
}
$$
un diagramme dans lequel $u$, $v$ et $w$ sont des \DeuxFoncteursStricts{} et $\sigma$ est une \DeuxTransformationStricte{}. Si, pour tout objet $c$ de $\mathdeuxcat{C}$, le \DeuxFoncteurStrict{} $\DeuxFoncTrancheLaxCoq{u}{\sigma}{c} : \TrancheLax{\mathdeuxcat{A}}{w}{c} \to \TrancheLax{\mathdeuxcat{B}}{v}{c}$ est une équivalence faible, alors $u$ est une équivalence faible.
\end{theo}

\begin{proof}
On vérifie sans difficulté la commutativité du diagramme de \DeuxFoncteursStricts{}
$$
\xymatrix{
\DeuxInt{\DeuxFoncteurTranche{w}}
\ar[rr]^{\DeuxInt{\DeuxFoncteurTranche{\sigma}}}
\ar[d]_{Q_{\mathdeuxcat{A}}}
&&\DeuxInt{\DeuxFoncteurTranche{v}}
\ar[d]^{Q_{\mathdeuxcat{B}}}
\\
\mathdeuxcat{A}
\ar[rr]_{u}
&&\mathdeuxcat{B}
}
$$
En vertu des hypothèses et du lemme \ref{LeoCampion}, le \DeuxFoncteurStrict{} $\DeuxInt{\DeuxFoncteurTranche{\sigma}}$ est une équivalence faible. Comme $Q_{\mathdeuxcat{A}}$ et $Q_{\mathdeuxcat{B}}$ sont des équivalences faibles en vertu du lemme \ref{QEquiFaible}, la saturation faible de $\DeuxLocFond{W}$ permet de conclure.
\end{proof}

\section{L'adjonction de Bénabou}\label{SectionAdjonctionBenabou}

Cette section a pour objet l'étude des premières propriétés homotopiques d'un adjoint à gauche de l'inclusion $\DeuxCat \hookrightarrow \DeuxCatLax$. La découverte de cette adjonction, dans le cadre plus général des bicatégories, revient à Bénabou, qui semble malheureusement n'avoir rien publié à ce sujet. Le cas particulier de cette construction appliquée aux \deux{}catégories est abordé par Gray dans \cite[I, 4. 23, p. 9]{Gray}. Bien qu'il s'agisse d'une notion « folklorique », sa description concrète semble assez méconnue. Un traitement algébrique détaillé de cette construction se trouve dans \cite[section 1.12]{TheseMoi}. Nous la présentons brièvement — et quelque peu « grossièrement » — ici pour les besoins du présent travail. 

\begin{df}\label{DefTilde}
Étant donné une \deux{}catégorie $\mathdeuxcat{A}$, on note $\TildeLax{\mathdeuxcat{A}}$ la \deux{}catégorie définie comme suit. Ses objets sont les objets de $\mathdeuxcat{A}$. Étant donné deux objets $a$ et $a'$, les \un{}cellules de $a$ vers $a'$ sont les $([m], x : [m] \to \mathdeuxcat{A})$, avec $m \geq 0$ un entier et $x$ un \DeuxFoncteurStrict{} tel que $x_{0} = a$ et $x_{m} = a'$. On pourra noter une telle \un{}cellule par $([m], x_{1,0} : x_{0} \to x_{1}, \dots, x_{m,m-1} : x_{m-1} \to x_{m})$. Étant donné $([m], x)$ et $([n], y)$ deux \un{}cellules parallèles de $a$ vers $a'$, les \deux{}cellules de la première vers la seconde sont de la forme $(\varphi, \alpha_{1}, \dots, \alpha_{n})$ avec $\varphi$ une application croissante de $[n]$ vers $[m]$ telle que $\varphi(0) = 0$, $\varphi(n) = m$ et $x_{\varphi_{i}} = y_{i}$ pour tout objet $i$ de $[n]$, et $\alpha_{i}$ une \deux{}cellule de $x_{\varphi(i), \varphi(i)-1} \dots x_{\varphi(i-1)+1, \varphi(i-1)}$ vers $y_{i, i-1}$ dans $\mathdeuxcat{A}$, pour tout objet $i$ de $[n]$. On pourra noter $(\varphi, \alpha)$ une telle \deux{}cellule ; il s'agit d'une notation plus satisfaisante conceptuellement si l'on considère $\alpha$ comme une \emph{transformation relative aux objets} de $x \varphi$ vers $y$ (Steve Lack a souligné l'importance d'un tel renforcement de la notion de transformation sous le nom d'\emph{icône} ; nous adoptons ce point de vue dans \cite[section 1.12]{TheseMoi} pour décrire l'adjonction de Bénabou). Les diverses unités et compositions se définissent à l'aide de celles de $\mathdeuxcat{A}$. L'identité de l'objet $a$ dans $\TildeLax{\mathdeuxcat{A}}$ est définie par $([0], a)$ (on a noté $a$ l'unique \DeuxFoncteurStrict{} de $[0]$ vers $\mathdeuxcat{A}$ envoyant l'objet $0$ de $[0]$ sur $a$). La composition des \un{}cellules est définie par concaténation, de même que la composition horizontale des \deux{}cellules. La composition verticale des \deux{}cellules est définie comme suit. Étant donné $([m], x)$, $([n], y)$ et $([p], z)$ trois \un{}cellules parallèles de $\TildeLax{\mathdeuxcat{A}}$, $(\varphi, \alpha)$ une \deux{}cellule de $([m], x)$ vers $([n], y)$ et $(\psi, \beta)$ une \deux{}cellule de $([n], y)$ vers $([p], z)$, la composée $(\psi, \beta) \CompDeuxUn (\varphi, \alpha)$ est définie par $(\beta_{p} \CompDeuxUn (\alpha_{n} \CompDeuxZero \dots \CompDeuxZero \alpha_{\psi(p-1) + 1})) \CompDeuxZero \dots \CompDeuxZero (\beta_{1} \CompDeuxUn (\alpha_{\psi(1)} \CompDeuxZero \dots \CompDeuxZero \alpha_{1}))$. En vertu de la loi d'échange, cette composée est égale à $(\beta_{p} \CompDeuxZero \dots \CompDeuxZero \beta_{1}) \CompDeuxUn (\alpha_{n} \CompDeuxZero \dots \CompDeuxZero \alpha_{1})$.

Pour tout \DeuxFoncteurLax{} $u : \mathdeuxcat{A} \to \mathdeuxcat{B}$, on définit un \DeuxFoncteurStrict{} $\TildeLax{u} : \TildeLax{\mathdeuxcat{A}} \to \TildeLax{\mathdeuxcat{B}}$ de la façon suivante. Pour tout objet $a$ de $\TildeLax{\mathdeuxcat{A}}$, 
$$
\TildeLax{u} (a) = u(a)
$$
Pour toute \un{}cellule $([m], x)$ de $\TildeLax{\mathdeuxcat{A}}$, 
$$
\TildeLax{u} ([m], x) = ([m], u(x_{1,0}), \dots, u(x_{m, m-1}))
$$
Pour toute \deux{}cellule $(\varphi, \alpha)$ de $([m], x)$ vers $([n], y)$ comme ci-dessus,
$$
\TildeLax{u} (\varphi, \alpha) = (u(\alpha_{n}) \CompDeuxZero \dots \CompDeuxZero u(\alpha_{1})) (u_{x_{m, m-1}, \dots, x_{\varphi(m-1)+1, \varphi(m-1)}} \CompDeuxZero \dots \CompDeuxZero u_{x_{\varphi(1), \varphi(1)-1}, \dots, x_{1, 0}})
$$
\end{df}

\begin{rem}
En vertu de la loi d'échange, en conservant les mêmes notations, la \deux{}cellule $\TildeLax{u} (\varphi, \alpha)$ est égale à
$$
(u(\alpha_{n}) u_{x_{m, m-1}, \dots, x_{\varphi(m-1)+1, \varphi(m-1)}}) \CompDeuxZero \dots \CompDeuxZero (u(\alpha_{1}) u_{x_{\varphi(1), \varphi(1)-1}, \dots, x_{1, 0}})
$$
\end{rem}

En utilisant la « condition de cocycle généralisée », on vérifie la fonctorialité de la construction ci-dessus. Autrement dit :

\begin{prop}\label{TildeFoncteur}
L'assignation $\mathdeuxcat{A} \mapsto \TildeLax{\mathdeuxcat{A}}$, $u \mapsto \TildeLax{u}$ définit un foncteur de $\DeuxCatLax$ vers $\DeuxCat$.
\end{prop}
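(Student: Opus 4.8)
The plan is to establish, after checking that $\TildeLax{u}$ really is a \DeuxFoncteurStrict{} for each \DeuxFoncteurLax{} $u$, the two functoriality identities $\TildeLax{1_{\mathdeuxcat{A}}} = 1_{\TildeLax{\mathdeuxcat{A}}}$ and $\TildeLax{(vu)} = \TildeLax{v}\,\TildeLax{u}$.

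First I would verify that $\TildeLax{u}$ is well defined on \deux{}cells and strict. For a \deux{}cell $(\varphi, \alpha_{1}, \dots, \alpha_{n})$ from $([m], x_{1,0}, \dots, x_{m,m-1})$ to $([n], y_{1,0}, \dots, y_{n,n-1})$, the remark preceding the proposition rewrites its image, by the exchange law, as the horizontal composite of the \deux{}cells $\beta_{i} = u(\alpha_{i})\, u_{x_{\varphi(i),\varphi(i)-1}, \dots, x_{\varphi(i-1)+1, \varphi(i-1)}}$. Each $\beta_{i}$ runs from $u(x_{\varphi(i),\varphi(i)-1}) \dots u(x_{\varphi(i-1)+1,\varphi(i-1)})$ to $u(y_{i,i-1})$, so $(\varphi, \beta_{1}, \dots, \beta_{n})$ is a legitimate \deux{}cell of $\TildeLax{\mathdeuxcat{B}}$ with the same underlying monotone map $\varphi$; this is read off directly from the source and target of the structural \deux{}cells of Definition \ref{DefDeuxCellStruct}. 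Preservation of objects and of the composition and identities of \un{}cells (concatenation of chains, the length-zero chain being the identity) is then immediate.

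Next I would check compatibility with the \deux{}categorical structure. Identities of \deux{}cells are preserved: for the identity of a chain the map $\varphi$ is the identity, every block is a singleton, the corresponding structural \deux{}cell is an identity, and $u$ preserves identity \deux{}cells, so the formula returns the identity. Preservation of horizontal composition is immediate, since concatenating two chains merely appends blocks and no block straddles the join. The substantive point is vertical composition: composing $(\varphi, \alpha_{1}, \dots, \alpha_{n})$ with $(\psi, \gamma_{1}, \dots, \gamma_{p})$ groups the \un{}cells of the source chain into the blocks cut out by $\varphi\psi$, so the image of the composite carries one structural \deux{}cell spanning each such block, whereas the composite of the two images carries structural \deux{}cells over the finer subdivision that $\varphi$ induces inside each $\psi$-block. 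That these agree is precisely the generalized cocycle condition for the structural \deux{}cells (see \cite{TheseBenabou}), which allows a structural \deux{}cell over a chain to be split along any subdivision.

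Finally, functoriality proper. The equality $\TildeLax{1_{\mathdeuxcat{A}}} = 1_{\TildeLax{\mathdeuxcat{A}}}$ holds because the identity functor has identity structural \deux{}cells, so the image formula collapses to the input. For a composite $\mathdeuxcat{A} \to \mathdeuxcat{B} \to \mathdeuxcat{C}$ of \DeuxFoncteursLax{}, agreement of $\TildeLax{(vu)}$ and $\TildeLax{v}\,\TildeLax{u}$ on objects and \un{}cells is immediate, and on \deux{}cells it reduces to the identity $(vu)_{f', f} = v(u_{f', f})\, v_{u(f'), u(f)}$ expressing the structural \deux{}cells of a composite lax functor, together with its evident extension to longer chains. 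The main obstacle is this last reduction and the vertical-composition check: both demand careful bookkeeping of the monotone maps and the nested structural \deux{}cells, and it is only a correct appeal to the generalized cocycle condition that makes the two sides coincide. Once that condition is in hand, the residual identities are routine but tedious index manipulations, which I would relegate to \cite{TheseMoi}.
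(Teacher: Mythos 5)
Your proposal is correct and takes essentially the same route as the paper, whose entire proof consists in invoking the \emph{generalized cocycle condition} (cf.\ the remark following Definition \ref{DefDeuxCellStruct} and the reference to \cite{TheseBenabou}) to verify functoriality, exactly where you invoke it: for the vertical-composition check and for compatibility with composition of lax functors. The only nuance is that splitting a structural \deux{}cell along a subdivision must be used jointly with the naturality of the structural \deux{}cells in their arguments (part of the lax-functor axioms), a point subsumed in the paper's, and your, appeal to coherence.
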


Du fait du caractère quelque peu disgracieux de la notation « $\TildeLax{?}$ », on notera $B : \DeuxCatLax{} \to \DeuxCat$ le foncteur défini par la proposition \ref{TildeFoncteur}. On se propose maintenant de vérifier qu'il s'agit d'un adjoint à gauche de l'inclusion canonique $\DeuxCat \hookrightarrow \DeuxCatLax$. 

\begin{df}\label{DefEta}
Pour toute \deux{}catégorie $\mathdeuxcat{A}$, on note $\LaxCanonique{\mathdeuxcat{A}} : \mathdeuxcat{A} \to \TildeLax{\mathdeuxcat{A}}$ le \DeuxFoncteurLax{} défini comme suit. Pour tout objet $a$ de $\mathdeuxcat{A}$, on pose 
$$
\LaxCanonique{\mathdeuxcat{A}} (a) = a
$$
Pour toute \un{}cellule $f$ de $\mathdeuxcat{A}$, on pose 
$$
\LaxCanonique{\mathdeuxcat{A}} (f) = ([1], f)
$$ 
Pour toute \deux{}cellule $\alpha$ de $\mathdeuxcat{A}$, on pose 
$$
\LaxCanonique{\mathdeuxcat{A}} (\alpha) = (1_{[1]}, \alpha)
$$ 
Pour tout objet $a$ de $\mathdeuxcat{A}$, on pose
$$
({\LaxCanonique{\mathdeuxcat{A}}})_{a} = ([1] \to [0], 1_{1_{a}})
$$
Pour tout couple $(f,f')$ de \un{}cellules de $\mathdeuxcat{A}$ telles que la composée $f'f$ fasse sens, on pose
$$
({\LaxCanonique{\mathdeuxcat{A}}})_{f',f} = ([1] \to [2], 1_{f'f})
$$
(La flèche $[1] \to [2]$ est définie de façon unique par la condition qu'il s'agit d'une application croissante respectant les extrémités.)
\end{df}

On laisse au lecteur le soin de vérifier que cela définit bien un \DeuxFoncteurLax{}. 

\begin{df}\label{DefEpsilon}
Pour toute \deux{}catégorie $\mathdeuxcat{A}$, on note $\StrictCanonique{\mathdeuxcat{A}} : \TildeLax{\mathdeuxcat{A}} \to \mathdeuxcat{A}$ le \DeuxFoncteurStrict{} défini comme suit. Pour tout objet $a$ de $\TildeLax{\mathdeuxcat{A}}$, 
$$
\StrictCanonique{\mathdeuxcat{A}} (a) = a
$$ 
Pour toute \un{}cellule $([m], x)$ de $\TildeLax{\mathdeuxcat{A}}$, 
$$
\StrictCanonique{\mathdeuxcat{A}} ([m], x) = x_{m,m-1} \dots x_{1,0}
$$
Pour toute \deux{}cellule $(\varphi, \alpha)$ de $\TildeLax{\mathdeuxcat{A}}$ comme ci-dessus, on pose
$$
\StrictCanonique{\mathdeuxcat{A}} (\varphi, \alpha) = \alpha_{n} \CompDeuxZero \dots \CompDeuxZero \alpha_{1}
$$
\end{df}

On laisse au lecteur le soin de vérifier que cela définit bien un \DeuxFoncteurStrict{}. Les lemmes \ref{UniteNaturelle} et \ref{CouniteNaturelle} se vérifient sans difficulté. 

\begin{lemme}\label{UniteNaturelle}
Pour tout \DeuxFoncteurLax{} $u : \mathdeuxcat{A} \to \mathdeuxcat{B}$, le diagramme dans $\DeuxCatLax{}$
$$
\xymatrix{
\TildeLax{\mathdeuxcat{A}}
\ar[r]^{\TildeLax{u}}
&\TildeLax{\mathdeuxcat{B}}
\\
\mathdeuxcat{A}
\ar[u]^{\LaxCanonique{\mathdeuxcat{A}}}
\ar[r]_{u}
&\mathdeuxcat{B}
\ar[u]_{\LaxCanonique{\mathdeuxcat{B}}}
}
$$
est commutatif.
\end{lemme}

\begin{lemme}\label{CouniteNaturelle}
Pour tout \DeuxFoncteurStrict{} $u : \mathdeuxcat{A} \to \mathdeuxcat{B}$, le diagramme dans $\DeuxCat{}$
$$
\xymatrix{
\TildeLax{\mathdeuxcat{A}}
\ar[r]^{\TildeLax{u}}
\ar[d]_{\StrictCanonique{\mathdeuxcat{A}}}
&\TildeLax{\mathdeuxcat{B}}
\ar[d]^{\StrictCanonique{\mathdeuxcat{B}}}
\\
\mathdeuxcat{A}
\ar[r]_{u}
&\mathdeuxcat{B}
}
$$
est commutatif.
\end{lemme}

Si l'on note $I : \DeuxCat \hookrightarrow \DeuxCatLax$ l'inclusion canonique, les considérations précédentes nous permettent donc de définir des transformations naturelles $\eta : 1_{\DeuxCatLax} \Rightarrow IB$ et $\epsilon : BI \Rightarrow 1_{\DeuxCat}$ dont les composantes en $\mathdeuxcat{A}$ sont $\LaxCanonique{\mathdeuxcat{A}}$ et $\StrictCanonique{\mathdeuxcat{A}}$ respectivement. 

\begin{prop}\label{BIAdjonction}
Le foncteur $B : \DeuxCatLax \to \DeuxCat$ est un adjoint à gauche de l'inclusion $I : \DeuxCat \to \DeuxCatLax$, les transformations naturelles $\TransLaxCanonique$ et $\TransStrictCanonique$ constituant respectivement l'unité et la coünité de l'adjonction $(B,I)$. 
\end{prop}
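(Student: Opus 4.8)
The plan is to deduce the adjunction from the two triangle (zigzag) identities for the pair $(\TransLaxCanonique, \TransStrictCanonique)$. The naturality of $\TransLaxCanonique : 1_{\DeuxCatLax} \Rightarrow IB$ and $\TransStrictCanonique : BI \Rightarrow 1_{\DeuxCat}$ being already granted by Lemmas~\ref{UniteNaturelle} and~\ref{CouniteNaturelle}, it suffices to check that, for every strict \deux{}cat\'egorie $\mathdeuxcat{A}$, the composite $\StrictCanonique{\mathdeuxcat{A}} \circ \LaxCanonique{\mathdeuxcat{A}} : \mathdeuxcat{A} \to \TildeLax{\mathdeuxcat{A}} \to \mathdeuxcat{A}$ equals $1_{\mathdeuxcat{A}}$, and that, for every \deux{}cat\'egorie $\mathdeuxcat{A}$, the composite $\StrictCanonique{\TildeLax{\mathdeuxcat{A}}} \circ \TildeLax{(\LaxCanonique{\mathdeuxcat{A}})} : \TildeLax{\mathdeuxcat{A}} \to \TildeLax{\TildeLax{\mathdeuxcat{A}}} \to \TildeLax{\mathdeuxcat{A}}$ equals $1_{\TildeLax{\mathdeuxcat{A}}}$.

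The first identity is immediate. On an object $a$, a \un{}cellule $f$ and a \deux{}cellule $\alpha$ of $\mathdeuxcat{A}$ one reads off $a \mapsto a$, $f \mapsto \StrictCanonique{\mathdeuxcat{A}}([1],f) = f$ and $\alpha \mapsto \StrictCanonique{\mathdeuxcat{A}}(1_{[1]},\alpha) = \alpha$, so the composite agrees with $1_{\mathdeuxcat{A}}$ on the underlying data. Since $\StrictCanonique{\mathdeuxcat{A}}$ is strict, the structural \deux{}cellules of $\StrictCanonique{\mathdeuxcat{A}} \circ \LaxCanonique{\mathdeuxcat{A}}$ are the images under $\StrictCanonique{\mathdeuxcat{A}}$ of $(\LaxCanonique{\mathdeuxcat{A}})_{a} = ([1]\to[0], 1_{1_{a}})$ and $(\LaxCanonique{\mathdeuxcat{A}})_{f',f} = ([1]\to[2], 1_{f'f})$; by Definition~\ref{DefEpsilon} these are horizontal composites of a single identity \deux{}cellule, hence $1_{1_{a}}$ and $1_{f'f}$. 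Thus all structural \deux{}cellules vanish and $\StrictCanonique{\mathdeuxcat{A}} \circ \LaxCanonique{\mathdeuxcat{A}} = 1_{\mathdeuxcat{A}}$.

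For the second identity both factors are \DeuxFoncteursStricts{}, so it is enough to compare their values on objects, \un{}cellules and \deux{}cellules. Objects are fixed. A \un{}cellule $([m], x_{1,0}, \dots, x_{m,m-1})$ is sent by $\TildeLax{(\LaxCanonique{\mathdeuxcat{A}})}$ to $([m], ([1],x_{1,0}), \dots, ([1],x_{m,m-1}))$, whereupon $\StrictCanonique{\TildeLax{\mathdeuxcat{A}}}$ forms the composite in $\TildeLax{\mathdeuxcat{A}}$ of the length-one chains $([1],x_{i,i-1})$; as composition in $\TildeLax{\mathdeuxcat{A}}$ is concatenation of chains, this gives back $([m], x_{1,0}, \dots, x_{m,m-1})$.

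The agreement on \deux{}cellules is where I expect the real work to lie. By Definition~\ref{DefTilde}, the image of a \deux{}cellule $(\varphi, \alpha_{1}, \dots, \alpha_{n})$ under $\TildeLax{(\LaxCanonique{\mathdeuxcat{A}})}$ is the vertical composite of $\LaxCanonique{\mathdeuxcat{A}}(\alpha_{n}) \CompDeuxZero \dots \CompDeuxZero \LaxCanonique{\mathdeuxcat{A}}(\alpha_{1})$ with a horizontal composite of iterated structural \deux{}cellules of $\LaxCanonique{\mathdeuxcat{A}}$, the whole being a \deux{}cellule of $\TildeLax{\TildeLax{\mathdeuxcat{A}}}$. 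Applying the strict functor $\StrictCanonique{\TildeLax{\mathdeuxcat{A}}}$, which by Definition~\ref{DefEpsilon} horizontally composes the component \deux{}cellules in $\TildeLax{\mathdeuxcat{A}}$, the structural contribution collapses: the iterated structural \deux{}cellules of $\LaxCanonique{\mathdeuxcat{A}}$ are, by Definition~\ref{DefDeuxCellStruct} together with the computation carried out for the first identity, sent to identities, so one is left with $\alpha_{n} \CompDeuxZero \dots \CompDeuxZero \alpha_{1}$, which is precisely the original \deux{}cellule. Turning this outline into a proof means unwinding the 2-cell structure of the twice-iterated construction $\TildeLax{\TildeLax{\mathdeuxcat{A}}}$ and using the generalized cocycle condition to reorganize the vertical and horizontal composites so that the simplicial map $\varphi$ and the data $\alpha_{i}$ are recovered in the correct positions; this bookkeeping, rather than any conceptual difficulty, is the crux. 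Once both triangle identities are in hand, the adjunction $(B,I)$ with unit $\TransLaxCanonique$ and counit $\TransStrictCanonique$ follows formally. Alternatively, one could bypass the identities and verify directly that precomposition with $\LaxCanonique{\mathdeuxcat{A}}$ induces, for every strict $\mathdeuxcat{B}$, a bijection between \DeuxFoncteursStricts{} $\TildeLax{\mathdeuxcat{A}} \to \mathdeuxcat{B}$ and \DeuxFoncteursLax{} $\mathdeuxcat{A} \to \mathdeuxcat{B}$, natural in both variables; but the triangle-identity route is the most economical given the data already assembled.
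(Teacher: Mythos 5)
Your proof is correct and takes essentially the same route as the paper: the paper's own proof consists precisely of the observation that the triangle identities hold (naturality of $\TransLaxCanonique$ and $\TransStrictCanonique$ having already been recorded in the lemmes \ref{UniteNaturelle} et \ref{CouniteNaturelle}), and your verification of the two identities $\StrictCanonique{\mathdeuxcat{A}} \circ \LaxCanonique{\mathdeuxcat{A}} = 1_{\mathdeuxcat{A}}$ et $\StrictCanonique{\TildeLax{\mathdeuxcat{A}}} \circ \TildeLax{(\LaxCanonique{\mathdeuxcat{A}})} = 1_{\TildeLax{\mathdeuxcat{A}}}$ is accurate, indeed more detailed than the paper's. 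The only point to polish is the phrase asserting that the structural contribution is ``sent to identities'': the iterated structural cells of $\LaxCanonique{\mathdeuxcat{A}}$ are of the form $([1] \to [k], 1)$, so they are not identities of $\TildeLax{\mathdeuxcat{A}}$; rather, their collapse maps reassemble under the horizontal (concatenation) composition performed by $\StrictCanonique{\TildeLax{\mathdeuxcat{A}}}$ into the simplicial operator $\varphi$, while their identity components disappear into the $\alpha_{i}$, which is exactly the bookkeeping you flag.
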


\begin{proof}
Les identités triangulaires se vérifient sans difficulté.
\end{proof}

\begin{lemme}\label{LemmeTranchesCouniteAspheriques}
Pour toute \deux{}catégorie $\mathdeuxcat{A}$, pour tout objet $a$ de $\mathdeuxcat{A}$, la \deux{}catégorie $\TrancheCoLax{\TildeLax{\mathdeuxcat{A}}}{\StrictCanonique{\mathdeuxcat{A}}}{a}$ admet un objet admettant un objet final.
\end{lemme}

\begin{proof}
Cela résulte de calculs ne présentant aucune difficulté. On présentera plus loin (lemme \ref{StrictInduitAspherique}) les détails de calcul du « cas relatif » plus général. 
\end{proof}

\begin{prop}\label{CouniteEquiFaible}
Pour toute \deux{}catégorie $\mathdeuxcat{A}$, le \DeuxFoncteurStrict{} $\StrictCanonique{\mathdeuxcat{A}} : \TildeLax{\mathdeuxcat{A}} \to \mathdeuxcat{A}$ est une équivalence faible.
\end{prop}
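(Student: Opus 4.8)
Le plan est de déduire l'énoncé du Théorème A de Bullejos-Cegarra (théorème~\ref{ThABC}) dans son cas absolu ($\mathdeuxcat{B} = \mathdeuxcat{C}$, $v = 1$), appliqué au triangle commutatif trivial de \DeuxFoncteursStricts{}
$$
\xymatrix{
\TildeLax{\mathdeuxcat{A}}
\ar[rr]^{\StrictCanonique{\mathdeuxcat{A}}}
\ar[dr]_{\StrictCanonique{\mathdeuxcat{A}}}
&&\mathdeuxcat{A}
\ar[dl]^{1_{\mathdeuxcat{A}}}
\\
&\mathdeuxcat{A}
}
$$
dans lequel $u = w = \StrictCanonique{\mathdeuxcat{A}}$. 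Parmi les quatre variantes comma autorisées par le théorème~\ref{ThABC}, je retiendrai celle des \deux{}catégories comma colax, de façon à pouvoir exploiter directement le lemme~\ref{LemmeTranchesCouniteAspheriques}. Il suffira alors de vérifier que, pour tout objet $a$ de $\mathdeuxcat{A}$, le \DeuxFoncteurStrict{} induit
$$
\DeuxFoncCommaCoLaxCoq{\StrictCanonique{\mathdeuxcat{A}}}{}{a} : \CommaCoLax{\TildeLax{\mathdeuxcat{A}}}{\StrictCanonique{\mathdeuxcat{A}}}{a} \to \CommaCoLax{\mathdeuxcat{A}}{}{a}
$$
est une équivalence faible.

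Pour établir ce point, la stratégie consiste à montrer que la source et le but de ce \DeuxFoncteurStrict{} sont tous deux asphériques, puis à conclure par la propriété de $2$ sur $3$. D'une part, le lemme~\ref{LemmeTranchesCouniteAspheriques} fournit un objet homotopiquement final dans $\CommaCoLax{\TildeLax{\mathdeuxcat{A}}}{\StrictCanonique{\mathdeuxcat{A}}}{a}$, laquelle est donc asphérique en vertu du lemme~\ref{OFAspherique}. D'autre part, l'exemple~\ref{ExemplesOF} fournit un objet homotopiquement final dans $\CommaCoLax{\mathdeuxcat{A}}{}{a}$, qui est donc également asphérique, toujours par le lemme~\ref{OFAspherique}. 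Comme le triangle
$$
\xymatrix{
\CommaCoLax{\TildeLax{\mathdeuxcat{A}}}{\StrictCanonique{\mathdeuxcat{A}}}{a}
\ar[rr]^{\DeuxFoncCommaCoLaxCoq{\StrictCanonique{\mathdeuxcat{A}}}{}{a}}
\ar[dr]
&&\CommaCoLax{\mathdeuxcat{A}}{}{a}
\ar[dl]
\\
&\DeuxCatPonct
}
$$
formé des \DeuxFoncteursStricts{} canoniques vers la \deux{}catégorie ponctuelle est commutatif et que ses deux flèches obliques sont des équivalences faibles (asphéricité des deux commas), la propriété de $2$ sur $3$ entraîne que $\DeuxFoncCommaCoLaxCoq{\StrictCanonique{\mathdeuxcat{A}}}{}{a}$ en est une.

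L'hypothèse du théorème~\ref{ThABC} se trouvant ainsi vérifiée pour tout objet $a$ de $\mathdeuxcat{A}$, on conclut que $\StrictCanonique{\mathdeuxcat{A}}$ est une équivalence faible. L'essentiel du travail est en réalité concentré dans le lemme~\ref{LemmeTranchesCouniteAspheriques}, dont la démonstration est renvoyée au cas relatif (lemme~\ref{StrictInduitAspherique}) : une fois l'objet homotopiquement final exhibé, l'argument ci-dessus est purement formel et ne présente aucun obstacle sérieux. Le seul point demandant un minimum d'attention est le choix cohérent de la variante comma entre le lemme~\ref{LemmeTranchesCouniteAspheriques} et le théorème~\ref{ThABC} : j'ai privilégié la variante colax précisément parce qu'elle est celle pour laquelle l'asphéricité des tranches est déjà disponible, les trois autres variantes conduisant au même résultat par dualité mais exigeant l'analogue du lemme~\ref{LemmeTranchesCouniteAspheriques} pour les \deux{}catégories comma correspondantes.
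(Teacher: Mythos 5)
Your proof is correct and follows essentially the same route as the paper: the paper likewise deduces the statement from Theorem \ref{ThABC} (in its absolute form) after observing that the colax slices $\CommaCoLax{\TildeLax{\mathdeuxcat{A}}}{\StrictCanonique{\mathdeuxcat{A}}}{a}$ admit a homotopically final object (lemma \ref{LemmeTranchesCouniteAspheriques}) and are therefore aspherical (lemma \ref{OFAspherique}). The only difference is one of exposition: you make explicit the step the paper leaves implicit, namely that the relative hypothesis of Theorem \ref{ThABC} reduces, via the asphericity of the target slices $\CommaCoLax{\mathdeuxcat{A}}{}{a}$ (example \ref{ExemplesOF}) and the property of $2$ sur $3$, to the asphericity of the source slices.
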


\begin{proof}
Pour tout objet $a$ de $\mathdeuxcat{A}$, la \deux{}catégorie $\TrancheCoLax{\TildeLax{\mathdeuxcat{A}}}{\StrictCanonique{\mathdeuxcat{A}}}{a}$ admet un objet admettant un objet final (lemme \ref{LemmeTranchesCouniteAspheriques}), donc est asphérique (lemme \ref{OFAspherique}). Le résultat s'en déduit en vertu du corollaire \ref{CorollaireThABC}. 
\end{proof}

\begin{prop}\label{UniteEquiFaible}
Pour toute \deux{}catégorie $\mathdeuxcat{A}$, le \DeuxFoncteurLax{} $\LaxCanonique{\mathdeuxcat{A}} : \mathdeuxcat{A} \to \TildeLax{\mathdeuxcat{A}}$ est une équivalence faible.
\end{prop}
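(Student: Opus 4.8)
The plan is to exploit the adjunction $(B, I)$ from Proposition \ref{BIAdjonction} together with the fact (Proposition \ref{CouniteEquiFaible}) that the co-unit $\StrictCanonique{\mathdeuxcat{A}}$ is a weak equivalence, and the triangular identities. Recall that for the adjunction $(B, I)$ the triangular identity relative to $\mathdeuxcat{A}$ reads
$$
\StrictCanonique{\TildeLax{\mathdeuxcat{A}}} \circ \TildeLax{(\LaxCanonique{\mathdeuxcat{A}})} = 1_{\TildeLax{\mathdeuxcat{A}}}.
$$
The idea is that $\LaxCanonique{\mathdeuxcat{A}}$ and $\StrictCanonique{\mathdeuxcat{A}}$ are, up to the functor $B$ and the co-unit, mutually inverse in a homotopical sense. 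Since $\StrictCanonique{\mathdeuxcat{A}} \circ \LaxCanonique{\mathdeuxcat{A}} = 1_{\mathdeuxcat{A}}$ (this is the other composite, which holds strictly by the descriptions in Definitions \ref{DefEta} and \ref{DefEpsilon}: a $1$\nobreakdash-cell $f$ is sent to $([1], f)$ and then back to $f$), the equality $\StrictCanonique{\mathdeuxcat{A}} \circ \LaxCanonique{\mathdeuxcat{A}} = 1_{\mathdeuxcat{A}}$ is a weak equivalence by FS1.

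First I would verify the strict identity $\StrictCanonique{\mathdeuxcat{A}} \LaxCanonique{\mathdeuxcat{A}} = 1_{\mathdeuxcat{A}}$ by inspection on objects, $1$\nobreakdash-cells and $2$\nobreakdash-cells, using the explicit formulas. Then, to conclude that $\LaxCanonique{\mathdeuxcat{A}}$ is a weak equivalence, I would invoke the property FS2 (``deux sur trois'') applied to the commutative triangle
$$
\xymatrix{
\mathdeuxcat{A}
\ar[rr]^{\LaxCanonique{\mathdeuxcat{A}}}
\ar[dr]_{1_{\mathdeuxcat{A}}}
&&\TildeLax{\mathdeuxcat{A}}
\ar[dl]^{\StrictCanonique{\mathdeuxcat{A}}}
\\
&\mathdeuxcat{A}
}
$$
Since $1_{\mathdeuxcat{A}}$ is a weak equivalence (FS1) and $\StrictCanonique{\mathdeuxcat{A}}$ is a weak equivalence (Proposition \ref{CouniteEquiFaible}), the property of $2$ out of $3$ forces $\LaxCanonique{\mathdeuxcat{A}}$ to be a weak equivalence as well.

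The main subtlety, and the only point requiring genuine care, is the verification of the strict equality $\StrictCanonique{\mathdeuxcat{A}} \LaxCanonique{\mathdeuxcat{A}} = 1_{\mathdeuxcat{A}}$ \emph{as a \DeuxFoncteurLax{}}, i.e.\ checking not only the action on cells but also that the structural $2$\nobreakdash-cells of the composite coincide with those of the identity. One must confirm that the structural $2$\nobreakdash-cells $(\LaxCanonique{\mathdeuxcat{A}})_{a} = ([1] \to [0], 1_{1_{a}})$ and $(\LaxCanonique{\mathdeuxcat{A}})_{f',f} = ([1] \to [2], 1_{f'f})$ are carried by $\StrictCanonique{\mathdeuxcat{A}}$ (which is strict, hence has identity structural $2$\nobreakdash-cells) to the identities, so that the composite is genuinely the identity \DeuxFoncteurStrict{} on $\mathdeuxcat{A}$. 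This is a direct application of Definition \ref{DefEpsilon}, since $\StrictCanonique{\mathdeuxcat{A}}$ sends a formal composite $([m], x_{1,0}, \dots, x_{m,m-1})$ to the actual composite $x_{m,m-1}\dots x_{1,0}$ and collapses degeneracies accordingly; the computation is routine but must be carried out to legitimize the use of FS2.
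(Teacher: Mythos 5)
Votre d�monstration est correcte et suit essentiellement l'argument du papier : celui-ci observe que $\LaxCanonique{\mathdeuxcat{A}}$ est une section de $\StrictCanonique{\mathdeuxcat{A}}$ (c'est l'identit� $\StrictCanonique{\mathdeuxcat{A}} \LaxCanonique{\mathdeuxcat{A}} = 1_{\mathdeuxcat{A}}$ que vous v�rifiez, l'une des identit�s triangulaires de l'adjonction $(B,I)$) et conclut par la proposition \ref{CouniteEquiFaible} et la propri�t� de $2$ sur $3$, exactement comme vous le faites.
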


\begin{proof}
C'est une section de $\StrictCanonique{\mathdeuxcat{A}}$, qui est une équivalence faible en vertu de la proposition \ref{CouniteEquiFaible}.
\end{proof}

\begin{df}\label{DefBarreLax}
Pour tout \DeuxFoncteurLax{} $u : \mathdeuxcat{A} \to \mathdeuxcat{B}$, on pose $\BarreLax{u} = \StrictCanonique{B} \TildeLax{u}$.
\end{df}

\begin{rem}\label{PropUnivBarre}
En vertu de la théorie classique des adjonctions, étant donné un \DeuxFoncteurLax{} $u : \mathdeuxcat{A} \to \mathdeuxcat{B}$, $\BarreLax{u}$ est l'unique \DeuxFoncteurStrict{} de $\TildeLax{\mathdeuxcat{A}}$ vers $\mathdeuxcat{B}$ rendant le diagramme
$$
\xymatrix{
\TildeLax{\mathdeuxcat{A}}
\ar[dr]
\\
\mathdeuxcat{A} 
\ar[u]^{\LaxCanonique{\mathdeuxcat{A}}}
\ar[r]_{u}
&\mathdeuxcat{B}
}
$$
commutatif.
\end{rem}

\begin{prop}\label{EquiLaxTildeBarreLax}
Un \DeuxFoncteurLax{} $u : \mathdeuxcat{A} \to \mathdeuxcat{B}$ est une équivalence faible si et seulement si $\TildeLax{u}$ en est une, ce qui est le cas si et seulement si $\BarreLax{u}$ en est une.
\end{prop}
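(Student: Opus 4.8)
Le plan est de tout déduire de la propriété «de $2$ sur $3$» (FS2), appliquée à deux reprises, en exploitant le fait que l'unité $\LaxCanonique{\mathdeuxcat{A}}$ et la co-unité $\StrictCanonique{\mathdeuxcat{B}}$ de l'adjonction de Bénabou sont des équivalences faibles, établi respectivement aux propositions \ref{UniteEquiFaible} et \ref{CouniteEquiFaible}. Aucune construction nouvelle n'est requise : il s'agit d'assembler correctement deux triangles commutatifs déjà à disposition.

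D'abord, je partirais de l'égalité $\BarreLax{u} \LaxCanonique{\mathdeuxcat{A}} = u$ de la remarque \ref{PropUnivBarre}. Elle exprime que $u$ est la composée du triangle commutatif de \DeuxFoncteursLax{} formé de $\LaxCanonique{\mathdeuxcat{A}} : \mathdeuxcat{A} \to \TildeLax{\mathdeuxcat{A}}$ suivi de $\BarreLax{u} : \TildeLax{\mathdeuxcat{A}} \to \mathdeuxcat{B}$. Comme $\LaxCanonique{\mathdeuxcat{A}}$ est une équivalence faible (proposition \ref{UniteEquiFaible}), la propriété «de $2$ sur $3$» entraîne que $u$ est une équivalence faible si et seulement si $\BarreLax{u}$ en est une.

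Ensuite, je considérerais l'égalité $\BarreLax{u} = \StrictCanonique{\mathdeuxcat{B}} \TildeLax{u}$, qui n'est autre que la définition \ref{DefBarreLax}. Elle présente $\BarreLax{u}$ comme composée de $\TildeLax{u} : \TildeLax{\mathdeuxcat{A}} \to \TildeLax{\mathdeuxcat{B}}$ suivi de $\StrictCanonique{\mathdeuxcat{B}} : \TildeLax{\mathdeuxcat{B}} \to \mathdeuxcat{B}$. Puisque $\StrictCanonique{\mathdeuxcat{B}}$ est une équivalence faible (proposition \ref{CouniteEquiFaible}), une seconde application de la propriété «de $2$ sur $3$» montre que $\BarreLax{u}$ est une équivalence faible si et seulement si $\TildeLax{u}$ en est une. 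En composant les deux équivalences logiques ainsi obtenues, on conclut que $u$, $\TildeLax{u}$ et $\BarreLax{u}$ sont des équivalences faibles ou ne le sont pas simultanément, ce qui est précisément l'énoncé.

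La démonstration est purement formelle et ne présente pas d'obstacle à proprement parler. Le seul point sur lequel je resterais vigilant est l'applicabilité de FS2 dans un cadre où les \DeuxFoncteursLax{} $\LaxCanonique{\mathdeuxcat{A}}$, $u$ et $\BarreLax{u}$ ne sont pas tous stricts ; mais la classe des équivalences faibles, définie via le nerf $\NerfLax{}$ pour les \DeuxFoncteursLax{} quelconques, vérifie la propriété «de $2$ sur $3$» sans restriction, de sorte que les deux triangles considérés relèvent bien de cette propriété. Tout repose donc, en définitive, sur le seul caractère d'équivalence faible de l'unité et de la co-unité de l'adjonction $(B, I)$.
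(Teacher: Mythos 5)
Votre d\'emonstration est correcte et proc\`ede essentiellement comme celle du texte : deux applications de la propri\'et\'e de $2$ sur $3$, jointes au fait que l'unit\'e et la co-unit\'e de l'adjonction de B\'enabou sont des \'equivalences faibles (propositions \ref{UniteEquiFaible} et \ref{CouniteEquiFaible}), la seconde \'equivalence logique provenant dans les deux cas de la factorisation $\BarreLax{u} = \StrictCanonique{\mathdeuxcat{B}} \TildeLax{u}$ de la d\'efinition \ref{DefBarreLax}. La seule diff\'erence, sans cons\'equence, est que pour la premi\`ere \'equivalence vous utilisez le triangle $u = \BarreLax{u} \LaxCanonique{\mathdeuxcat{A}}$ de la remarque \ref{PropUnivBarre}, ce qui ne fait intervenir que $\LaxCanonique{\mathdeuxcat{A}}$, l\`a o\`u le texte invoque le carr\'e commutatif $\TildeLax{u} \LaxCanonique{\mathdeuxcat{A}} = \LaxCanonique{\mathdeuxcat{B}} u$ du lemme \ref{UniteNaturelle} et relie donc d'abord $u$ \`a $\TildeLax{u}$ plut\^ot qu'\`a $\BarreLax{u}$.
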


\begin{proof}
Il suffit de considérer le diagramme commutatif
$$
\xymatrix{
\TildeLax{\mathdeuxcat{A}}
\ar[r]^{\TildeLax{u}}
&\TildeLax{\mathdeuxcat{B}}
\\
\mathdeuxcat{A}
\ar[u]^{\LaxCanonique{\mathdeuxcat{A}}}
\ar[r]_{u}
&\mathdeuxcat{B}
\ar[u]_{\LaxCanonique{\mathdeuxcat{B}}}
}
$$
et d'invoquer la proposition \ref{CouniteEquiFaible} pour conclure à la première équivalence. La seconde résulte de l'égalité $\BarreLax{u} = \StrictCanonique{\mathdeuxcat{B}} \TildeLax{u}$ (définition \ref{DefBarreLax}) et de la proposition \ref{CouniteEquiFaible}.
\end{proof}

Le théorème \ref{EqCatLocDeuxCatDeuxCatLax} ne sera pas utilisé avant la section \ref{SectionEqCatLoc}.

\begin{theo}\label{EqCatLocDeuxCatDeuxCatLax}
L'inclusion $\DeuxCat \hookrightarrow \DeuxCatLax$ induit une équivalence de catégories entre les catégories localisées $\Localisation{\DeuxCat}{\DeuxLocFond{W}}$ et $\Localisation{\DeuxCatLax}{\DeuxLocFondLaxInduit{W}}$.
\end{theo}

\begin{proof}
C'est une conséquence du fait que les composantes des transformations naturelles $\TransLaxCanonique$ et $\TransStrictCanonique$ sont dans $\DeuxLocFondLaxInduit{W}$ et $\DeuxLocFond{W}$ respectivement.
\end{proof}

\begin{lemme}\label{LemmeDimitri}
Pour tout \DeuxFoncteurLax{} $u : \mathdeuxcat{A} \to \mathdeuxcat{B}$, le diagramme
$$
\xymatrix{
\TildeLax{\mathdeuxcat{A}}
\ar[d]_{\StrictCanonique{\mathdeuxcat{A}}}
\ar[dr]^{\BarreLax{u}}
\\
\mathdeuxcat{A}
\ar[r]_{u}
&\mathdeuxcat{B}
}
$$
est commutatif à une \DeuxTransformationLax{} $\BarreLax{u} \Rightarrow u \StrictCanonique{\mathdeuxcat{A}}$ près. 
\end{lemme}

\begin{proof}
On pose 
$$
\sigma_{a} = 1_{u(a)}
$$ 
pour tout objet $a$ de $\TildeLax{\mathdeuxcat{A}}$ et
$$
\sigma_{([m], x_{1,0}, \dots, x_{m,m-1})} = u_{x_{m,m-1}, \dots, x_{1,0}}
$$
pour tout morphisme $([m], x_{1,0}, \dots, x_{m,m-1})$ dans $\TildeLax{\mathdeuxcat{A}}$. Il résulte des axiomes des \DeuxFoncteursLax{} que cela définit bien une \DeuxTransformationLax{} $\sigma : \BarreLax{u} \Rightarrow u \StrictCanonique{\mathdeuxcat{A}}$. 
\end{proof}

\begin{lemme}\label{StrictInduitAspherique}
Soient $u : \mathdeuxcat{A} \to \mathdeuxcat{B}$ un \DeuxFoncteurLax{}, $b$ un objet de $\mathdeuxcat{B}$ et
$\DeuxFoncTrancheLaxCoq{\StrictCanonique{\mathdeuxcat{A}}}{\sigma}{b} : \TrancheLax{\TildeLax{\mathdeuxcat{A}}}{\BarreLax{u}}{b} \to  \TrancheLax{\mathdeuxcat{A}}{u}{b}$ le \DeuxFoncteurStrict{} induit par le diagramme
$$
\xymatrix{
\TildeLax{\mathdeuxcat{A}}
\ar[dr]^{\BarreLax{u}}
\ar[d]_{\StrictCanonique{\mathdeuxcat{A}}}
\drtwocell<\omit>{<2>\sigma}
\\
\mathdeuxcat{A}
\ar[r]_{u}
&\mathdeuxcat{B}
&.
}
$$
(voir la démonstration du lemme \ref{LemmeDimitri} pour la définition de $\sigma$). Alors, pour tout objet $(a,p)$ de $\TrancheLax{\mathdeuxcat{A}}{u}{b}$, la \deux{}catégorie
$$
\TrancheCoLax{(\TrancheLax{\TildeLax{\mathdeuxcat{A}}}{\BarreLax{u}}{b})}{\DeuxFoncTrancheLaxCoq{\StrictCanonique{\mathdeuxcat{A}}}{\sigma}{b}}{(a,p)}
$$
admet un objet admettant un objet final. 
\end{lemme}

\begin{proof}
Décrivons partiellement la \deux{}catégorie $
\TrancheCoLax{(\TrancheLax{\TildeLax{\mathdeuxcat{A}}}{\BarreLax{u}}{b})}{\DeuxFoncTrancheLaxCoq{\StrictCanonique{\mathdeuxcat{A}}}{\sigma}{b}}{(a,p)}
$.
\begin{itemize}
\item Les objets en sont les quadruplets $(a', p', f, \alpha)$, où $a'$ est un objet de $\mathdeuxcat{A}$, $p' : u(a') \to b$ une \un{}cellule de $\mathdeuxcat{B}$, $f : a' \to a$ une \un{}cellule de $\mathdeuxcat{A}$ et $\alpha : p' \Rightarrow p u(f)$ une \deux{}cellule de $\mathdeuxcat{B}$.
\item Les \un{}cellules de $(a',p', f, \alpha)$ vers $(a'', p'', f', \alpha')$ sont données par les $([m], x, \beta, \gamma)$, avec $([m],x)$ une \un{}cellule de $a'$ vers $a''$ dans $\TildeLax{\mathdeuxcat{A}}$, $\beta : p' \Rightarrow p'' u(x_{m,m-1}) \dots u(x_{1,0})$ une \deux{}cellule de $\mathdeuxcat{B}$ et $\gamma : f' x_{m,m-1} \dots x_{1,0} \Rightarrow f$ une \deux{}cellule dans $\mathdeuxcat{A}$, quadruplets satisfaisant l'égalité
$$
(p \CompDeuxZero (u (\gamma) \CompDeuxUn \DeuxCellStructComp{u}{f'}{x})) \CompDeuxUn (\alpha' \CompDeuxZero u(x_{m,m-1} \dots x_{1,0})) \CompDeuxUn (p'' \CompDeuxZero u_{x}) \CompDeuxUn \beta = \alpha
$$
(On a noté $\DeuxCellStructComp{u}{f'}{x}$ la \deux{}cellule structurale de composition $u_{f', x_{m,m-1} \dots x_{1,0}}$ de $u$, de source $u(f') u (x_{m,m-1} \dots x_{1,0})$ et de but $u(f' x_{m,m-1} \dots x_{1,0})$.) 
\item Si $([m], x, \beta, \gamma)$ et $([n], y, \sigma, \tau)$ sont deux \un{}cellules de $(a',p', f, \alpha)$ vers $(a'', p'', f', \alpha')$, les \deux{}cellules de $([m], x, \beta, \gamma)$ vers $([n], y, \sigma, \tau)$ sont données par les \deux{}cellules $(\varphi, \rho)$ de $([m],x)$ vers $([n],y)$ dans $\TildeLax{\mathdeuxcat{A}}$ satisfaisant 
$$
(p'' \CompDeuxZero (  (u(\rho_{n,n-1}) \CompDeuxZero \dots \CompDeuxZero u(\rho_{1,0})) \CompDeuxUn (u_{x_{\varphi(n-1)} \to \dots \to x_{m}} \CompDeuxZero \dots \CompDeuxZero u_{x_{0} \to \dots \to x_{\varphi(1)}})  )   ) \CompDeuxUn \beta = \sigma
$$
et
$$
\tau \CompDeuxUn (f' \CompDeuxZero \rho_{n} \CompDeuxZero \dots \CompDeuxZero \rho_{1}) = \gamma
$$
\end{itemize}

Dans cette \deux{}catégorie $\TrancheCoLax{(\TrancheLax{\TildeLax{\mathdeuxcat{A}}}{\BarreLax{u}}{b})}{\DeuxFoncTrancheLaxCoq{\StrictCanonique{\mathdeuxcat{A}}}{\sigma}{b}}{(a,p)}$ se distingue l'objet $(a, p, 1_{a}, p \CompDeuxZero \DeuxCellStructId{u}{a})$. 

Soit $(a'', p'' : u(a'') \to b, f' : a'' \to a, \alpha : p'' \Rightarrow p u(f'))$ un objet quelconque de la \deux{}catégorie $\TrancheCoLax{(\TrancheLax{\TildeLax{\mathdeuxcat{A}}}{\BarreLax{u}}{b})}{\DeuxFoncTrancheLaxCoq{\StrictCanonique{\mathdeuxcat{A}}}{\sigma}{b}}{(a,p)}$. Le quadruplet $([1], f', \alpha, 1_{f'})$ définit alors une \un{}cellule de $(a'', p'', f', \alpha)$ vers $(a, p, 1_{a}, p \CompDeuxZero \DeuxCellStructId{u}{a})$. En effet, la condition de commutativité à vérifier se simplifie ici en l'égalité
$$
(p \CompDeuxZero (\DeuxCellStructComp{u}{1_{a}}{f'}  \CompDeuxUn (\DeuxCellStructId{u}{a} \CompDeuxZero u(f')))) \CompDeuxUn \alpha = \alpha,
$$
qui est vérifiée en vertu de l'égalité
$$
\DeuxCellStructComp{u}{1_{a}}{f'}  \CompDeuxUn (  \DeuxCellStructId{u}{a} \CompDeuxZero u(f')  ) = 1_{u(f')}
$$

Supposons donné une \un{}cellule 
$$
([m], x, \alpha' : p'' \Rightarrow p u(x_{m,m-1}) \dots u(x_{1,0}), \gamma : x_{m,m-1}  \dots x_{1,0} \Rightarrow f')
$$
de $(a'', p'', f', \alpha)$ vers $(a, p, 1_{a}, p \CompDeuxZero \DeuxCellStructId{u}{a})$ dans 
$
\TrancheCoLax{(\TrancheLax{\TildeLax{\mathdeuxcat{A}}}{\BarreLax{u}}{b})}{\DeuxFoncTrancheLaxCoq{\StrictCanonique{\mathdeuxcat{A}}}{\sigma}{b}}{(a,p)}
$. La définition des \un{}cellules de cette catégorie assure l'égalité
$$
(p \CompDeuxZero u(\gamma)) \CompDeuxUn (p \CompDeuxZero \DeuxCellStructComp{u}{1_{a}}{x}) \CompDeuxUn (p \CompDeuxZero \DeuxCellStructId{u}{a} \CompDeuxZero u(x_{m,m-1} \dots x_{1,0})) \CompDeuxUn (p \CompDeuxZero u_{x}) \CompDeuxUn \alpha' = \alpha
$$
qui se récrit
$$
(p \CompDeuxZero (u(\gamma) \CompDeuxUn    
\DeuxCellStructComp{u}{1_{a}}{x} \CompDeuxUn    
(\DeuxCellStructId{u}{a} \CompDeuxZero u(x_{m,m-1} \dots x_{1,0}))   \CompDeuxUn  
u_{x})) 
\CompDeuxUn \alpha' = \alpha
$$
La composée
$$
\DeuxCellStructComp{u}{1_{a}}{x} \CompDeuxUn    
(\DeuxCellStructId{u}{a} \CompDeuxZero u(x_{m,m-1} \dots x_{1,0})) 
$$
est une identité. Ainsi, l'égalité 
$$
(p \CompDeuxZero (u(\gamma) \CompDeuxUn u_{x})) \CompDeuxUn \alpha' = \alpha
$$
fait partie des hypothèses. 

Une \deux{}cellule de $([m], x, \alpha', \gamma)$ vers $([1], f', \alpha, 1_{f'})$ dans 
$
\TrancheCoLax{(\TrancheLax{\TildeLax{\mathdeuxcat{A}}}{\BarreLax{u}}{b})}{\DeuxFoncTrancheLaxCoq{\StrictCanonique{\mathdeuxcat{A}}}{\sigma}{b}}{(a,p)}
$
correspond à un couple $(\varphi, \rho)$, où $\varphi : [1] \to [m]$ est un morphisme d'intervalles et $\rho$ une \deux{}cellule de $x_{m,m-1} \dots x_{1,0}$ vers $f'$ dans $\mathdeuxcat{A}$, telles que
$$
(p \CompDeuxZero (u(\rho) \CompDeuxUn u_{x})) \CompDeuxUn \alpha' = \alpha
$$
et
$$
1_{f'} \CompDeuxUn (1_{a} \CompDeuxZero \rho) = \gamma
$$
La seconde égalité force $\rho = \gamma$ et, par hypothèse, la première égalité est vérifiée si l'on fait $\rho = \gamma$. Cela termine la démonstration.
\end{proof}

\begin{lemme}\label{StrictInduitEquiFaible}
Soient $u : \mathdeuxcat{A} \to \mathdeuxcat{B}$ un \DeuxFoncteurLax{} et $b$ un objet de $\mathdeuxcat{B}$. Alors, le \DeuxFoncteurStrict{}
$\DeuxFoncTrancheLaxCoq{\StrictCanonique{\mathdeuxcat{A}}}{\sigma}{b} : \TrancheLax{\TildeLax{\mathdeuxcat{A}}}{\BarreLax{u}}{b} \to  \TrancheLax{\mathdeuxcat{A}}{u}{b}$ (voir l'énoncé du lemme \ref{StrictInduitAspherique}) est une équivalence faible.
\end{lemme}

\begin{proof}
C'est une conséquence immédiate du lemme \ref{StrictInduitAspherique}.
\end{proof}

\begin{lemme}\label{LaxInduitEquiFaible}
Soient $u : \mathdeuxcat{A} \to \mathdeuxcat{B}$ un \DeuxFoncteurLax{} et $b$ un objet de $\mathdeuxcat{B}$. Alors, le  \DeuxFoncteurLax{}
$\DeuxFoncTrancheLax{\LaxCanonique{\mathdeuxcat{A}}}{b} : \mathdeuxcat{A} \to \TildeLax{\mathdeuxcat{A}}$ induit par le diagramme commutatif
$$
\xymatrix{
\TildeLax{A}
\ar[dr]^{\BarreLax{u}}
\\
\mathdeuxcat{A}
\ar[u]^{\LaxCanonique{\mathdeuxcat{A}}}
\ar[r]_{u}
&\mathdeuxcat{B}
}
$$
est une équivalence faible. 
\end{lemme}

\begin{proof}
C'est une section du \DeuxFoncteurStrict{} $\DeuxFoncTrancheLaxCoq{\StrictCanonique{\mathdeuxcat{A}}}{\sigma}{b}$, qui est une équivalence faible (lemme \ref{StrictInduitEquiFaible}). 
\end{proof}

\begin{rem}
Les lemmes \ref{StrictInduitEquiFaible} et \ref{LaxInduitEquiFaible} généralisent les lemmes \ref{CouniteEquiFaible} et \ref{UniteEquiFaible}, sans que leur démonstration en dépende. On aurait donc pu s'abstenir d'énoncer ces cas particuliers. Les calculs s'avérant toutefois sensiblement plus pénibles dans le cas général, il nous paraissait plus naturel, et surtout plus pédagogique, de procéder ainsi. 
\end{rem}

\section{Le cas général}\label{SectionCasGeneral}

\begin{lemme}\label{BarreLaxUV}
Soient $u : \mathdeuxcat{A} \to \mathdeuxcat{B}$ et $v : \mathdeuxcat{B} \to \mathdeuxcat{C}$ deux \DeuxFoncteursLax{}. Alors, $\BarreLax{v} \TildeLax{u} = \BarreLax{vu}$.
\end{lemme}

\begin{proof}
En vertu de la remarque \ref{PropUnivBarre}, comme $\BarreLax{v} \TildeLax{u}$ est un \DeuxFoncteurStrict{}, cela résulte de la suite d'égalités $\BarreLax{v} \TildeLax{u} \LaxCanonique{\mathdeuxcat{A}} = \BarreLax{v} \LaxCanonique{\mathdeuxcat{B}} u = vu$.
\end{proof}

\begin{paragr}
Soient $u$ et $v$ deux \DeuxFoncteursLax{} de $\mathdeuxcat{A}$ vers $\mathdeuxcat{B}$, et $\sigma$ une \DeuxTransformationLax{} (\emph{resp.} une \DeuxTransformationOpLax{}) de $u$ vers $v$. Ces données permettent de définir une \DeuxTransformationLax{} (\emph{resp.} une \DeuxTransformationOpLax{}) $\BarreLax{\sigma}$ de $\BarreLax{u}$ vers $\BarreLax{v}$, comme suit. 

Pour tout objet $a$ de $\mathdeuxcat{A}$, on pose $\BarreLax{\sigma}_{a} = \sigma_{a}$ et $\BarreLax{\sigma}_{([0], a)} = 1_{\sigma_{a}}$. 

Pour toute \un{}cellule $([m], x)$ de $a$ vers $a'$ dans $\TildeLax{\mathdeuxcat{A}}$ avec $m \geq 1$, l'on définit une \deux{}cellule $\sigma_{([m], x)}$ comme suit. Si $m = 1$, $\BarreLax{\sigma}_{([m], x)} = \BarreLax{\sigma}_{([1], x_{1,0})} = \sigma_{x_{1, 0}}$. Si $m \geq 2$,
$$
\BarreLax{\sigma}_{([m], x)} = ((v(x_{m, m-1}) \dots v(x_{2, 1})) \CompDeuxZero \sigma_{x_{1, 0}}) \CompDeuxUn (\BarreLax{\sigma}_{([m-1], (x_{m, m-1}, \dots, x_{2, 1}))} \CompDeuxZero u(x_{1, 0}))
$$
si $\sigma$ est une \DeuxTransformationLax{}, et 
$$
\BarreLax{\sigma}_{([m], (x))} = (\sigma_{x_{m, m-1}} \CompDeuxZero (u(x_{m-1, m-2}) \dots u(x_{1,0}))) \CompDeuxZero (v(x_{m, m-1}) \CompDeuxZero \BarreLax{\sigma}_{([m-1], (x_{m-1, m-2}, \dots, x_{1, 0}))})
$$
si $\sigma$ est une \DeuxTransformationOpLax{}.
\end{paragr}

\begin{lemme}\label{BarreLaxDeuxTrans}
Étant donné deux \DeuxFoncteursLax{} parallèles $u$ et $v$ et une \DeuxTransformationLax{} (\emph{resp.} \DeuxTransformationOpLax{}) $\sigma : u \Rightarrow v$, le procédé décrit ci-dessus définit une \DeuxTransformationLax{} (\emph{resp.} \DeuxTransformationOpLax{}) $\BarreLax{\sigma} : \BarreLax{u} \Rightarrow \BarreLax{v}$.
\end{lemme}

\begin{proof}
Les vérifications sont laissées au lecteur.
\end{proof}

\begin{lemme}\label{CarreCommutatifThALax}
Soient 
$$
\xymatrix{
\mathdeuxcat{A}  
\ar[rr]^{u} 
\ar[dr]_{w} 
&&\mathdeuxcat{B}
\dtwocell<\omit>{<7.3>\sigma} 
\ar[dl]^{v}
\\ 
& 
\mathdeuxcat{C}
&{}
}
$$
un diagramme dans lequel $u$, $v$ et $w$ sont des \DeuxFoncteursLax{} et $\sigma$ est une \DeuxTransformationOpLax{}, $c$ un objet de $\mathdeuxcat{C}$, $\DeuxFoncTrancheLaxCoq{u}{\sigma}{c} : \TrancheLax{\mathdeuxcat{A}}{w}{c} \to \TrancheLax{\mathdeuxcat{B}}{v}{c}$ le \DeuxFoncteurLax{} induit par ces données, et $\DeuxFoncTrancheLaxCoq{\TildeLax{u}}{\BarreLax{\sigma}}{c}$ le \DeuxFoncteurStrict{} induit par le diagramme
$$
\xymatrix{
\TildeLax{\mathdeuxcat{A}}
\ar[rr]^{\TildeLax{u}}
\ar[dr]_{\BarreLax{w}}
&&\TildeLax{\mathdeuxcat{B}}
\dtwocell<\omit>{<7.3>\BarreLax{\sigma}} 
\ar[dl]^{\BarreLax{v}}
\\
&\mathdeuxcat{C}
&{}
}
$$
(voir les lemmes \ref{BarreLaxUV} et \ref{BarreLaxDeuxTrans}). Alors, le diagramme
$$
\xymatrix{
\TrancheLax{\TildeLax{\mathdeuxcat{A}}}{\BarreLax{w}}{c}
\ar[rr]^{\DeuxFoncTrancheLaxCoq{\TildeLax{u}}{\BarreLax{\sigma}}{c}}
&&\TrancheLax{\TildeLax{\mathdeuxcat{B}}}{\BarreLax{v}}{c}
\\
\TrancheLax{\mathdeuxcat{A}}{w}{c}
\ar[u]^{\DeuxFoncTrancheLax{\LaxCanonique{\mathdeuxcat{A}}}{c}}
\ar[rr]_{\DeuxFoncTrancheLaxCoq{u}{\sigma}{c}}
&&\TrancheLax{\mathdeuxcat{B}}{v}{c}
\ar[u]_{\DeuxFoncTrancheLax{\LaxCanonique{\mathdeuxcat{B}}}{c}}
}
$$
est commutatif. 
\end{lemme}

\begin{proof}
Les calculs ne présentent aucune difficulté.
\end{proof}

\begin{corollaire}\label{TexAvery}
Soient 
$$
\xymatrix{
\mathdeuxcat{A}  
\ar[rr]^{u} 
\ar[dr]_{w} 
&&\mathdeuxcat{B}
\dtwocell<\omit>{<7.3>\sigma} 
\ar[dl]^{v}
\\ 
& 
\mathdeuxcat{C}
&{}
}
$$
un diagramme dans lequel $u$, $v$ et $w$ sont des \DeuxFoncteursLax{} et $\sigma$ est une \DeuxTransformationOpLax{} et $c$ un objet de $\mathdeuxcat{C}$. Alors, le \DeuxFoncteurLax{} $\DeuxFoncTrancheLaxCoq{u}{\sigma}{c} : \TrancheLax{\mathdeuxcat{A}}{w}{c} \to \TrancheLax{\mathdeuxcat{B}}{v}{c}$ est une équivalence faible si et seulement si le \DeuxFoncteurStrict{} $\DeuxFoncTrancheLaxCoq{\TildeLax{u}}{\BarreLax{\sigma}}{c} : \TrancheLax{\TildeLax{\mathdeuxcat{A}}}{\BarreLax{w}}{c} \to \TrancheLax{\TildeLax{\mathdeuxcat{B}}}{\BarreLax{v}}{c}$ en est une. 
\end{corollaire}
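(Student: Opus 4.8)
Le plan est de d\'eduire ce corollaire de mani\`ere purement formelle, l'essentiel du contenu ayant d\'ej\`a \'et\'e \'etabli dans les lemmes qui pr\'ec\`edent. Le lemme \ref{CarreCommutatifThALax} fournit un carr\'e commutatif dont les fl\`eches horizontales sont les deux \DeuxFoncteursLax{} dont on veut comparer le caract\`ere d'\'equivalence faible, et dont les fl\`eches verticales sont $\DeuxFoncCommaLax{\LaxCanonique{\mathdeuxcat{A}}}{c}$ et $\DeuxFoncCommaLax{\LaxCanonique{\mathdeuxcat{B}}}{c}$. La premi\`ere observation, qui constitue le point central de l'argument, est que ces deux fl\`eches verticales sont des \'equivalences faibles : cela r\'esulte du lemme \ref{LaxInduitEquiFaible}, appliqu\'e respectivement \`a $w$ et \`a $v$ en l'objet $c$.

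La commutativit\'e du carr\'e (lemme \ref{CarreCommutatifThALax}) s'\'ecrit
$$
\DeuxFoncCommaLaxCoq{\TildeLax{u}}{\BarreLax{\sigma}}{c} \circ \DeuxFoncCommaLax{\LaxCanonique{\mathdeuxcat{A}}}{c} = \DeuxFoncCommaLax{\LaxCanonique{\mathdeuxcat{B}}}{c} \circ \DeuxFoncCommaLaxCoq{u}{\sigma}{c}.
$$
Supposons d'abord que $\DeuxFoncCommaLaxCoq{u}{\sigma}{c}$ soit une \'equivalence faible. Le membre de droite est alors une compos\'ee de deux \'equivalences faibles, donc une \'equivalence faible en vertu de la propri\'et\'e \og de $2$ sur $3$\fg{}. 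Le membre de gauche en est par cons\'equent une \'egalement ; comme $\DeuxFoncCommaLax{\LaxCanonique{\mathdeuxcat{A}}}{c}$ est une \'equivalence faible, une seconde application de la propri\'et\'e \og de $2$ sur $3$\fg{} montre que $\DeuxFoncCommaLaxCoq{\TildeLax{u}}{\BarreLax{\sigma}}{c}$ en est une. La r\'eciproque se traite sym\'etriquement : si $\DeuxFoncCommaLaxCoq{\TildeLax{u}}{\BarreLax{\sigma}}{c}$ est une \'equivalence faible, le membre de gauche de l'\'egalit\'e ci-dessus en est une, donc le membre de droite aussi, et le fait que $\DeuxFoncCommaLax{\LaxCanonique{\mathdeuxcat{B}}}{c}$ soit une \'equivalence faible force alors $\DeuxFoncCommaLaxCoq{u}{\sigma}{c}$ \`a en \^etre une.

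Je ne pr\'evois aucune difficult\'e s\'erieuse dans cette d\'emonstration, qui se r\'eduit \`a une double application de la propri\'et\'e \og de $2$ sur $3$\fg{} ; la v\'eritable substance a \'et\'e absorb\'ee en amont, notamment dans les calculs du lemme \ref{StrictInduitAspherique} sous-jacents au lemme \ref{LaxInduitEquiFaible}. Le seul point m\'eritant une v\'erification est que toutes les fl\`eches en pr\'esence sont des \DeuxFoncteursLax{}, de sorte que la saturation faible de la classe des \'equivalences faibles s'applique bien dans $\DeuxCatLax$ ; c'est imm\'ediat, $\DeuxFoncCommaLaxCoq{\TildeLax{u}}{\BarreLax{\sigma}}{c}$ \'etant de surcro\^it un \DeuxFoncteurStrict{}.
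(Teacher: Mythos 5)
Votre démonstration est correcte et suit essentiellement la même voie que celle du texte : invocation du lemme \ref{LaxInduitEquiFaible} pour établir que les flèches verticales du carré du lemme \ref{CarreCommutatifThALax} sont des équivalences faibles, puis conclusion par saturation faible (double application de la propriété \og de $2$ sur $3$\fg{}). Vous ne faites qu'expliciter les deux applications de cette propriété que le texte laisse implicites.
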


\begin{proof}
En vertu du lemme \ref{LaxInduitEquiFaible}, les \DeuxFoncteursLax{} $\DeuxFoncTrancheLax{\LaxCanonique{\mathdeuxcat{A}}}{c}$ et $\DeuxFoncTrancheLax{\LaxCanonique{\mathdeuxcat{B}}}{c}$ sont des équivalences faibles. Le lemme \ref{CarreCommutatifThALax} et la saturation faible de la classe des équivalences faibles permettent de conclure.
\end{proof}

\begin{theo}\label{ThALaxTrancheLaxCoq}
Soit 
$$
\xymatrix{
\mathdeuxcat{A}  
\ar[rr]^{u} 
\ar[dr]_{w} 
&&\mathdeuxcat{B}
\dtwocell<\omit>{<7.3>\sigma} 
\ar[dl]^{v}
\\ 
& 
\mathdeuxcat{C}
&{}
}
$$
un diagramme dans lequel $u$, $v$ et $w$ sont des \DeuxFoncteursLax{} et $\sigma$ est une \DeuxTransformationOpLax{}. Supposons que, pour tout objet $c$ de $\mathdeuxcat{C}$, le \DeuxFoncteurLax{} 
$$
\DeuxFoncTrancheLaxCoq{u}{\sigma}{c} : \TrancheLax{\mathdeuxcat{A}}{w}{c} \to \TrancheLax{\mathdeuxcat{B}}{v}{c}
$$ 
soit une équivalence faible. Alors $u$ est une équivalence faible.
\end{theo}

\begin{proof}
En vertu des hypothèses et du corollaire \ref{TexAvery}, pour tout objet $c$ de $\mathdeuxcat{C}$, le \DeuxFoncteurStrict{} $\DeuxFoncTrancheLaxCoq{\TildeLax{u}}{\BarreLax{\sigma}}{c}$ est une équivalence faible. En vertu du théorème \ref{ThAStrictCoq}, $\TildeLax{u}$ est une équivalence faible, donc $u$ est une équivalence faible (proposition \ref{EquiLaxTildeBarreLax}).
\end{proof}

\begin{rem}
L'énoncé du théorème \ref{ThALaxTrancheLaxCoq} admet bien entendu trois versions duales. Elles correspondent aux cas suivants :
\begin{itemize}
\item $u$, $v$ et $w$ sont des \DeuxFoncteursLax{} et $\sigma$ une \DeuxTransformationLax{} de $w$ vers $vu$ ;
\item $u$, $v$ et $w$ sont des \DeuxFoncteursCoLax{} et $\sigma$ une \DeuxTransformationLax{} de $vu$ vers $w$ ;
\item $u$, $v$ et $w$ sont des \DeuxFoncteursCoLax{} et $\sigma$ une \DeuxTransformationOpLax{} de $w$ vers $vu$. 
\end{itemize}
\end{rem}

\section{Les 2-catégories modélisent les types d'homotopie}\label{SectionEqCatLoc}

Comme annoncé, nous démontrons dans cette section que la catégorie localisée de $\DeuxCat$ par les équivalences faibles considérées est équivalente à la catégorie homotopique classique. Nous avons déjà mentionné que Ara et Maltsiniotis \cite{AraMaltsiniotis} utilisent ce résultat pour établir l'existence d'une \emph{équivalence} de Quillen entre $\DeuxCat$ munie de la structure de catégorie de modèles « à la Thomason » qu'ils construisent et la catégorie des ensembles simpliciaux munie de sa structure de catégorie de modèles classique. 

\begin{df}
Étant donné des \DeuxFoncteurLax{} $u$ et $v$ de $\mathdeuxcat{A}$ vers $\mathdeuxcat{B}$, une \DeuxTransformationLax{} (\emph{resp.} \DeuxTransformationOpLax{}) $\sigma : u \Rightarrow v$ sera dite \emph{relative aux objets} si $\sigma_{a} : u(a) \to v(a)$ est une identité pour tout objet $a$ de $\mathdeuxcat{A}$.
\end{df}

\begin{df}
On note 
$$
\underline{Fon}([m], \mathdeuxcat{A})
$$
la catégorie dont les objets sont les \DeuxFoncteursStricts{} de $[m]$ vers $\mathdeuxcat{A}$ et dont les morphismes sont les \DeuxTransformationsLax{} relatives aux objets. Autrement dit, $\underline{Fon}([m], \mathdeuxcat{A})$ est la catégorie 
$$
\coprod_{\substack{(a_{0}, \dots, a_{m}) \in (\Objets{\mathdeuxcat{A}})^{m+1}}} \CatHom{\mathdeuxcat{A}}{a_{0}}{a_{1}} \times \dots \times \CatHom{\mathdeuxcat{A}}{a_{m-1}}{a_{m}}
$$
\end{df}

\begin{df}\label{DefNerfHom}
On note 
$\NerfHom{\mathdeuxcat{A}}$ l'objet simplicial de $\Cat$ défini par 
$$
\begin{aligned}
\NerfHom{\mathdeuxcat{A}} : \Delta^{op} &\to \Cat
\\
m &\mapsto \underline{Fon}([m], \mathdeuxcat{A})
\end{aligned}
$$
et dont les faces et dégénérescences sont définies de façon « évidente ». 

Cela permet de définir un foncteur
$$
\NerfHom : \DeuxCat \to \CatHom{CAT}{\Delta^{op}}{\Cat} 
$$
\end{df}

\begin{df}\label{DefOpIntegrale}
Pour tout foncteur $F : \DeuxCatUnOp{A} \to \Cat$, on pose
$$
{\DeuxInt{}}^{op}F = \left(\DeuxInt{} (?^{op} F)\right)^{op}
$$
où $?^{op}$ désigne l'automorphisme de $\Cat$ qui, à toute catégorie, associe sa catégorie opposée.
\end{df}

Ainsi, ${\DeuxInt{}}^{op}F$ n'est autre que la catégorie notée $\nabla{} F$ dans \cite[section 2.2.6]{THG}. De façon plus explicite, ses objets sont les couples $(a,x)$, avec $a$ un objet de $A$ et $x$ un objet de $F(a)$, et les morphismes de $(a,x)$ vers $(a', x')$ sont les couples $(f, r)$ avec $f : a \to a'$ dans $A$ et $r : x \to F(f)(x')$ dans $F(a)$.

\begin{df}
Soit $\mathdeuxcat{A}$ une \deux{}catégorie. On définit un \DeuxFoncteurLax{}
$$
\SupHomObjet{\mathdeuxcat{A}} : {\DeuxInt{}}^{op} \NerfHom{\mathdeuxcat{A}} \to \mathdeuxcat{A}
$$
par les données suivantes.

Si $([m], (x_{i,i-1})_{1 \leq i \leq m})$ est un objet de ${\DeuxInt{}}^{op} \NerfHom{\mathdeuxcat{A}}$, on pose 
$$
\SupHomObjet{\mathdeuxcat{A}}([m], (x_{i,i-1})_{1 \leq i \leq m}) = x_{m}
$$

Un morphisme de $([m], (x_{i,i-1})_{1 \leq i \leq m})$ vers $([n], (y_{j,j-1})_{1 \leq j \leq n})$ dans ${\DeuxInt{}}^{op} \NerfHom{\mathdeuxcat{A}}$ est de la forme $(\varphi : [m] \to [n], (\alpha_{i})_{1 \leq i \leq m})$, où $\alpha_{i} : x_{i,i-1} \Rightarrow y_{\varphi(i), \varphi(i)-1} \dots y_{\varphi(i-1)+1, \varphi(i-1)}$ est une \deux{}cellule de $\mathdeuxcat{A}$. L'image d'un tel morphisme $(\varphi : [m] \to [n], (\alpha_{i})_{1 \leq i \leq m})$ par $\SupHomObjet{\mathdeuxcat{A}}$ est définie par
$$
\SupHomObjet{\mathdeuxcat{A}} (\varphi : [m] \to [n], (\alpha_{i})_{1 \leq i \leq m}) = y_{n,n-1} \dots y_{\varphi(m)+1, \varphi(m)}
$$

Pour tout objet $([m], (x_{i,i-1})_{1 \leq i \leq m})$ de ${\DeuxInt{}}^{op} \NerfHom{\mathdeuxcat{A}}$, on pose
$$
\DeuxCellStructId{\SupHomObjet{\mathdeuxcat{A}}}{([m], (x_{i,i-1})_{1 \leq i \leq m})} = 1_{1_{x_{m}}} 
$$

Pour tout couple de morphismes composables $(\varphi : [m] \to [n], (\alpha_{i})_{1 \leq i \leq m}) : ([m], (x_{i,i-1})_{1 \leq i \leq m}) \to ([n], (y_{j,j-1})_{1 \leq j \leq n})$ et $(\psi : [n] \to [p], (\beta_{j})_{1 \leq j \leq n}) : ([n], (y_{j,j-1})_{1 \leq j \leq n}) \to ([p], (z_{k,k-1})_{1 \leq k \leq p})$ de ${\DeuxInt{}}^{op} \NerfHom{\mathdeuxcat{A}}$, on pose
$$
\DeuxCellStructComp{\SupHomObjet{\mathdeuxcat{A}}}{(\psi, (\beta_{j})_{1 \leq j \leq n})}{(\varphi, (\alpha_{i})_{1 \leq i \leq m})} = z_{p,p-1} \dots z_{\psi(n)+1, \psi(n)} \CompDeuxZero \beta_{n} \CompDeuxZero \dots \CompDeuxZero \beta_{\varphi(m)+1}
$$
\end{df}

Les propositions \ref{PropDelHoyo} et \ref{SupHomEquiFaible} ne sont autres que \cite[théorème 9.2.4]{TheseDelHoyo} (c'est aussi \cite[théorème 7.3]{ArticleDelHoyo}), aux contextes et notations près. On y renvoie le lecteur pour la démonstration de la proposition \ref{PropDelHoyo}. 

\begin{prop}[Del Hoyo]\label{PropDelHoyo}
Pour toute \deux{}catégorie $\mathdeuxcat{A}$, pour tout objet $a$ de $\mathdeuxcat{A}$, la catégorie
$$
\TrancheLax{\left({\DeuxInt{}}^{op} \NerfHom \mathdeuxcat{A}\right)}{\SupHom_{\mathdeuxcat{A}}}{a}
$$
est asphérique. 
\end{prop}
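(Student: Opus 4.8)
The plan is to exploit the fact that $\mathdeuxcat{D} := {\DeuxInt{}}^{op} \NerfHom \mathdeuxcat{A}$ is a $1$\nobreakdash-category, so the lax comma category $\mathcal{E} := \CommaLax{\mathdeuxcat{D}}{\SupHom_{\mathdeuxcat{A}}}{a}$ has only identity $2$\nobreakdash-cells and is itself a $1$\nobreakdash-category. Unwinding the definitions, an object of $\mathcal{E}$ is a chain $x_{0} \to \dots \to x_{m}$ in $\mathdeuxcat{A}$ together with a $1$\nobreakdash-cell $p : x_{m} \to a$ — equivalently a chain $x_{0} \to \dots \to x_{m} \to a$ terminating at $a$ — while a morphism towards $(([n], (y_{j,j-1})), q)$ consists of a map $\varphi : [m] \to [n]$ of $\Delta$, of $2$\nobreakdash-cells $\alpha_{i}$ from $x_{i,i-1}$ to the relevant composite of the $y$, and of a $2$\nobreakdash-cell $\alpha : p \Rightarrow q \CompHor \SupHom_{\mathdeuxcat{A}}(\varphi, (\alpha_{i}))$. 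Since $\mathcal{E}$ is a $1$\nobreakdash-category, showing it asph\'erique amounts to showing its nerve contractible, and for this I would manufacture a contraction through natural transformations, invoking lemmas \ref{DeuxTransFoncLax} and \ref{HomotopieW} to convert each natural transformation into a statement about \'equivalences faibles.

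The central device is an endofunctor $T : \mathcal{E} \to \mathcal{E}$ that \emph{appends $p$ to the chain}: on objects
$$
T\big( ([m], (x_{1,0}, \dots, x_{m,m-1})), p \big) = \big( ([m+1], (x_{1,0}, \dots, x_{m,m-1}, p)), 1_{a} \big),
$$
the new last $1$\nobreakdash-cell being $p$ and the new comma datum being $1_{a}$. On a morphism $(\varphi, (\alpha_{i}), \alpha)$ one extends $\varphi$ into $[m+1] \to [n+1]$ by sending $m+1$ to $n+1$, keeps the $\alpha_{i}$, and — this is the pleasant point — uses the comma $2$\nobreakdash-cell $\alpha$ itself as the new last structural $2$\nobreakdash-cell $\alpha_{m+1}$, the comma part of $T(g)$ reducing to an identity. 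I would also single out the object $z_{0}$ given by the length-zero chain $(a)$ at $a$ equipped with $p = 1_{a}$, together with the constant functor $C_{z_{0}}$ on it.

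I would then build two natural transformations. The first, $\eta : 1_{\mathcal{E}} \Rightarrow T$, has component at $e = (([m],(x)), p)$ the inclusion $\iota : [m] \hookrightarrow [m+1]$ (missing the last vertex) equipped with identity $2$\nobreakdash-cells and comma datum $1_{p}$, which one checks is a well-defined morphism $e \to T(e)$. The second, $\theta : C_{z_{0}} \Rightarrow T$, has component the \emph{last-vertex} inclusion $[0] \to [m+1]$, $0 \mapsto m+1$, landing on the final object $a$ of the extended chain, with all data identities. Granting naturality, lemmas \ref{DeuxTransFoncLax} and \ref{HomotopieW} give: from $\eta$, since $1_{\mathcal{E}}$ is an \'equivalence faible (FS1), so is $T$; from $\theta$, since $T$ is, so is $C_{z_{0}}$. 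Writing $C_{z_{0}} = j \CompHor \pi$ with $\pi : \mathcal{E} \to \DeuxCatPonct$ the projection and $j$ the inclusion of $z_{0}$, and noting $\pi \CompHor j = 1_{\DeuxCatPonct}$, the weak saturation property FS3 yields that $\pi$ is an \'equivalence faible, i.e. $\mathcal{E}$ is asph\'erique.

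The main obstacle is purely computational: verifying that $T$ respects composition and that $\eta$ and $\theta$ are natural. Both reductions rest on the ``generalized cocycle'' coherence satisfied by the structural $2$\nobreakdash-cells of $\SupHom_{\mathdeuxcat{A}}$ (and, underneath, of $u$), exactly in the spirit of the computations in the proof of lemma \ref{StrictInduitAspherique}. I would also stress why the detour through $T$ is unavoidable: the direction of the morphisms of $\mathcal{E}$ forbids any direct natural transformation between $1_{\mathcal{E}}$ and $C_{z_{0}}$, since an arbitrary chain $x_{0} \to \dots \to x_{m}$ need not pass through $a$; appending $p$ first is precisely what makes the last vertex equal to $a$ and thereby makes the last-vertex comparison $\theta$ available.
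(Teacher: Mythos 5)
Your proof is correct and coincides with the paper's own argument: the endofunctor $T$ you construct (appending $p$ to the chain and resetting the comma datum to $1_{a}$) is exactly the paper's endofunctor $D$, your $C_{z_{0}}$ is its constant functor $K_{a}$, and the two natural transformations induced by the inclusions $[m] \to [m+1]$ and $[0] \to [m+1]$, $0 \mapsto m+1$, are precisely those used there. The paper merely states the conclusion ("ce qui permet de conclure") where you spell out the passage through les lemmes \ref{DeuxTransFoncLax} et \ref{HomotopieW} and the weak saturation axiom FS3, which is the intended reasoning.
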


En vertu du théorème \ref{ThALaxTrancheLaxCoq}, on en déduit la proposition \ref{SupHomEquiFaible}.  

\begin{prop}[Del Hoyo]\label{SupHomEquiFaible}
Pour toute \deux{}catégorie $\mathdeuxcat{A}$, le \DeuxFoncteurLax{} 
$$
\SupHom_{\mathdeuxcat{A}} : {\DeuxInt{}}^{op} \NerfHom \mathdeuxcat{A} \to \mathdeuxcat{A}
$$
est une équivalence faible. 
\end{prop}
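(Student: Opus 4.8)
Le plan consiste \`a appliquer le th\'eor\`eme \ref{ThALaxCommaLaxCoq} dans son cas ``absolu''. Pr\'ecis\'ement, je prendrais le diagramme obtenu en posant $\mathdeuxcat{C} = \mathdeuxcat{B} = \mathdeuxcat{A}$, $v = 1_{\mathdeuxcat{A}}$, $w = u = \SupHom_{\mathdeuxcat{A}}$ et $\sigma$ l'identit\'e (qui est bien une \DeuxTransformationOpLax{}, puisqu'elle est stricte). Le triangle commute alors trivialement, et pour conclure que $\SupHom_{\mathdeuxcat{A}}$ est une \'equivalence faible il suffira de v\'erifier l'hypoth\`ese du th\'eor\`eme, \`a savoir que pour tout objet $a$ de $\mathdeuxcat{A}$ le \DeuxFoncteurLax{} induit
$$
\CommaLax{({\DeuxInt{}}^{op} \NerfHom \mathdeuxcat{A})}{\SupHom_{\mathdeuxcat{A}}}{a} \to \CommaLax{\mathdeuxcat{A}}{}{a}
$$
est une \'equivalence faible.

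L'\'etape cl\'e consistera \`a observer que la source et le but de ce \DeuxFoncteurLax{} sont tous deux asph\'eriques. Pour la source, c'est exactement le contenu de la proposition \ref{PropDelHoyo}, qui aura fourni l'essentiel du travail. Pour le but, $\CommaLax{\mathdeuxcat{A}}{}{a}$ admet un objet homotopiquement cofinal d'apr\`es l'exemple \ref{ExemplesOF}, donc est asph\'erique en vertu du lemme \ref{OFAspherique}.

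Il restera alors \`a conclure par un argument de ``$2$ sur $3$''. En effet, dans le triangle form\'e par le \DeuxFoncteurLax{} induit et les deux \DeuxFoncteursStricts{} canoniques de chacune de ces \deux{}cat\'egories vers la \deux{}cat\'egorie ponctuelle $\DeuxCatPonct$, les deux seconds sont des \'equivalences faibles par asph\'ericit\'e ; la propri\'et\'e FS2 force donc le \DeuxFoncteurLax{} induit \`a \^etre une \'equivalence faible. Le th\'eor\`eme \ref{ThALaxCommaLaxCoq} permet enfin de conclure.

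La difficult\'e r\'eelle ne r\'eside pas dans cet argument final, qui est purement formel, mais bien dans la proposition \ref{PropDelHoyo} d\'ej\`a \'etablie : c'est l\`a qu'est le c\oe ur de la preuve (construction des endofoncteurs $K_{a}$ et $D$ et des transformations naturelles les reliant \`a l'identit\'e). Le seul point auquel je pr\^eterais attention ici est de m'assurer que le th\'eor\`eme \ref{ThALaxCommaLaxCoq} s'applique \`a un \DeuxFoncteurLax{} \og non strict \fg{} comme $\SupHom_{\mathdeuxcat{A}}$, ce qui interdit pr\'ecis\'ement de recourir directement au th\'eor\`eme \ref{ThABC} de Bullejos-Cegarra et justifie le passage par la g\'en\'eralisation lax.
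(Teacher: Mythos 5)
Votre démonstration est correcte et suit essentiellement la même voie que celle de l'article, dont la preuve très concise (« conséquence de la proposition \ref{PropDelHoyo} et du théorème \ref{ThALaxCommaLaxCoq} ») recouvre exactement l'argument que vous explicitez : application du théorème \ref{ThALaxCommaLaxCoq} dans le cas absolu, asphéricité de la source par la proposition \ref{PropDelHoyo} et du but par l'exemple \ref{ExemplesOF} et le lemme \ref{OFAspherique}, puis conclusion par « $2$ sur $3$ ». Votre remarque finale sur la nécessité de la version lax du Théorème A (le \DeuxFoncteurLax{} $\SupHom_{\mathdeuxcat{A}}$ n'étant pas strict) correspond d'ailleurs à ce que l'article souligne lui-même dans son introduction.
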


\begin{paragr}
On note $\UnLocFond{W}$ la classe des foncteurs (morphismes de $Cat$) dont le nerf est une équivalence faible simpliciale. Autrement dit, $W = \DeuxLocFond{W} \cap \UnCell{\Cat}$. Pour tout morphisme $u : A \to B$ de $\Cat$, on note $\Delta / Nu : \Delta / NA \to \Delta / NB$ l'image de son nerf par le foncteur « catégorie des éléments ». Ce n'est donc rien d'autre que $\DeuxInt{}^{op}{\NerfLax{u}}$ avec les notations introduites plus haut. Si l'on note maintenant (toujours pour distinguer du cas général des \deux{}catégories) $sup_{A} : \Delta / NA \to A$ le foncteur $\SupHom_{A}$, les flèches horizontales du diagramme commutatif
$$
\xymatrix{
\Delta/NA
\ar[d]_{\Delta / Nu}
\ar[rr]^{sup_{A}}
&&A
\ar[d]^{u}
\\
\Delta/NB
\ar[rr]_{sup_{B}}
&&B
}
$$ 
sont des équivalences faibles (il s'agit d'un résultat classique que généralise la proposition \ref{SupHomEquiFaible}).
\end{paragr}

\begin{theo}\label{EqCatLocCatDeuxCat}
L'inclusion $\Cat \hookrightarrow \DeuxCat$ induit une équivalence de catégories entre les catégories localisées $\Localisation{\Cat}{\UnLocFond{W}}$ et $\Localisation{\DeuxCat}{\DeuxLocFond{W}}$. On a donc des équivalences de catégories entre catégories localisées
$$
\Localisation{\DeuxCatLax}{\DeuxLocFondLaxInduit{W}} \simeq \Localisation{\DeuxCat}{\DeuxLocFond{W}} \simeq \Localisation{\Cat}{\UnLocFond{W}} \simeq \Hot
$$
\end{theo}

\begin{proof}
On a déjà montré la première équivalence (voir le théorème \ref{EqCatLocDeuxCatDeuxCatLax}). La troisième est bien connue. Il reste à démontrer la première assertion de l'énoncé. 

Soit $u : \mathdeuxcat{A} \to \mathdeuxcat{B}$ un \DeuxFoncteurStrict{}. Considérons le diagramme
$$
\xymatrix{
{\DeuxInt{}}^{op} \NerfHom \mathdeuxcat{A}
\ar[d]_{{\DeuxInt{}}^{op} \NerfHom {u}}
&&\TildeLax{{\DeuxInt{}}^{op} \NerfHom \mathdeuxcat{A}}
\ar[ll]_{\StrictCanonique{{\DeuxInt{}}^{op} \NerfHom \mathdeuxcat{A}}}
\ar[rr]^{\BarreLax{\SupHom_{\mathdeuxcat{A}}}}
\ar[d]^{\TildeLax{{\DeuxInt{}}^{op} \NerfHom {u}}}
&&\mathdeuxcat{A}
\ar[d]^{u}
\\
{\DeuxInt{}}^{op} \NerfHom \mathdeuxcat{B}
&&\TildeLax{{\DeuxInt{}}^{op} \NerfHom \mathdeuxcat{B}}
\ar[ll]^{\StrictCanonique{{\DeuxInt{}}^{op} \NerfHom \mathdeuxcat{B}}}
\ar[rr]_{\BarreLax{\SupHom_{\mathdeuxcat{B}}}}
&&\mathdeuxcat{B}
}
$$
Les flèches horizontales y figurant sont des équivalences faibles en vertu des propositions \ref{CouniteEquiFaible}, \ref{EquiLaxTildeBarreLax} et \ref{SupHomEquiFaible}. 

Il reste à vérifier la commutativité du diagramme ci-dessus. On remarque d'abord que le carré de gauche est commutatif par naturalité de $\TransStrictCanonique$ (voir le lemme \ref{CouniteNaturelle}). Pour établir la commutativité du carré de droite, il suffit (voir la remarque \ref{PropUnivBarre}) de vérifier l'égalité 
$$
u \phantom{a} \BarreLax{\SupHom_{\mathdeuxcat{A}}} \phantom{a} \LaxCanonique{\DeuxIntOp{\Delta} \NerfHom \mathdeuxcat{A}} = \BarreLax{\SupHom_{\mathdeuxcat{B}}} \phantom{a} \TildeLax{\DeuxIntOp{\Delta} \NerfHom (u)} \phantom{a} \LaxCanonique{\DeuxIntOp{\Delta} \NerfHom \mathdeuxcat{A}}
$$
ce qui se récrit 
$$
u \phantom{a} \SupHom_{\mathdeuxcat{A}} =  \SupHom_{\mathdeuxcat{B}} \phantom{a} \DeuxIntOp{\Delta} \NerfHom (u)
$$
Cette dernière égalité se vérifie directement.

On déduit de cette observation et du rappel précédant l'énoncé du théorème \ref{EqCatLocCatDeuxCat} que l'inclusion $\Cat \hookrightarrow \DeuxCat$ et le foncteur
$$
\begin{aligned}
\DeuxCat &\to \Cat
\\
\mathdeuxcat{A} &\mapsto {\DeuxInt{}}^{op} \NerfHom \mathdeuxcat{A}
\\
u &\mapsto {\DeuxInt{}}^{op} \NerfHom (u)
\end{aligned}
$$
induisent des équivalences quasi-inverses l'une de l'autre entre les catégories localisées de $\Cat$ et $\DeuxCat$ relativement à $W$ et $\DeuxLocFond{W}$ respectivement.
\end{proof}

\begin{rem}\label{RemarqueDelHoyoAuraitPu}
Comme nous l'avons dit, l'équivalence $\Localisation{\DeuxCat}{\DeuxLocFond{W}} \simeq \Localisation{\Cat}{\UnLocFond{W}}$ peut en fait se déduire des résultats figurant dans la thèse de del Hoyo \cite{TheseDelHoyo}. Il suffit en effet pour cela d'utiliser l'analogue de la construction universelle de Bénabou que nous présentons, mais pour les seuls \DeuxFoncteursLax{} \emph{normalisés} de source une \un{}catégorie \cite[définition 7.5.8]{TheseDelHoyo}, et de remplacer la coünité de l'adjonction de Bénabou par son analogue dans ce dernier cadre (voir la démonstration de \cite[proposition 8.6.1]{TheseDelHoyo}). Le lecteur adaptera sans difficulté la démonstration que nous présentons. Il prendra garde au fait que les foncteurs lax de del Hoyo correspondent à ce que nous appelons — suivant en cela l'usage dominant — foncteurs colax normalisés. Del Hoyo démontre donc dans \cite[théorème 9.3.5]{TheseDelHoyo} que la localisation de la catégorie des \deux{}catégories et des foncteurs lax normalisés par les foncteurs lax normalisés qui sont des équivalences faibles est équivalente à la localisation de $\Cat$ par les équivalences faibles, donc à $\Hot$. En somme, on dispose maintenant des modèles catégoriques suivants des types d'homotopie : la catégorie des catégories et foncteurs, la catégorie des \deux{}catégories et \DeuxFoncteursStricts{}, la catégorie des \deux{}catégories et \DeuxFoncteursLax{} (et, bien sûr, celle des \deux{}catégories et \DeuxFoncteursCoLax{}), la catégorie des \deux{}catégories et \DeuxFoncteursLax{} normalisés (et, bien sûr, celle des \deux{}catégories et \DeuxFoncteursCoLax{} normalisés). 
\end{rem}

%
%


\begin{thebibliography}{99}

\bibitem{Ara}
Ara (Dimitri), « Structures de catégorie de modèles à la Thomason sur la catégorie des $2$-catégories strictes ». À paraître dans \emph{Cahiers de topologie et géométrie différentielle catégoriques}.

\bibitem{AraMaltsiniotis}
Ara (Dimitri) \& Maltsiniotis (Georges), « Vers une structure de catégorie de modèles à la Thomason sur la catégorie des $n$-catégories strictes », \emph{Advances in Mathematics}, volume 259, p. 557--654, 2014.   

\bibitem{Bakovic}
Bakovi\'{c} (Igor), « Fibrations of bicategories », prépublication. Disponible à l'adresse http://www.irb.hr/korisnici/ibakovic/groth2fib.pdf.

\bibitem{TheseBenabou}
Bénabou (Jean), \emph{Structures algébriques dans les catégories}, thèse soutenue le 29 mars 1966. Cote à la bibliothèque Mathématiques Informatique recherche de l'Université Paris 7 : Thèse 543. La publication effectuée par les \emph{Cahiers de topologie et géométrie différentielle catégoriques} en 1968 ne reprend pas l'intégralité des résultats de cette thèse.

\bibitem{BettiPower}
Betti (Renato) \& Power (A. John), « On local adjointness of distributive bicategories », \emph{Bollettino della Unione Matematica Italiana}, série VII, volume II-B, numéro 4, p. 931--947, 1988.

\bibitem{BK}
Bousfield (Aldridge) \& Kan (Daniel), \emph{Homotopy Limits, Completions and Localizations}. Collection « Lecture Notes in Mathematics », volume 304, Springer-Verlag, 1972.

\bibitem{Buckley}
Buckley (Mitchell), « Fibred 2-categories and bicategories », \emph{Journal of Pure and Applied Algebra}, volume 218, numéro 6, p. 1034--1074, 2014.

\bibitem{BC}
Bullejos (Manuel) \& Cegarra (Antonio), « On the geometry of $2$\nobreakdash-categories and their classifying spaces », \emph{K-Theory}, volume 29, p. 211--229, 2003. 

\bibitem{Bunge}
Bunge (Marta), « Coherent extensions and relational algebras », \emph{Transactions of the American Mathematical Society}, volume 197, p. 355--390, 1974.

\bibitem{CCH}
Calvo (Mar\'ia), Cegarra (Antonio), Heredia (Benjam\'in), « Bicategorical homotopy fiber sequences », \emph{Journal of Homotopy and Related Structures}, volume 9, p. 125--173, 2014.

\bibitem{CCG}
Carrasco (Pilar), Cegarra (Antonio) \& Garz\'on (Antonio), « Nerves and classifying spaces for bicategories », \emph{Algebraic and Geometric Topology}, volume 10, p. 219--274, 2010.

\bibitem{Cegarra}
Cegarra (Antonio), « Homotopy Fibre Sequences Induced by 2-Functors », \emph{Journal of Pure and Applied Algebra}, volume 215, p. 310--334, 2011. 

\bibitem{TheseMoi}
Chiche (Jonathan), \emph{La théorie de l'homotopie des} \deux{}\emph{catégories}. Thèse de doctorat, Université Paris 7, 2014.

\bibitem{ArticleLocFondMoi}
Chiche (Jonathan), « Théories homotopiques \deux{}catégoriques ». À paraître dans \emph{Cahiers de topologie et géométrie différentielle catégoriques}. 

\bibitem{LFM}
Cisinski (Denis-Charles), « Le localisateur fondamental minimal », \emph{Cahiers de topologie et géométrie différentielle catégoriques}, volume 45, numéro 2, p. 109--140, 2004.  

\bibitem{PMTH}
Cisinski (Denis-Charles), « Les préfaisceaux comme modèles des types d'homotopie », \emph{Astérisque}, volume 308, 2006.

\bibitem{TheseDelHoyo}
Del Hoyo (Matias), \emph{Espacios clasificantes de categorías fibradas}. Thèse, Université de Buenos Aires, 2009. 

\bibitem{NotesDelHoyo}
Del Hoyo (Matias), « The rectification of lax functors and Quillen's Theorem A », notes transmises à l'auteur par courrier électronique le 12 juillet 2011.

\bibitem{ArticleDelHoyo}
Del Hoyo (Matias), « On the loop space of a $2$-category », \emph{Journal of Pure and Applied Algebra}, volume 216, p. 28--40, 2012.

\bibitem{GoerssJardine}
Goerss (Paul G.) \& Jardine (John F.), \emph{Simplicial Homotopy Theory}. Collection « Progress in Mathematics », volume 174, Birkhäuser, 1999. 

\bibitem{Gray}
Gray (John), \emph{Formal Category Theory: Adjointness for 2-Categories}. Collection « Lecture Notes in Mathematics », volume 391, Springer-Verlag, 1970.

\bibitem{Poursuite}
Grothendieck (Alexandre), \emph{Pursuing Stacks}. Tapuscrit, 1983. À paraître dans la collection « Documents mathématiques » de la Société mathématique de France.

\bibitem{Derivateurs}
Grothendieck (Alexandre), 
\emph{Les dérivateurs},
manuscrit, 1990--1991.

\bibitem{LettreGrothendieckThomason}
Grothendieck (Alexandre), lettre à Robert Thomason, 2 avril 1991. Disponible à l'adresse http://www.math.jussieu.fr/$\sim$maltsin/groth/ps/lettreder.pdf.

\bibitem{Hermida}
Hermida (Claudio), « Some properties of Fib as a fibred $2$-category », \emph{Journal of Pure and Applied Algebra}, volume 134, numéro 1, p. 83--109, 1999. 

\bibitem{HPTW}
Hess (Kathryn), Parent (Paul-Émile), Tonks (Andrew) \& Worytkiewicz (Krzysztof), « Simulations as homotopies », \emph{Electronic Notes in Theoretical Computer Science}, volume 100, p. 65--93, 2004.


\bibitem{Illusie}
Illusie (Luc), \emph{Complexe cotangent et déformations}. Collection « Lecture Notes in Mathematics », volumes 239 et 283, Springer-Verlag, 1971 et 1972.

\bibitem{Jay}
Jay (Barry), « Local adjunctions », \emph{Journal of Pure and Applied Algebra}, volume 53, numéro 3, p. 227--238 (1988).



\bibitem{THG}
Maltsiniotis (Georges), « La théorie de l'homotopie de Grothendieck »,
\emph{Astérisque}, volume 301, 2005.

\bibitem{CourrielGeorges}
Maltsiniotis (Georges), courriel à l'auteur, 29 juillet 2012.



\bibitem{QuillenK}
Quillen (Daniel), « Higher algebraic K-theory\string: I », dans \emph{Algebraic K-Theory I\string: Higher K-Theories}, H. Bass éd. Collection « Lecture Notes in Mathematics », volume 341, p. 85--147, Springer, 1973.


\bibitem{StreetAOS}
Street (Ross), « The algebra of oriented simplexes », \emph{Journal of Pure and Applied Algebra}, volume 49, numéro 3, p. 283--335, 1987. 

\bibitem{Thomason}
Thomason (Robert), « $\Cat$ as a closed model category », \emph{Cahiers de topologie et géométrie différentielle catégoriques}, volume 21, numéro 3, p. 305--324, 1980.

\bibitem{WHPT}
Worytkiewicz (Krzysztof), Hess (Kathryn), Parent (Paul-Émile) \& Tonks (Andrew), « A Model Structure à la Thomason on $\DeuxCat$ », \emph{Journal of Pure and Applied Algebra}, volume 208, numéro 1, p. 205--236, 2007.

\end{thebibliography}
\end{document}